
\documentclass[12pt,thmsa]{article}
\usepackage{amssymb}
\usepackage{amsfonts}
\usepackage{enumerate}
\usepackage{amsmath}
\usepackage{cite}
\usepackage[top=3.8cm,bottom=3.8cm,textwidth=16cm,centering,a4paper]{geometry}
\usepackage{tikz}
\usepackage{caption2}
\usepackage{subfigure}
\usepackage{tcolorbox}

\setcounter{MaxMatrixCols}{10}

\setlength{\textheight}{235mm} \setlength{\textwidth}{175mm}
\setlength{\voffset}{-7mm} \setlength{\hoffset}{-9mm}

\newtheorem{theorem}{Theorem}[section]

\newtheorem{proposition}[theorem]{Proposition}
\newtheorem{corollary}[theorem]{Corollary}
\newtheorem{remark}[theorem]{Remark}

\newtheorem{lemma}[theorem]{Lemma}

\newtheorem{definition}[theorem]{Definition}
\newenvironment{proof}[1][Proof]{\noindent\textbf{#1.} }{\ \rule{0.5em}{0.5em}}

\begin{document}

\title{Standing waves with prescribed mass for Schr\"{o}dinger equations with competing Van Der Waals type potentials}
\date{}
\author{Shuai Yao$^{a}$\thanks{%
E-mail address: shyao2019@163.com (S. Yao)}, Hichem Hajaiej$^{b}$\thanks{%
E-mail address: hhajaie@calstatela.edu (H. Hajaiej)}, Juntao Sun$^{a}$%
\thanks{%
E-mail address: jtsun@sdut.edu.cn (J. Sun)} \\
{\footnotesize $^{a}$\emph{School of Mathematics and Statistics, Shandong
University of Technology, Zibo 255049, PR China}}\\
{\footnotesize $^{b}$\emph{Department of Mathematics, California State
University at Los Angeles, Los Angeles, CA 90032, USA}}}
\maketitle

\begin{abstract}
We investigate standing waves with prescribed mass for a class of Schr\"{o}dinger equations with competing Van Der Waals type
potentials, arising in a model of non-relativistic bosonic atoms and molecules. By developing an approach based on a direct minimization of the
energy functional on a new constrained manifold, we establish the existence of two normalized solutions for the corresponding stationary problem. One is a local minimizer with positive level and the other one is a global minimizer with negative level. Moreover, we find that the global minimizer is farther away from the origin than the local minimizer. Finally, we explore the relations between the ground state solution and the least action solution, and some dynamical behavior and scattering results are presented as well.
\end{abstract}

\textbf{Keywords: }Schr\"{o}dinger equation; competing nonlinearities;
variational methods; dynamics.

\textbf{2020 Mathematics Subject Classification:} 35J20, 35J61, 35Q40.

\section{Introduction}

In this paper, we are concerned with the following Schr\"{o}dinger equations
with Van Der Waals type potentials
\begin{equation}
i\partial _{t}\psi +\Delta \psi +(W_{\alpha ,\beta }(x)\ast |\psi |^{2})\psi
=0\text{ in }\mathbb{R\times R}^{N},  \label{e1-6}
\end{equation}%
where $\psi (t,x)$ is the complex valued function in the spacetime $\mathbb{%
R\times R}^{N}$ ($N\geq 3$), and
\begin{equation*}
W_{\alpha ,\beta }(x)=\mu _{\alpha }|x|^{-\alpha }+\mu _{\beta }|x|^{-\beta
}\quad \text{with }\mu _{\alpha },\mu _{\beta }\in \mathbb{R}.
\end{equation*}%
(\ref{e1-6}) comes from a model of non-relativistic bosonic
atoms and molecules, which has two interacting potentials, and this
interaction is weaker and has a longer range than the Dirac delta type
potential, see \cite{FL}. Physically, if $\alpha >1$ and $\beta >1$, $%
W_{\alpha ,\beta }(x)$ is the Van Der Waals type potential, also known as
the Van Der Waals interaction, it represents the potential energy between two
neutral molecules and is a consequence of the attractive and repulsive
forces between the molecules. It is a crucial concept in the study of
intermolecular forces in physics and chemistry, see \cite{DLP,YLT,ZN}. The
Van Der Waals coefficient $\mu _{6}$, $\mu _{8}$ and $\mu _{10}$ associated
with alkaline-earth interactions, which were calculated by Porsev and
Derevianko \cite{PD} through the employment of relativistic many-body
perturbation theory, are regarded as having an accuracy reaching up to $%
1/100 $. Moreover, (\ref{e1-6}) also arises as a mean-field limit of a
bosonic system with attractive two-body interactions which can be taken
rigorously in many cases \cite{LNR}.

Without loss of generality, we may assume that $\mu _{\alpha
}=1$. According to the distributive law of convolution, this leads to the
study of the following nonlocal equation
\begin{equation}
i\partial _{t}\psi +\Delta \psi +(|x|^{-\alpha }\ast |\psi |^{2})\psi +\mu
_{\beta }(|x|^{-\beta }\ast |\psi |^{2})\psi =0\text{ in }\mathbb{R\times R}%
^{N}.  \label{e1-0}
\end{equation}%
For the Cauchy problem to (\ref{e1-0}), when the initial data $\psi _{0}\in
H^{1}(\mathbb{R}^{N})$, we can study the finite Hamiltonian or the finite
energy solutions, which satisfy the energy (Hamiltonian), mass and momentum
conservations:
\begin{equation*}
E[\psi (t)]:=\frac{1}{2}\int_{\mathbb{R}^{N}}|\nabla \psi |^{2}dx-\frac{1}{4}%
\int_{\mathbb{R}^{N}}(|x|^{-\alpha }\ast |\psi |^{2})|\psi |^{2}dx-\frac{\mu
_{\beta }}{4}\int_{\mathbb{R}^{N}}(|x|^{-\beta }\ast |\psi |^{2})|\psi
|^{2}dx=E[\psi _{0}],
\end{equation*}%
\begin{equation*}
M[\psi (t)]:=\int_{\mathbb{R}^{N}}|\psi |^{2}dx=M[\psi _{0}],
\end{equation*}%
\begin{equation*}
P[\psi (t)]:=\mathrm{Im}\int_{\mathbb{R}^{N}}\bar{\psi}\nabla \psi dx=P[\psi
_{0}].
\end{equation*}%
By using standard contraction mapping argument \cite{BGH,C}, we can get the local
well-posedness result.

\begin{theorem}
\label{t1} $(i)$ (The energy subcritical case) Let $N\geq 3$, $\mu_{\beta}\in \mathbb{R}$, $0<\alpha<\beta <\min \left\{ N,4\right\} $ and
the initial data $\psi (0,x)=\psi _{0}(x)\in H^{1}(\mathbb{R}^{N}).$ Then
there exists $T_{max}>0$ and a unique solution $\psi (t,x)\in C([0,T_{\max
});H^{1})$ of the Cauchy problem to (\ref{e1-0}).\newline
$(ii)$ (The energy critical case) Let $N\geq 5$, $\mu_{\beta}\in \mathbb{R}$, $0<\alpha <\beta=4 $ and
the initial data $\psi (0,x)=\psi _{0}(x)\in H^{1}(\mathbb{R}^{N}).$ If $\|\psi _{0}\|_{H^{1}}\leq\eta$ for some small $\eta>0$, then there exists $T_{max}>0$ and a unique solution $\psi (t,x)\in C([0,T_{\max
});H^{1})$ of the Cauchy problem to (\ref{e1-0}).
\end{theorem}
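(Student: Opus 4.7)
The plan is to recast the Cauchy problem as the Duhamel integral equation
\begin{equation*}
\psi(t) = e^{it\Delta}\psi_0 + i\int_0^t e^{i(t-s)\Delta}F(\psi(s))\,ds,\qquad F(\psi) = \bigl(|x|^{-\alpha}*|\psi|^2\bigr)\psi + \mu_\beta \bigl(|x|^{-\beta}*|\psi|^2\bigr)\psi,
\end{equation*}
and to run a Banach fixed-point argument on a closed ball in a Strichartz-type space $Y_T := C([0,T]; H^1(\mathbb{R}^N)) \cap L^q([0,T]; W^{1,r}(\mathbb{R}^N))$ for a carefully chosen Schr\"odinger-admissible pair $(q,r)$. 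By the inhomogeneous Strichartz estimates the whole analysis reduces to controlling the nonlinearity $F(\psi)$ and its spatial gradient in the appropriate dual Strichartz norm on $[0,T]$.

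The core nonlinear estimate comes from combining the Hardy--Littlewood--Sobolev inequality applied separately to each convolution $|x|^{-\gamma}*|\psi|^2$ for $\gamma\in\{\alpha,\beta\}\subset(0,N)$ with the Sobolev embeddings $H^1(\mathbb{R}^N) \hookrightarrow L^{s}(\mathbb{R}^N)$ for $2\leq s\leq 2N/(N-2)$, and finally a H\"older step in time. Because the value $4$ is the energy-critical exponent for the Hartree-type convolution in every dimension $N\geq 3$, the condition $\alpha,\beta<4$ leaves a strictly positive power of $T$ on both cubic terms, and the resulting bound reads
\begin{equation*}
\bigl\|F(\psi)\bigr\|_{\mathrm{dual}} \leq C\bigl(T^{\theta_\alpha} + |\mu_\beta|\,T^{\theta_\beta}\bigr)\|\psi\|_{Y_T}^3, \qquad \theta_\alpha,\theta_\beta>0.
\end{equation*}
Differentiating $F(\psi)$ introduces a single factor of $\nabla\psi$, which is absorbed into the $W^{1,r}$ component of $Y_T$ and is treated by the same chain of inequalities. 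Choosing $T$ small enough makes the Duhamel map a strict contraction on the ball of radius $2C\|\psi_0\|_{H^1}$, which proves part $(i)$.

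For part $(ii)$, the exponent $\beta=4$ is scale-invariant, so the term carrying $\mu_\beta$ loses its time gain and only contributes $C\|\psi\|_{Y_T}^3$ with no $T$-prefactor, while the subcritical term with $\alpha<4$ still supplies a $T^{\theta_\alpha}$ factor and therefore poses no difficulty. To close the contraction one must now absorb the pure critical cubic term into the linear bound, which is possible only when the free-solution Strichartz norm is small; by the Strichartz bound on $e^{it\Delta}\psi_0$ this is precisely the role of the hypothesis $\|\psi_0\|_{H^1}\leq \eta$ for $\eta$ small. Uniqueness and the definition of the maximal time $T_{\max}$ via the usual blow-up alternative then follow from the standard continuation argument. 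The main obstacle is really a bookkeeping one: selecting a single admissible pair $(q,r)$ that simultaneously accommodates both convolution exponents $\alpha$ and $\beta$ while respecting the dimensional constraints $N\geq 3$ in case $(i)$ and $N\geq 5$ in case $(ii)$; once this balancing of exponents is arranged, the remainder of the argument proceeds as in \cite{BGH,C}.
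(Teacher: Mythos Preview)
Your proposal is correct and follows exactly the approach the paper intends: the paper does not give a detailed proof of Theorem~\ref{t1} but simply states that it follows from the ``standard contraction mapping argument \cite{BGH,C}'', which is precisely the Duhamel/Strichartz/Hardy--Littlewood--Sobolev fixed-point scheme you outline. Your treatment of the distinction between the subcritical case (positive time gain $T^{\theta_\gamma}$) and the critical case $\beta=4$ (no time gain, smallness of $\|\psi_0\|_{H^1}$ required) is also the standard one and matches the references cited.
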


An important feature related to the nonlinear time dependent equations such
as (\ref{e1-0}) is to study standing waves. A standing wave of (\ref{e1-0})
is a solution of the form%
\begin{equation*}
\psi (t,x)=e^{i\lambda t}u(x),
\end{equation*}%
where the frequency $\lambda \in \mathbb{R}$ and the real value function $u$
satisfies the stationary equation
\begin{equation}
-\Delta u+\lambda u=(|x|^{-\alpha }\ast \left\vert u\right\vert ^{2})u+\mu
_{\beta }(|x|^{-\beta }\ast \left\vert u\right\vert ^{2})u\quad \text{in}%
\quad \mathbb{R}^{N}.  \label{e1-1}
\end{equation}%
Meanwhile, we note that the mass of the solutions to the Cauchy problem
(\ref{e1-0}) is conserved through time, that is, the $L^{2}$-norms of standing
waves are independent of $t$. Therefore it is interesting to study the existence
of standing waves with prescribed mass, which are usually called \textbf{normalized solutions}%
. In this case $\lambda \in \mathbb{R}$ is unknown and appears as a Lagrange
multiplier. This study seems particularly meaningful from the physical point
of view, since this type of solutions enjoys nice stability properties. In order to
find normalized solutions, we need consider the following problem:%
\begin{equation}
\left\{
\begin{array}{ll}
-\Delta u+\lambda u=(|x|^{-\alpha }\ast \left\vert u\right\vert ^{2})u+\mu
_{\beta }(|x|^{-\beta }\ast \left\vert u\right\vert ^{2})u & \quad \text{in}%
\quad \mathbb{R}^{N}, \\
\int_{\mathbb{R}^{N}}|u|^{2}dx=c, &
\end{array}%
\right.  \label{e1-2}
\end{equation}%
where $c>0$ given. Solutions of problem (\ref{e1-2}) can be obtained as
critical points of the energy functional $E:H^{1}(\mathbb{R}^{N})\rightarrow
\mathbb{R}$ given by%
\begin{equation*}
E(u):=\frac{1}{2}A\left( u\right) -\frac{1}{4}B_{\alpha }\left( u\right) -%
\frac{\mu _{\beta }}{4}B_{\beta }\left( u\right)
\end{equation*}%
on the constraint
\begin{equation*}
S(c):=\left\{ u\in H^{1}(\mathbb{R}^{N}):\int_{\mathbb{R}^{N}}|u|^{2}dx=c%
\right\} ,
\end{equation*}%
where
\begin{equation*}
A\left( u\right) :=\int_{\mathbb{R}^{N}}\left\vert \nabla u\right\vert
^{2}dx,\  B_{\alpha }\left( u\right) :=\int_{\mathbb{R}^{N}}\left(
|x|^{-\alpha }\ast \left\vert u\right\vert ^{2}\right) \left\vert
u\right\vert ^{2}dx\text{ and }B_{\beta }\left( u\right) :=\int_{\mathbb{R}%
^{N}}\left( |x|^{-\beta }\ast \left\vert u\right\vert ^{2}\right) \left\vert
u\right\vert ^{2}dx.
\end{equation*}%
It is easy to show that $E$ is a well-defined and $C^{1}$ functional on $%
S(c) $ for $3\leq N\leq 4$, $0<\alpha ,\beta <\min \left\{ N,4\right\}$ and $N\geq 5$, $0<\alpha ,\beta\leq 4$.

When $\alpha =\beta $, (\ref{e1-1}) becomes the following well-known Hartree
equation
\begin{equation}
-\Delta u+\lambda u=\left( |x|^{-\alpha }\ast |u|^{2}\right) u\quad \text{in}%
\quad \mathbb{R}^{N}.  \label{e1-7}
\end{equation}%
Especially, when $N=3$ and $\alpha =1$, (\ref{e1-7}) appears as a model in
quantum theory of a polaron at rest \cite{P}. The time-dependent form of (%
\ref{e1-7}) also describes the self-gravitational collapse of a quantum
mechanical wave-function \cite{P1}. In such case, (\ref{e1-7}) has been well
studied as a mass subcritical issue. For example, Lieb \cite{L5} proved the
existence and uniqueness of normalized solutions by using symmetrization
techniques, and Lions \cite{L} studied the existence and stability issues of
normalized solutions. While for the mass supercritical case $2<\alpha <\min
\left\{ 4,N\right\} $, by a constrained minimization method, Luo \cite{L1}
obtained a normalized solution for (\ref{e1-7}). Recently, for
the Choquard equation
\begin{equation*}
-\Delta u+\lambda u=\left( |x|^{-\alpha }\ast |u|^{p}\right) |u|^{p-2}u\quad
\text{in}\quad \mathbb{R}^{N},
\end{equation*}%
Li and Ye \cite{LY} showed that if $N\geq 3$, $0<\alpha <N$ and $\frac{%
2N-\alpha +2}{N}<p<\frac{2N-\alpha }{N-2}$, then there exists a
mountain-pass normalized solution for each $c>0.$ We refer the reader to \cite{CFHM,H1,HPS,HS1,HS2,HS3,S3,S4,YCRS} and the references therein, which discuss the normalized solution and stability for similar or more general equations.

When $\alpha \neq \beta $, the situation becomes much more complicated. In
order to better investigate the number and properties of normalized
solutions to problem (\ref{e1-2}), we can present the map to help
to understand the geometry of $E|_{S(c)}$. Such map is known as fibering map and was introduced in \cite{S1,S2,T}. Specifically, for each $u\in S(c)$
and $s>0$, we set the dilations
\begin{equation}
u_{s}\left( x\right) :=s^{N/2}u\left( sx\right) \text{ for all }x\in \mathbb{%
R}^{N}.  \label{e1-4}
\end{equation}%
Define the fibering map $s\in \left( 0,\infty \right) \mapsto g_{u}\left(
s\right) :=E\left( u_{s}\right) $ given by
\begin{equation}
g_{u}\left( s\right) =\frac{s^{2}}{2}A\left( u\right) -\frac{s^{\alpha }}{4}%
B_{\alpha }\left( u\right) -\frac{\mu _{\beta }s^{\beta }}{4}B_{\beta
}\left( u\right) .  \label{e1-5}
\end{equation}%
Moreover, we also define the Pohozaev manifold%
\begin{equation}
\mathcal{P}\left( c\right) :=\left\{ u\in S(c):Q\left( u\right) =0\right\}
=\left\{ u\in S(c):g_{u}^{\prime }\left( 1\right) =0\right\} ,  \label{e1-10}
\end{equation}%
where $Q\left( u\right) =0$ is the corresponding Pohozaev type identity of (%
\ref{e1-1}). It is well-known that any critical point of $E|_{S(c)}$ stays
in $\mathcal{P}\left( c\right) $. Notice that the critical points of $g_{u}$
allow to project a function on $\mathcal{P}\left( c\right) .$ Thus,
the monotonicity and the convexity properties of $g_{u}$ strongly affect the
structure of $\mathcal{P}\left( c\right) $ (and in turn the geometry of $%
E|_{S(c)}$), and also have a strong impact on properties of the
time-dependent equation (\ref{e1-0}).

In light of the variances of the $\alpha ,\beta $ and the sign of $\mu
_{\beta }$, the properties of fibering map $g_{u}$ can be summarized into the following
four cases:\newline
\fbox{\textbf{Case}\boldmath{ $(i)$:}} $\mu _{\beta }>0$ and $0<\alpha <\beta \leq2,$ or $\mu _{\beta }<0$
and $0<\beta <\alpha <2$. The fibering map $g_{u}$ has a unique critical
point which is a global minimum, see Figure \ref{F1}. For this case, the
energy functional $E$ is bounded from below on $S(c)$. In \cite{BGH, CJL},
via using the concentration-compactness principle, the normalized solution of problem (\ref{e1-2}) was obtained as the global minimizer
related to the minimization problem on $S(c)$.\newline
\fbox{\textbf{Case}\boldmath{ $(ii)$:}} $\mu _{\beta }>0$ and $0<\alpha <2<\beta \leq \min \left\{
N,4\right\} $. The fibering map $g_{u}$ has exactly two critical points: a
local minimum at $s_{1}=s_{1}(u)$ and a global maximum at $s_{2}=s_{2}(u)$.
Moreover, $g_{u}$ is decreasing in $(0,s_{1})\cup (s_{2},+\infty )$ and
increasing in $(s_{1},s_{2}),$ see Figure \ref{F2}. For this case, the
energy functional $E$ is bounded from below on $\mathcal{P}\left( c\right) $%
, but not on $S(c)$, and $\mathcal{P}\left( c\right) $ is a natural
constraint. In \cite{CJL}, by using the decomposition of $\mathcal{P}\left(
c\right) $ and the min-max principle by Ghoussoub \cite{G1}, two normalized
solutions of problem (\ref{e1-2}) were found for $\beta <\min \left\{
N,4\right\} $ and $N\geq 3$. Later, Jia and Luo \cite{JL} extended main
results in \cite{CJL} to the energy critical case, i.e. $\beta =4$ and $%
N\geq 5.$\newline
\fbox{\textbf{Case}\boldmath{ $(iii)$:}} $\mu _{\beta }>0$ and $2\leq\alpha <\beta <\min \left\{N,4\right\} $%
, or $\mu _{\beta }<0$ and $\max\{\beta,2\} <\alpha <\min \left\{N,4\right\} $. The
fibering map $g_{u}$ has a unique critical point which is a global maximum,
see Figure \ref{F3}. For this case, the energy functional $E$ is bounded
from below on $\mathcal{P}\left( c\right) $, but not on $S(c)$, and $%
\mathcal{P}\left( c\right) $ is a natural constraint. In \cite{BGH,CJL}, via
using the classical Pohozaev manifold method, the existence of normalized
solution of problem (\ref{e1-2}) was proved.\newline
\fbox{\textbf{Case}\boldmath{ $(iv)$:}} $\mu _{\beta }<0$ and $2<\alpha <\beta \leq\min \left\{N,4\right\} $%
. The fibering map $g_{u}$ has exactly two critical points: a local maximum
at $s_{3}=s_{3}(u)$ and a global minimum at $s_{4}=s_{4}(u)$. Moreover, $%
g_{u}$ is increasing in $(0,s_{3})\cup (s_{4},+\infty )$ and decreasing in $%
(s_{3},s_{4}),$ see Figure \ref{F4}. For this case, although the graph of $%
g_{u}$ is clear, it is not easy to prove that the energy functional $E$ is
bounded from below on $S(c)$ because of the complicated competing effect of the
nonlocal terms. Moreover, $\mathcal{P}\left( c\right) $ is no longer a
natural constraint, since we can not guarantee that an arbitrary $u\in S(c)$
can always be projected onto $\mathcal{P}(c)$ for $c>0$. Thus it is totally
different from other cases. So far there are no any results in existing
literature and some interesting problems are open.

\begin{figure*}[t]
\centering%
\subfigure[]{
		\begin{minipage}[b]{0.45\linewidth}
			\centering
			\begin{tikzpicture}[line width=0.8pt,scale=0.6]
				\draw[->] (-1,0) -- (4,0) node[right] {$s$};
				\draw[->] (0,-1.8) -- (0,3.2) node[above] {$g_{u}(s)$};
				\draw[domain=0:2.8, smooth, variable=\x, thick, red] plot ({\x},{\x^2 - 2*\x});
				\node[below left] at (0,0) {$0$};
			\end{tikzpicture}
			\label{F1}
		\end{minipage}} \hfill
\subfigure[]{
		\begin{minipage}[b]{0.45\linewidth}
			\centering
			\begin{tikzpicture}[line width=0.8pt,scale=0.6]
					\draw[->] (-1,0) -- (4,0) node[right] {$s$};
				\draw[->] (0,-2.5) -- (0,2.5) node[above] {$g_{u}(s)$};
				\draw[domain=0:2.8, smooth, variable=\x, thick, red] plot ({\x},{-1.8*sin(142*\x )});
				\node[below left] at (0,0) {$0$};
				\node[right] at (3.5,2.5) { };
			\end{tikzpicture}
			\label{F2}
		\end{minipage}}
\subfigure[]{
	\begin{minipage}[t]{0.45\textwidth}
		\centering
		\begin{tikzpicture}[line width=0.8pt,scale=0.6]
			\draw[->] (-1,0) -- (4,0) node[right] {$s$};
		\draw[->] (0,-2.8) -- (0,2.2) node[above] {$g_{u}(s)$};
		\draw[domain=0:2.8, smooth, variable=\x, thick, red] plot ({\x},{-\x^2 + 2*\x});
		\node[below left] at (0,0) {$0$};
		\node[right] at (3.5,2.5) { };
		\end{tikzpicture}
	\label{F3}
	\end{minipage}} \hfill
\subfigure[]{
	\begin{minipage}[t]{0.45\textwidth}
		\centering
		\begin{tikzpicture}[line width=0.8pt,scale=0.6]
			\draw[->] (-1,0) -- (4,0) node[right] {$s$};
		\draw[->] (0,-2.5) -- (0,2.5) node[above] {$g_{u}(s)$};
		\draw[domain=0:2.8, smooth, variable=\x, thick, red] plot ({\x},{1.8*sin(142*\x )});
		\node[below left] at (0,0) {$0$};
		\node[right] at (3.5,2.5) { };
		\end{tikzpicture}
	\label{F4}
	\end{minipage}}
\caption{Possible forms of fibering maps}
\end{figure*}
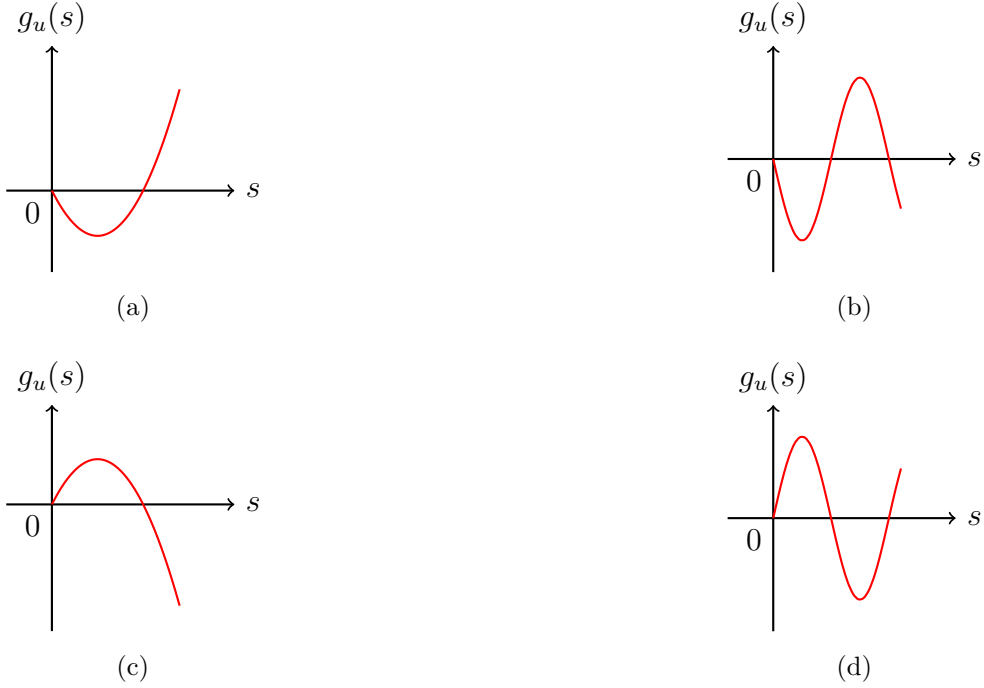

In this paper, we focus on the unsolved case, i.e. Case $(iv)$. Our aim is
to develop a method that will enable us to establish more accurate results on the
multiplicity and dynamics of standing waves with a prescribed mass for (\ref%
{e1-0}). More precisely, we firstly study the existence of two normalized
solutions to (\ref{e1-2}). As a result, the corresponding standing waves
with prescribed mass of (\ref{e1-0}) are obtained. We note that there are
some essential differences in the geometry of the corresponding energy
functionals $E$ between Case $(ii)$ and Case $(iv)$ from the variational
point of view, although they have the same number of normalized solutions.
Secondly, we study the relations between the ground state solution and the least action solution based on a rigorous analysis. It is worth emphasizing that one has
to face more challenges, such as previously mentioned aspects. In order to
overcome considerable difficulties, new ideas and techniques have been
explored. More details will be discussed in the next subsection.

\subsection{Main results}

\begin{definition}
We say that a solution $u\in S(c)$ of equation (\ref{e1-1}) is a ground state if it possesses the minimal energy among all solutions in $S(c)$, i.e., if
\begin{equation*}
E(u)=\inf\left\{E(v):v\in S(c),(E|_{S(c)})^{\prime}(v)=0\right\}.
\end{equation*}
\end{definition}

Let
\begin{equation*}
\mu _{\ast }:=\frac{4(\alpha -2)}{\alpha (\beta -2)\mathcal{S}_{\beta }c^{%
\frac{4-\beta }{2}}}\left( \frac{\beta -\alpha }{\beta -2}\right) ^{\frac{%
\beta -2}{\alpha -2}}\left[ \frac{\beta (\alpha -2)^{2}}{2\alpha ^{2}(\beta
-2)m_{\infty }}\right] ^{\frac{\beta -2}{2}},  \label{e1-8}
\end{equation*}%
where $\mathcal{S}_{\beta }>0$ is as the best constant of Gagliardo-Nirenberg inequality in Lemma \ref{L2-1} below, and $m_{\infty }>0$ is the ground state energy of the following problem:%
\begin{equation*}
\left\{
\begin{array}{ll}
-\Delta u+\lambda u=\left( |x|^{-\alpha }\ast u^{2}\right) u & \text{ in }%
\mathbb{R}^{N}, \\
\int_{\mathbb{R}^{N}}|u|^{2}dx=c>0, &
\end{array}%
\right.\tag{$HE_{\infty }$}
\end{equation*}%
where $N\geq 3$ and $2<\alpha <\min \left\{ N,4\right\}$.

\begin{theorem}
\label{t2} Assume that $\mu_{\beta}<0$ and one of the two following conditions hold:\newline
$(i)$ $N\geq 3$ and $2<\alpha <\beta <\min \left\{ N,4\right\};$\newline
$(ii)$ $N\geq 5$ and $2<\alpha<\beta=4$.\newline
Then for
\begin{equation*}
0<|\mu _{\beta }|<\min \left\{ \mu _{\ast },\frac{\alpha (\beta -2)}{\beta
(\beta -\alpha )}\left(
\frac{2}{\alpha }\right) ^{\frac{\beta -2}{\alpha -2}}\left( \frac{\alpha (\beta -2)}{\beta (\alpha -2)}\right) ^{%
\frac{\beta -2}{2}}\mu _{\ast }\right\} ,
\end{equation*}%
problem (\ref{e1-2}) has two normalized solutions $u^{-},\tilde{u}\in H^{1}(%
\mathbb{R}^{N})$ satisfying
\begin{equation*}
E(\tilde{u})<0<E(u^{-})<\frac{\alpha (\beta -2)^{2}m_{\infty }}{\beta
^{2}(\alpha -2)}\left[ \frac{\alpha (\beta -2)}{\beta (\beta -\alpha )}%
\right] ^{2/(\beta -2)}
\end{equation*}%
and
\begin{equation*}
\Vert \nabla u^{-}\Vert _{2}^{2}<\frac{2\alpha ^{2}(\beta -2)m_{\infty }}{%
\beta (\alpha -2)^{2}}\left[ \frac{\alpha (\beta -2)}{\beta (\beta -\alpha )}%
\right] ^{2/(\beta -2)}<\Vert \nabla \tilde{u}\Vert _{2}^{2}.
\end{equation*}%
In particular, the sign of the corresponding Lagrange multiplier $\lambda$ is positive, the solution $\tilde{u}$ is a ground state, and $\|\nabla \tilde{u}\|_{2}\rightarrow+\infty$ as $|\mu _{\beta }|\rightarrow 0^{+}$.
\end{theorem}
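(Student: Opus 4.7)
The plan is to organize the variational analysis around the two-critical-point structure of the fibering map $g_u$ in Case $(iv)$. Differentiating \eqref{e1-5} gives
\[
g_u'(s) = sA(u) - \tfrac{\alpha}{4}s^{\alpha-1}B_\alpha(u) - \tfrac{\beta \mu_\beta}{4} s^{\beta-1}B_\beta(u),
\]
and since $\mu_\beta<0$ and $2<\alpha<\beta$, the equation $g_u'(s)=0$ has exactly two positive roots $s_3(u)<s_4(u)$ precisely when $A(u)$ is strictly smaller than the maximum of $s\mapsto \tfrac{\alpha}{4}s^{\alpha-2}B_\alpha(u)+\tfrac{\beta\mu_\beta}{4}s^{\beta-2}B_\beta(u)$. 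I would accordingly decompose the Pohozaev manifold as
\[
\mathcal{P}^{\pm}(c) := \{u\in\mathcal{P}(c):\pm g_u''(1)>0\},
\]
so that $u^-$ lies on the fibre of a local maximum ($s_3$) and $\tilde u$ on the fibre of a global minimum ($s_4$). Combining $Q(u)=0$ with the second-order conditions, the sharp Gagliardo--Nirenberg inequality (Lemma \ref{L2-1}), and the smallness of $|\mu_\beta|$, I would show that the explicit gradient threshold $T>0$ appearing in the statement separates the two pieces: $\|\nabla u\|_2^2<T$ on $\mathcal{P}^-(c)$ and $\|\nabla u\|_2^2>T$ on $\mathcal{P}^+(c)$. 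In particular the two sets are disjoint and each is a $C^1$ submanifold of $S(c)$ because $g_u''(1)\neq 0$ there.

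Next I would carry out two direct minimizations,
\[
m^-(c):=\inf_{u\in\mathcal{P}^-(c)}E(u),\qquad m^+(c):=\inf_{u\in\mathcal{P}^+(c)}E(u),
\]
and identify $u^-$ and $\tilde u$ with minimizers at these levels. The signs $m^+(c)<0<m^-(c)$ are immediate from the qualitative picture of $g_u$ in Figure \ref{F4}, since $g_u(s_4(u))<0<g_u(s_3(u))$. The quantitative upper bound for $m^-(c)$ is obtained by projecting (via the dilation \eqref{e1-4}) a normalized ground state of the limiting Hartree problem $(HE_\infty)$ onto $\mathcal{P}^-(c)$: the smallness of $|\mu_\beta|$ ensures that the projection is well-defined and that its energy stays close to $m_\infty$, from which the stated bound emerges. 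For $m^+(c)$, combining $Q(u)=0$ with Gagliardo--Nirenberg produces the lower bound $\|\nabla\tilde u\|_2^2>T$ and places $\tilde u$ strictly above the threshold.

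The main obstacle is compactness, since the paper has already flagged that $\mathcal{P}(c)$ fails to be a natural constraint in Case $(iv)$ and $E$ need not be coercive on $S(c)$. I would address this through an Ekeland-type procedure producing Palais--Smale sequences $\{u_n^{\pm}\}$ at the levels $m^\pm(c)$ with the additional property $Q(u_n^{\pm})\to 0$, and then use the threshold $T$ to confine each sequence to the correct component and, in particular, keep it bounded in $H^1$. A concentration--compactness argument of Lions type, supplemented by a Brezis--Lieb splitting adapted to the nonlocal Riesz terms, eliminates vanishing (from the positive lower bound on $B_\alpha(u_n^{\pm})$ forced by $Q(u_n^{\pm})\to 0$) and dichotomy (through a strict subadditivity $m^+(c)<m^+(c_1)+m^+(c-c_1)$ for the global minimum and the comparison $m^-(c)<m_\infty$ obtained via $(HE_\infty)$). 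Once $\tilde u\in\mathcal{P}^+(c)$ and $u^-\in\mathcal{P}^-(c)$ are secured, pairing the equation with $u$ and eliminating $A(u)$ via $Q(u)=0$ yields
\[
\lambda c=\tfrac{4-\alpha}{4}B_\alpha(u)-\tfrac{(4-\beta)|\mu_\beta|}{4}B_\beta(u),
\]
which is strictly positive for $|\mu_\beta|$ small (and automatic when $\beta=4$). That $\tilde u$ is a ground state follows because every critical point of $E|_{S(c)}$ must lie in $\mathcal{P}^-(c)\cup\mathcal{P}^+(c)$ by the two-critical-point picture, and the strict inequality $m^+(c)<0<m^-(c)$ singles out $\tilde u$ as the one of minimal energy. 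Finally, $\|\nabla\tilde u\|_2\to\infty$ as $|\mu_\beta|\to 0^+$ is read off the Pohozaev identity $A(\tilde u)=\tfrac{\alpha}{4}B_\alpha(\tilde u)+\tfrac{\beta\mu_\beta}{4}B_\beta(\tilde u)$ together with the location $s_4(u)\to\infty$ of the global minimum as the stabilizing $\beta$-term vanishes.
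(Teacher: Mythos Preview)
Your overall architecture is reasonable, but there is a genuine gap in the treatment of $u^-$. The claim that the gradient threshold $T$ separates $\mathcal{P}^-(c)$ from $\mathcal{P}^+(c)$ cannot be obtained from $Q(u)=0$, the sign of $g_u''(1)$, and Gagliardo--Nirenberg alone: the second-order condition $g_u''(1)<0$ reads $(\alpha-2)A(u)>\tfrac{|\mu_\beta|\beta(\beta-\alpha)}{4}B_\beta(u)$, and since Gagliardo--Nirenberg only gives an \emph{upper} bound on $B_\beta(u)$, no upper bound on $A(u)$ follows. In fact $E$ is coercive on $\mathcal{P}^-(c)$ (see Remark~\ref{R4-1}), so $A(u)$ is unbounded there; the inequality $\|\nabla u^-\|_2^2<T$ in the theorem comes only a posteriori from the energy upper bound $m(c)<\frac{\alpha(\beta-2)^2 m_\infty}{\beta^2(\alpha-2)}[\cdots]$, not from membership in $\mathcal{P}^-(c)$. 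This matters for compactness: the fibering map $g_u$ need not have two critical points for an arbitrary $u\in S(c)$ (only for those satisfying a quantitative condition of the type $B_\alpha(u)>\Gamma A(u)^{\alpha/2}$), so after passing to a weak limit of a minimizing or Palais--Smale sequence you cannot simply ``project back'' onto $\mathcal{P}^-(c)$. The paper resolves this by restricting from the outset to the open set $\mathcal{V}_D=\{u\in D(c):\Gamma A(u)^{\alpha/2}<B_\alpha(u)\}$ with an explicit $\Gamma$ tied to $|\mu_\beta|$, proving projection exists there (Lemma~\ref{L4-3}), and then showing via profile decomposition and the smallness of $|\mu_\beta|$ that the relevant weak limit stays in $\mathcal{V}_D$ (Lemma~\ref{L4-6}). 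Your Ekeland/PS scheme would have to replicate this mechanism somehow, and the proposal does not indicate how.

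For $\tilde u$ your route also differs from the paper's and skips a step that is not automatic. The paper does \emph{not} minimize on $\mathcal{P}^+(c)$; it minimizes $E$ directly on $S(c)$, which first requires proving that $E$ is bounded below on $S(c)$ despite the competing nonlocal terms. This is done by an interpolation trick (Lemma~\ref{L5-0}): H\"older between the $\alpha$- and $\beta$-kernels followed by Young's inequality yields $B_\alpha(u)\le \varepsilon B_\beta(u)+C_\varepsilon B_1(u)$, after which the defocusing $B_\beta$ absorbs the first piece and Gagliardo--Nirenberg with exponent $1$ controls the rest. Strict subadditivity is then obtained via the dilation $v(x)=u(z^{-1/N}x)$ rather than the fibering scaling. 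Your alternative of minimizing on $\mathcal{P}^+(c)$ is plausible but would still need to verify that $\mathcal{P}^+(c)$ is a natural constraint (the paper only checks this for $\mathcal{P}_S^-(c)$ in Lemma~\ref{L4-14}). Finally, for $\lambda>0$ the paper rewrites $\lambda c=\tfrac{4-\alpha}{\alpha}A(u)+\tfrac{|\mu_\beta|(\beta-\alpha)}{\alpha}B_\beta(u)$, which is manifestly positive for all admissible $|\mu_\beta|$, whereas your expression $\lambda c=\tfrac{4-\alpha}{4}B_\alpha(u)-\tfrac{(4-\beta)|\mu_\beta|}{4}B_\beta(u)$ still requires an extra argument when $\beta<4$.
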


\begin{remark}
\label{r3}$(i)$ Notice that the solution $\tilde{u}$ with negative level is
farther away from the origin than the solution $u^{-}$ with positive level
in Theorem \ref{t2} while the opposite is true in \cite[Theorem 1.2]{CJL}. Meanwhile, it should be noted that as the Van Der Waals coefficient $\mu _{\beta}$ tends to zero, the ground state solution will disappear. Since $\mu _{\beta}<0$, the symmetry of the solution cannot be directly obtained by the method of symmetric rearrangement, which is an open problem. We conjecture that if the solution $u^{-}$ is radially symmetric, then, as $\mu _{\beta}$ approaches zero, the solution $u^{-}$ will converge to the ground state of problem $(HE_{\infty })$, and $E(u^{-})$ will converge to ground state energy $m_{\infty}$.
\newline
$(ii)$ For $N\geq 5$ and $2<\alpha<\beta=4$, the parameter $\mu _{\ast }$ becomes the following more precise form
\begin{equation*}
\mu _{\ast }=\frac{2(\alpha-2)^{3}}{\alpha^{3}\mathcal{S}_{4}m_{\infty }}\left( \frac{4-\alpha }{2}\right) ^{\frac{2}{\alpha-2}}.  \label{e1-8}
\end{equation*}
According to \cite{L1}, we are aware that the ground state energy mapping $c\mapsto m_{\infty }$ is strictly decreasing, one interesting finding is that if the mass is considered as a parameter, then Theorem \ref{t2} holds for large mass $c$.\newline
$(iii)$ If the response function is a Dirac-delta function, i.e. $W_{\alpha
,\beta }(x)=\delta (x)$, then (\ref{e1-2}) is reduced to the following NLS
equation with combined power-type nonlinearities:%
\begin{equation}
\left\{
\begin{array}{ll}
-\Delta u+\lambda u=\mu _{1}|u|^{p-2}u+\mu _{2}|u|^{q-2}u & \quad \text{in}%
\quad \mathbb{R}^{N}, \\
\int_{\mathbb{R}^{N}}|u|^{2}dx=c. &
\end{array}%
\right.  \label{e1-9}
\end{equation}%
When $2<p\leq 2+\frac{4}{N}\leq q<2^{\ast }:=\frac{2N}{N-2}$ with $p\neq q,$ $\mu _{1}\in
\mathbb{R}$ and $\mu _{2}=1$, the existence and stability/instability of
normalized ground states for (\ref{e1-9}) were studied by Soave \cite{S1}.
When $2<p<q=2^{\ast },$ $\mu _{1}>0$ and $\mu _{2}=1,$ the existence,
multiplicity and stability/instability of normalized solutions for (\ref%
{e1-9}) were proved in \cite{JL0,S2,WW}, depending on the parameter $\mu
_{1}, $ which can be viewed as a counterpart of the Br\'{e}zis-Nirenberg
problem in the context of normalized solutions. When $2+\frac{4}{N}%
<p<q\leq2^{\ast },$ $\mu _{1}=1$ and $\mu _{2}<0,$ similar to Theorem \ref{t2},
we can conclude that (\ref{e1-9}) admits two normalized solutions for $|\mu
_{2}|\ $small enough, which is viewed as a complement in \cite{JL0,S1,S2,WW}.
\end{remark}

In Theorem \ref{t2}, in order to find the solution $u^{-}$ with positive
level, \textbf{we propose a novel approach based on the direct minimization of the
energy functional $E$ on the following new constrained manifold}
\begin{equation*}
\mathcal{P}_{D}^{-}\left( c\right) :=\left\{ u\in \mathcal{V}%
_{D}:g_{u}^{\prime }\left( 1\right) =0,g_{u}^{\prime \prime }\left( 1\right)
<0\right\},
\end{equation*}
where
\begin{equation*}
\mathcal{V}_{D}:=\left\{ u\in D(c):\Gamma A(u)^{\frac{\alpha }{2}}<B_{\alpha
}(u)\right\}
\end{equation*}%
with $\Gamma >0$ a constant and
\begin{equation}
D(c):=\left\{ u\in H^{1}(\mathbb{R}^{N}):\int_{\mathbb{R}^{N}}|u|^{2}dx\leq
c\right\} .  \label{e1-3}
\end{equation}
This approach can be viewed as an integration of the approaches in \cite{BM,CJ}.
The advantage of the new constrained manifold is that we can guarantee that an arbitrary $u\in \mathcal{V}_{D}$ can always be
projected onto $\mathcal{P}_{D}^{-}\left( c\right) .$ Furthermore, we do not
work in the radial functions space $H_{r}^{1}(\mathbb{R}^{N})$ and do not
need to consider Palais-Smale sequences, so that we avoid applying the relatively complex
mini-max approach based on a strong topological argument as in \cite%
{BS,CJ,G1,YHSW}. We also avoid excluding the possibility that the critical
point falls on the boundary of the open subset \cite{CJ,YHSW}. To recover
the compactness, a novel strategy is to take advantage of the profile
decomposition of the minimizing sequence and the precise parameter control.
It is worth noting that the mountain pass theorem developed by Jeanjean and
Lu \cite{JL4} can be also used to find the solution with positive level for
Case $(iv)$. However, the energy functional $E$ need to be restricted in $%
H_{r}^{1}(\mathbb{R}^{N}),$ not in $H^{1}(\mathbb{R}^{N})$, and the mass $c$
need to be large.

The second solution $\tilde{u}$ with negative level is
obtained as a global minimizer in Theorem \ref{t2}. The method is mainly based on the
concentration-compactness lemma. We firstly establish a new inequality
estimate to prove that the energy functional $E$ is bounded from below on $%
S(c)$, and then use the fibering map method to prove that the infimum is
negative, which can help us to rule out the vanishing of the minimizing
sequence. Finally, we introduce a new test function of the dilation
transformation, together with the method in \cite{JL5}, to establish the
strict subadditive inequality to exclude the dichotomy of the minimizing
sequence, so that the compactness can be achieved.

Next, we show the relations between the ground state solution and the least action solution.

\begin{definition}
Given $\lambda\in\mathbb{R}$, nontrivial solution $u\in H^{1}(\mathbb{R}^{N})$ to equation (\ref{e1-1}) is called an least action solution if it achieves the infimum of the action functional
\begin{equation*}
E_{\lambda}(u):=E(u)+\frac{\lambda}{2}\int_{\mathbb{R}^{N}}
|u|^{2}dx
\end{equation*}
among all the nontrivial solutions, that is
\begin{equation*}
E_{\lambda}(u)=m_{\lambda}:=\inf\left\{E_{\lambda}(v):v\in H^{1}(\mathbb{R}^{N})\backslash\left\{0\right\},
E_{\lambda}^{\prime}(v)=0\right\}.
\end{equation*}
\end{definition}

\begin{theorem}\label{t4}
Let $\lambda(u)$ be the Lagrange
multiplier corresponding to an arbitrary minimizer $u$ of problem (\ref{e1-2}). Under the assumptions of Theorem \ref{t2}, then the following statements hold.\newline
$(i)$ Any minimizer $u\in S(c)$ of problem (\ref{e1-2}) is a least action solution of (\ref{e1-1}) with $\lambda=\lambda(u)>0$. Moreover,
\begin{equation*}
m_{\lambda}=\sigma(c)+\frac{\lambda c}{2}
\end{equation*}
where
\begin{equation*}
\sigma(c):=\inf_{S(c)}E(u).
\end{equation*}
$(ii)$ For given $\lambda\in \left\{\lambda(u):u\in S(c) \text{is a minimizer of problem (\ref{e1-2})}\right\}$, any least action solution $u_{\lambda}\in H^{1}(\mathbb{R}^{N})$ of (\ref{e1-1}) is a minimizer of problem (\ref{e1-2}), that is
\begin{equation*}
\|u_{\lambda}\|_{2}^{2}=c\,\text{ and } E(u_{\lambda})=\sigma(c).
\end{equation*}
\end{theorem}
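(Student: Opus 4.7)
The plan is to reduce both parts of Theorem \ref{t4} to a single amplitude-scaling argument in the spirit of the Nehari method. Let $u\in S(c)$ be a minimizer of $E|_{S(c)}$ and write $\lambda:=\lambda(u)$ for its Lagrange multiplier; Theorem \ref{t2} guarantees $\lambda>0$. Since $u$ is a nontrivial critical point of $E_{\lambda}$, the easy half of (i) reads $E_{\lambda}(u)=\sigma(c)+\tfrac{\lambda c}{2}\geq m_{\lambda}$, so the real task is the reverse bound $m_{\lambda}\geq \sigma(c)+\tfrac{\lambda c}{2}$.

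The heart of the argument is the following scaling identity. For any nontrivial solution $w$ of (\ref{e1-1}) at parameter $\lambda$, let $c_w:=\|w\|_2^2>0$ and set $\phi(\theta):=E_{\lambda}(\theta w)$. Using that both Hartree terms are homogeneous of degree $4$ in $u$, one obtains
\begin{equation*}
\phi(\theta)=\tfrac{\theta^{2}}{2}\bigl(A(w)+\lambda c_{w}\bigr)-\tfrac{\theta^{4}}{4}\bigl(B_{\alpha}(w)+\mu_{\beta}B_{\beta}(w)\bigr).
\end{equation*}
The Nehari identity $\langle E_{\lambda}'(w),w\rangle=0$ collapses the two coefficients into a single positive quantity $K:=A(w)+\lambda c_{w}=B_{\alpha}(w)+\mu_{\beta}B_{\beta}(w)>0$ (positivity is immediate from $\lambda>0$ and $A(w)>0$). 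Therefore $\phi(\theta)=\tfrac{K}{4}(2\theta^{2}-\theta^{4})$, whose \emph{unique} global maximum on $(0,\infty)$ is attained at $\theta=1$ with value $\phi(1)=E_{\lambda}(w)$.

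Now choose $\theta_{0}:=\sqrt{c/c_{w}}$, so that $\theta_{0}w\in S(c)$. Then
\begin{equation*}
E_{\lambda}(w)=\phi(1)\geq\phi(\theta_{0})=E(\theta_{0}w)+\tfrac{\lambda c}{2}\geq \sigma(c)+\tfrac{\lambda c}{2}=E_{\lambda}(u),
\end{equation*}
and passing to the infimum over nontrivial solutions yields $m_{\lambda}\geq E_{\lambda}(u)$, closing (i). For (ii), let $u_{\lambda}$ be any least action solution at parameter $\lambda$, so $E_{\lambda}(u_{\lambda})=m_{\lambda}=E_{\lambda}(u)$. Applying the chain to $w=u_{\lambda}$, equality must hold at each step: the equality $\phi(1)=\phi(\theta_{0})$ combined with uniqueness of the maximum of $\phi$ forces $\theta_{0}=1$, i.e.\ $\|u_{\lambda}\|_{2}^{2}=c$, and the subsequent equality then gives $E(u_{\lambda})=\sigma(c)$, so that $u_{\lambda}$ is a minimizer of problem (\ref{e1-2}).

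The main obstacle is recognizing the correct test scaling. The amplitude dilation $\theta w$ works cleanly here precisely because both convolution nonlinearities share the same homogeneity degree $4$, which is what merges their coefficients under Nehari and turns $\phi$ into an explicit quartic polynomial with a unique maximum. Beyond this algebraic coincidence, the only analytic input is the positivity $\lambda>0$, already established in Theorem \ref{t2}; everything else is bookkeeping of the equality cases.
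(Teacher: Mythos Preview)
Your proof is correct and is in fact more elementary than the paper's. The paper proceeds via a general mountain-pass path construction (Proposition~\ref{p3-1}, borrowed from Jeanjean--Lu \cite{JL5}): for any nontrivial critical point $u_{\lambda}$ of $E_{\lambda}$ one builds a continuous path $\kappa:[0,T]\to H^{1}$ with $\kappa(0)=0$, $\max_{t}E_{\lambda}(\kappa(t))=E_{\lambda}(u_{\lambda})$, and $t\mapsto\|\kappa(t)\|_{2}^{2}$ strictly increasing; the intermediate value theorem then produces a point on $S(c)$, yielding $E_{\lambda}(u_{\lambda})\geq\sigma(c)+\tfrac{\lambda c}{2}$, and for part~(ii) a strictness property of the path forces $\|u_{\lambda}\|_{2}^{2}=c$. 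Your amplitude scaling $\theta\mapsto\theta w$ is precisely such a path, but you bypass the abstract proposition entirely by computing $\phi(\theta)=\tfrac{K}{4}(2\theta^{2}-\theta^{4})$ in closed form, which is possible here only because both Hartree terms share homogeneity degree~$4$. The uniqueness of the maximum of this explicit quartic replaces the path's strictness property in part~(ii). So the two arguments are the same in spirit, but yours trades generality (the Jeanjean--Lu path works for a broader class of nonlinearities) for transparency (no black-boxed proposition, just a one-variable polynomial).
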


Finally, we turn to study the dynamics of standing waves for the Cauchy problem
to (\ref{e1-0}).

\begin{theorem}
\label{t3} Assume that $\mu_{\beta}<0$, $N\geq 3$ and $2<\alpha <\beta <\min \left\{ N,4\right\}$. Then there exists a unique global solution of the Cauchy problem to (\ref{e1-0}) with the initial data $\psi _{0}\in H^{1}(\mathbb{R}^{N})$. What is more, under the assumptions of Theorem \ref{t2}, the set of ground
state
\begin{equation*}
\mathcal{M}_{c}:=\left\{ u\in S(c):E(u)=E(\tilde{u})\right\} \neq \emptyset
\end{equation*}%
is orbitally stable. That is to say, for any $\varepsilon >0$, there exists $%
\delta >0$ such that for any $\varphi \in H^{1}(\mathbb{R}^{N})$ satisfing
\begin{equation*}
dist_{H^{1}}(\varphi ,\mathcal{M}_{c})<\delta ,
\end{equation*}%
the solution $\psi (t,\cdot )$ of the Cauchy problem to (\ref{e1-0}) with
the initial data $\psi (0,\cdot )=\varphi $ satisfies
\begin{equation*}
\sup_{t\in \lbrack 0,T)}dist_{X}(\psi (t,\cdot ),\mathcal{M}%
_{c})<\varepsilon ,
\end{equation*}%
where $T$ is the maximal existence time for $\psi (t,\cdot ).$
\end{theorem}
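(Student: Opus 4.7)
The plan is to split the argument into global well-posedness and orbital stability, and to handle the stability part by a Cazenave--Lions-type contradiction.

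For global existence, Theorem \ref{t1}(i) supplies a unique local solution $\psi\in C([0,T_{\max});H^{1}(\mathbb{R}^{N}))$, and conservation of mass fixes $\|\psi(t)\|_{2}=\|\psi_{0}\|_{2}$; only the gradient needs to be controlled. Since $\alpha<\beta$, one has the elementary pointwise bound
\[ |x-y|^{-\alpha}\leq R^{\beta-\alpha}|x-y|^{-\beta}+R^{-\alpha}\quad\text{for every }R>0,\,x\neq y, \]
which after integration gives $B_{\alpha}(\psi)\leq R^{\beta-\alpha}B_{\beta}(\psi)+R^{-\alpha}\|\psi_{0}\|_{2}^{4}$. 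Choosing $R$ so small that $R^{\beta-\alpha}\leq|\mu_{\beta}|/2$ and substituting into the conserved energy identity $E[\psi(t)]=E[\psi_{0}]$, the positive term $\frac{|\mu_{\beta}|}{4}B_{\beta}(\psi(t))$ absorbs $\frac14 B_{\alpha}(\psi(t))$, yielding $\|\nabla\psi(t)\|_{2}\leq C(\psi_{0},\mu_{\beta})$ and hence $T_{\max}=+\infty$.

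For orbital stability I would argue by contradiction. Suppose the claim fails: then there exist $\varepsilon_{0}>0$, initial data $\psi_{0}^{n}$ with $\mathrm{dist}_{H^{1}}(\psi_{0}^{n},\mathcal{M}_{c})\to 0$, and times $t_{n}\geq 0$ with $\mathrm{dist}_{H^{1}}(\psi^{n}(t_{n}),\mathcal{M}_{c})\geq\varepsilon_{0}$. Setting $\varphi_{n}:=\psi^{n}(t_{n})$, the conservation of mass and energy together with the continuity of $E$ on $H^{1}$ imply $\|\varphi_{n}\|_{2}^{2}\to c$ and $E(\varphi_{n})\to\sigma(c)=E(\tilde{u})$. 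Renormalizing to $\tilde{\varphi}_{n}:=(\sqrt{c}/\|\varphi_{n}\|_{2})\varphi_{n}$ turns $\{\varphi_{n}\}$ into a minimizing sequence $\{\tilde{\varphi}_{n}\}\subset S(c)$ for $\sigma(c)$, and $\|\tilde{\varphi}_{n}-\varphi_{n}\|_{H^{1}}\to 0$ thanks to the uniform $H^{1}$ bound just obtained. Invoking the strong $H^{1}$ compactness of minimizing sequences for $\sigma(c)$ proved in the course of Theorem \ref{t2}, one finds, up to extraction, translations $y_{n}\in\mathbb{R}^{N}$ and phases $\theta_{n}\in\mathbb{R}$ such that $e^{-i\theta_{n}}\tilde{\varphi}_{n}(\cdot+y_{n})\to u$ strongly in $H^{1}$ for some $u\in\mathcal{M}_{c}$; by translation and phase invariance of $\mathcal{M}_{c}$ this forces $\mathrm{dist}_{H^{1}}(\varphi_{n},\mathcal{M}_{c})\to 0$, a contradiction.

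The main obstacle is precisely the strong $H^{1}$ compactness of minimizing sequences. It rests on two properties established in the proof of Theorem \ref{t2}: the negativity $\sigma(c)<0$ (which excludes vanishing of any minimizing sequence) and the strict subadditive inequality $\sigma(c)<\sigma(c_{1})+\sigma(c-c_{1})$ for all $0<c_{1}<c$ (which excludes dichotomy). The subadditivity is subtle here because $E$ mixes two nonlocal terms with opposite signs; the dilation-based test function highlighted in the Introduction is what makes this comparison work. Granted these two facts, the concentration--compactness lemma upgrades the weak to strong $H^{1}$ convergence modulo translations and phases, which closes the argument.
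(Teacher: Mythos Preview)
Your proof is correct and follows the same overall architecture as the paper: an a~priori $H^{1}$ bound via energy/mass conservation for global existence, and a Cazenave--Lions contradiction argument for orbital stability, resting on the compactness of minimizing sequences obtained in the proof of Theorem~\ref{T5-1}.

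The one noteworthy difference is in the estimate for global existence. The paper interpolates $B_{\alpha}$ between $B_{\beta}$ and $B_{1}$ via H\"older (with exponent $\theta=(\alpha-1)/(\beta-1)$) and Young's inequality, and then controls $B_{1}$ by the Gagliardo--Nirenberg bound $B_{1}(u)\leq \mathcal{S}_{1}\Vert u\Vert_{2}^{3}\Vert\nabla u\Vert_{2}$, arriving at $E(\psi)\geq \tfrac12 A(\psi)-C\,c^{3/2}A(\psi)^{1/2}$. Your pointwise splitting $|x|^{-\alpha}\leq R^{\beta-\alpha}|x|^{-\beta}+R^{-\alpha}$ is more elementary and gives the stronger lower bound $E(\psi)\geq \tfrac12 A(\psi)-C$ directly, avoiding the detour through $B_{1}$. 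Both arguments exploit that the repulsive term $\tfrac{|\mu_{\beta}|}{4}B_{\beta}$ can absorb the attractive $B_{\alpha}$ at short range; the paper's route has the (here unneeded) advantage of producing an explicit coercivity exponent, while yours is cleaner. For the stability part, your treatment is slightly more careful than the paper's (you renormalize to $S(c)$ and track the phase $\theta_{n}$ explicitly, whereas the paper assumes $u_{n}^{0}\in S(c)$ without loss of generality); the substance is identical.
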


\begin{remark}
In light of the variational characterization of $u^{-}$ and the geometric structure of the fibering map, we conjecture that standing wave $e^{i\lambda t}u^{-}(x)$ is strongly unstable. As mentioned in the literature \cite{JL4}, the instability will not occur by finite-time blow-up due to the global existence of solutions with any initial data for the Cauchy problem (\ref{e1-0}).
\end{remark}

Moreover, following the argumemts in \cite{TVZ}, via a priori interaction
Morawetz estimate, we give the scattering result for the Cauchy problem to (%
\ref{e1-0}).

\begin{theorem}
\label{t5} Assume that $\mu_{\beta}<0$, $N\geq 3$ and $2<\alpha <\beta <\min \left\{ N,4\right\}$.
If the small mass condition holds, there exists unique global solution $%
\psi _{\pm }\in H^{1}(\mathbb{R}^{N})$ such that
\begin{equation*}
\Vert \psi (t)-e^{i\Delta t}\psi _{\pm }\Vert _{H^{1}(\mathbb{R}%
^{N})}\rightarrow 0\text{ as }t\rightarrow \pm \infty .
\end{equation*}
\end{theorem}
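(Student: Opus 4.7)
The plan is to adapt the a priori interaction Morawetz / Strichartz machinery of Tao--Visan--Zhang \cite{TVZ}, developed there for Hartree-type NLS, to the present competing-potential situation, in which $|x|^{-\alpha}$ enters with a defocusing sign ($\mu_{\alpha}=1>0$) and $|x|^{-\beta}$ with a focusing sign ($\mu_\beta<0$). The small mass hypothesis will play a double role: first, to promote the local $H^1$ solution furnished by Theorem \ref{t1} to a uniformly bounded global-in-time solution, and second, to force the ``wrong-sign'' contribution of the focusing nonlocality in the Morawetz identity to behave as a genuine perturbation of the coercive defocusing contribution.

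First I would establish the uniform $H^1$ bound. Using conservation of energy and mass together with the Hardy--Littlewood--Sobolev inequality and the Gagliardo--Nirenberg-type estimate of Lemma \ref{L2-1}, one controls the focusing term by $|\mu_\beta|\,B_\beta(\psi(t))\leq C|\mu_\beta|\,\|\psi_0\|_2^{\theta_1}\|\nabla\psi(t)\|_2^{\theta_2}$ with $\theta_2<2$. Under the smallness of $\|\psi_0\|_2$, this contribution is absorbed into $\tfrac{1}{2}\|\nabla\psi(t)\|_2^{2}$, yielding $\sup_{t}\|\nabla\psi(t)\|_2\leq C(\|\psi_0\|_{H^1})$, which together with Theorem \ref{t1} rules out finite-time blow-up and gives the global solution $\psi\in C(\mathbb{R};H^1)$.

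The core step is the interaction Morawetz estimate. I would introduce the functional
\begin{equation*}
M(t) := 2\,\mathrm{Im}\int_{\mathbb{R}^N}\!\!\int_{\mathbb{R}^N} |\psi(t,y)|^{2}\,\frac{x-y}{|x-y|}\cdot\overline{\psi(t,x)}\,\nabla\psi(t,x)\,dx\,dy,
\end{equation*}
which satisfies $|M(t)|\lesssim \|\psi_0\|_2^{3}\|\nabla\psi(t)\|_2\lesssim 1$ uniformly in $t$. Differentiating $M(t)$ along (\ref{e1-0}) and symmetrizing yields a differential inequality of the form
\begin{equation*}
\frac{d}{dt}M(t) \geq c_{N}\,\mathcal{K}(\psi(t)) + I_{\alpha}(t) + \mu_\beta\,I_{\beta}(t),
\end{equation*}
where $\mathcal{K}(\psi(t))\geq 0$ is the classical Morawetz coercive quantity, $I_{\alpha}(t)\geq 0$ after symmetrization, and $I_{\beta}(t)$ is of the same type as $I_{\alpha}(t)$ but with the kernel $|x|^{-\beta}$. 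Since $\beta>\alpha$ and $\|\psi_0\|_2$ is small, the term $|\mu_\beta\,I_{\beta}(t)|$ can be controlled by a small multiple of $\mathcal{K}(\psi(t))+I_{\alpha}(t)$ via Hardy--Littlewood--Sobolev, and thus absorbed into the left-hand side. Integrating in $t\in\mathbb{R}$ and using the uniform bound on $M(t)$ produces the a priori space-time estimate
\begin{equation*}
\int_{-\infty}^{+\infty}\mathcal{K}(\psi(t))\,dt \leq C(\|\psi_0\|_{H^1}).
\end{equation*}

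Once this space-time bound is in hand, the interpolation / Strichartz argument of \cite{TVZ} converts it into finiteness of an admissible Strichartz norm of $\psi$ on $\mathbb{R}$, from which finiteness of the full $H^1$-Strichartz norms follows by a standard continuity argument. The existence of $\psi_\pm\in H^1(\mathbb{R}^N)$ with $\|\psi(t)-e^{i\Delta t}\psi_\pm\|_{H^{1}}\to 0$ as $t\to\pm\infty$ then follows from Duhamel's formula by the usual asymptotic completeness argument. The main obstacle is the competing sign structure in the Morawetz identity: the focusing $|x|^{-\beta}$ term produces a contribution whose sign cannot be determined a priori, and the entire scheme succeeds only because the small mass condition quantitatively dominates it by the coercive defocusing part. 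A secondary technical point is that, because the two kernels have different homogeneities, the precise small mass threshold depends in a non-trivial way on both $\alpha$ and $\beta$, so one must be careful that a single smallness assumption suffices simultaneously for global existence and for the Morawetz absorption.
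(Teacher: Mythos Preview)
Your overall strategy---interaction Morawetz plus small mass absorption, following \cite{TVZ}---is exactly what the paper indicates; indeed the paper gives no detailed proof of this theorem at all, merely the sentence ``following the arguments in \cite{TVZ}, via a priori interaction Morawetz estimate''. So on the level of approach you are aligned with, and in fact more explicit than, the paper.

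However, you have the focusing/defocusing roles of the two nonlinearities reversed, and this propagates through the whole sketch. In equation (\ref{e1-0}) the term $(|x|^{-\alpha}\ast|\psi|^{2})\psi$ carries coefficient $+1$ and is \emph{focusing}, while the term $\mu_\beta(|x|^{-\beta}\ast|\psi|^{2})\psi$ with $\mu_\beta<0$ is \emph{defocusing}; this is visible from the energy $E(u)=\tfrac12 A(u)-\tfrac14 B_\alpha(u)+\tfrac{|\mu_\beta|}{4}B_\beta(u)$ and from Lemma~\ref{L5-0}. Consequently: (a) your Step~1 bound is mis-stated---the term that must be controlled is $B_\alpha$, not $|\mu_\beta|B_\beta$, and the claim ``$\theta_2<2$'' is false since the Gagliardo--Nirenberg exponent on $\|\nabla\psi\|_2$ is $\alpha>2$ (resp.\ $\beta>2$). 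The uniform $H^1$ bound in fact holds for \emph{all} mass, by the interpolation $B_\alpha(u)\le \varepsilon B_\beta(u)+C_\varepsilon B_1(u)$ used in Lemma~\ref{L5-0} and Theorem~\ref{t3}, not by the absorption you describe. (b) In the Morawetz identity the signs flip: the coercive nonlinear contribution is $|\mu_\beta|I_\beta\ge 0$, and it is the focusing $\alpha$-contribution $-I_\alpha$ that must be absorbed under the small mass condition, not the other way round.

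With these corrections the scheme you outline is the intended one; but as written, the identification of which term provides coercivity and which is perturbative is wrong, and your absorption claims (both in the energy step and in the Morawetz step) do not hold as stated.
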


The rest of this paper is organized as follows. In Section 2, we introduce
some preliminary results. In Section 3, we study the multiplicity of
normalized solutions for problem (\ref{e1-2}) and prove Theorems \ref{t2}, \ref{t4} and \ref{t3}.

\section{Preliminary results}

Recall the following well-known Gagliardo-Nirenberg inequality and
Hardy-Littlewood-Sobolev inequality.

\begin{lemma}[Gagliardo-Nirenberg inequality \protect\cite{MS,Y}]
\label{L2-1} Let $N\geq 3$. Then there exists a best constant $\mathcal{S}%
_{\gamma }:=\mathcal{S}_{\gamma }(N,\gamma )>0$ such that
\begin{equation}
\int_{\mathbb{R}^{N}}(|x|^{-\gamma }\ast |u|^{2})|u|^{2}dx\leq \mathcal{S}%
_{\gamma }\Vert \nabla u\Vert _{2}^{\gamma }\Vert u\Vert _{2}^{4-\gamma }.
\label{e2-3}
\end{equation}%
Here,
\begin{equation*}
\mathcal{S}_{\gamma }=\left( \frac{4-\gamma }{\gamma }\right) ^{\gamma /2}%
\frac{4}{(4-\gamma )\Vert Q_{\gamma }\Vert _{2}^{2}},
\end{equation*}%
where $Q_{\gamma }$ is a ground state of the following Hartree equation
\begin{equation*}
-\Delta Q+Q=(|x|^{-\gamma }\ast |Q|^{2})Q\text{ in }\mathbb{R}^{N}.
\end{equation*}
\end{lemma}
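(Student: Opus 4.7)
The plan is to prove the inequality and identify the sharp constant simultaneously through a Weinstein-type variational problem. I would introduce the scale-invariant quotient
$$J(u) := \frac{B_\gamma(u)}{\|\nabla u\|_2^{\gamma}\,\|u\|_2^{4-\gamma}}, \qquad u\in H^1(\mathbb{R}^N)\setminus\{0\},$$
and set $\mathcal{S}_\gamma := \sup_u J(u)$. Finiteness of $\mathcal{S}_\gamma$, which is exactly inequality (\ref{e2-3}), follows from a standard two-step estimate: the Hardy-Littlewood-Sobolev inequality gives $B_\gamma(u) \leq C\|u\|_{2r}^{4}$ with $r = 2N/(2N-\gamma)$; then the classical Gagliardo-Nirenberg interpolation $\|u\|_{2r} \leq C\|\nabla u\|_2^{\gamma/4}\|u\|_2^{1-\gamma/4}$, valid since $2<2r<2^\ast$ when $0<\gamma<\min\{N,4\}$, yields $B_\gamma(u) \leq C\|\nabla u\|_2^{\gamma}\|u\|_2^{4-\gamma}$.

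Next I would produce a maximizer. Fix a maximizing sequence $(u_n)$ normalized by $\|\nabla u_n\|_2 = \|u_n\|_2 = 1$. Since the kernel $|x|^{-\gamma}$ is radially symmetric decreasing, the Riesz rearrangement inequality shows that passing to the symmetric decreasing rearrangement $u_n^\ast$ can only increase $B_\gamma(u_n)$ while preserving $\|u_n\|_2$ and not increasing $\|\nabla u_n\|_2$; hence I may assume each $u_n$ is radial, nonnegative, and decreasing. Strauss's compact embedding $H^1_{\mathrm{rad}}(\mathbb{R}^N) \hookrightarrow L^p(\mathbb{R}^N)$ for $2<p<2^\ast$, applied with $p=2r$, upgrades weak to strong convergence in the nonlocal term, so up to a subsequence $u_n \rightharpoonup Q_\gamma$ in $H^1$ with $B_\gamma(u_n)\to B_\gamma(Q_\gamma)=\mathcal{S}_\gamma$, producing a nontrivial nonnegative maximizer $Q_\gamma$.

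Finally I would extract the explicit value of $\mathcal{S}_\gamma$ from the Euler-Lagrange equation. The critical-point condition for $J$ at $Q_\gamma$ reads $-a\Delta Q_\gamma + bQ_\gamma = c\,(|x|^{-\gamma}\ast|Q_\gamma|^2)Q_\gamma$ with explicit positive constants $a,b,c$ expressible through $\|\nabla Q_\gamma\|_2$, $\|Q_\gamma\|_2$, and $B_\gamma(Q_\gamma)$; rescaling $Q_\gamma(x) \mapsto \mu\, Q_\gamma(\lambda x)$ with suitable $\mu,\lambda>0$ absorbs these constants and normalizes the equation to the canonical Hartree form stated in the lemma. Two identities then close the computation: multiplying the equation by $Q_\gamma$ and integrating gives $\|\nabla Q_\gamma\|_2^{2} + \|Q_\gamma\|_2^{2} = B_\gamma(Q_\gamma)$, while the Pohozaev identity for the Hartree equation (obtained via the standard dilation test $Q_\gamma(sx)$ at $s=1$) yields $(N-2)\|\nabla Q_\gamma\|_2^{2} + N\|Q_\gamma\|_2^{2} = \tfrac{2N-\gamma}{2}B_\gamma(Q_\gamma)$. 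Solving this $2\times 2$ linear system gives $\|\nabla Q_\gamma\|_2^{2} = \tfrac{\gamma}{4-\gamma}\|Q_\gamma\|_2^{2}$ and $B_\gamma(Q_\gamma) = \tfrac{4}{4-\gamma}\|Q_\gamma\|_2^{2}$, and substituting these into $\mathcal{S}_\gamma = J(Q_\gamma)$ produces the stated formula. The main obstacle is the existence step: ruling out vanishing and loss of mass in the maximizing sequence relies crucially on the combination of Riesz rearrangement (to enforce radial symmetry) with Strauss's compact embedding; once a maximizer has been secured, everything else is algebra dictated by the scaling of the Hartree equation.
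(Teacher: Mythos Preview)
The paper does not prove this lemma at all; it is stated with citations to \cite{MS,Y} and used as a known result. Your proposal supplies exactly the standard Weinstein-type argument that those references carry out: maximize the scale-invariant quotient $J$, use Hardy--Littlewood--Sobolev plus interpolation for finiteness, Riesz rearrangement and Strauss compactness for existence of a radial maximizer, and then the Nehari/Pohozaev system for the explicit constant. The algebra you outline is correct and yields the stated formula, so your proof is sound and matches the approach in the cited sources; there is simply nothing in the present paper to compare it against.
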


\begin{lemma}[Hardy-Littlewood-Sobolev inequality \protect\cite{LL}]
\label{L2-10} Assume that $1<a<b<+\infty $ is such that $\frac{1}{a}+\frac{1%
}{b}+\frac{\gamma }{N}=2.$ Then there exists $\mathcal{C}_{HLS}(N,\gamma)>0$ such that
\begin{equation*}
\int_{\mathbb{R}^{3}}\int_{\mathbb{R}^{3}}\frac{|f(x)||g(y)|}{|x-y|^{\gamma }%
}\leq \mathcal{C}_{HLS}\Vert f\Vert _{a}\Vert g\Vert _{b},\quad \forall f\in
L^{a}(\mathbb{R}^{N})\text{ and }\forall g\in L^{b}(\mathbb{R}^{N}).
\end{equation*}
\end{lemma}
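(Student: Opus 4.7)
The plan is to reduce the bilinear inequality to a mapping property of the Riesz potential and then prove that mapping by a kernel-splitting/Marcinkiewicz interpolation argument. Set $I_\gamma f(x):=\int_{\mathbb{R}^N}|x-y|^{-\gamma}f(y)\,dy$. By Hölder's inequality and $L^p$--$L^{p'}$ duality, the claimed bilinear estimate is equivalent to the operator bound
\begin{equation*}
\|I_\gamma f\|_{b'}\leq \mathcal{C}_{HLS}\|f\|_a,
\end{equation*}
where $b'$ is the Hölder conjugate of $b$. The scaling hypothesis $\tfrac{1}{a}+\tfrac{1}{b}+\tfrac{\gamma}{N}=2$ is exactly $\tfrac{1}{b'}=\tfrac{1}{a}-\tfrac{N-\gamma}{N}$, and is forced by dilation invariance: replacing $f$ by $f(\delta\,\cdot)$ shows both sides of the Riesz bound scale identically only under this relation, so the relation cannot be avoided.

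To prove this operator bound, I would first establish the weak-type estimate $\|I_\gamma f\|_{L^{b',\infty}}\leq C\|f\|_a$ by splitting the kernel. For each $R>0$ write $|x|^{-\gamma}=K^{<}_R+K^{>}_R$ with $K^{<}_R=|x|^{-\gamma}\chi_{\{|x|\leq R\}}$ and $K^{>}_R=|x|^{-\gamma}\chi_{\{|x|>R\}}$. Then $K^{<}_R\in L^1$ with norm of order $R^{N-\gamma}$, while $K^{>}_R\in L^{a'}$ with norm of order $R^{N/a'-\gamma}$; the latter integrability requires $\gamma a'>N$, which is equivalent to $\tfrac{1}{a}+\tfrac{1}{b}<2$ and is guaranteed by the hypothesis $1<a<b<\infty$. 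Young's and Hölder's inequalities then give
\begin{equation*}
\|K^{<}_R\ast f\|_a\leq CR^{N-\gamma}\|f\|_a,\qquad \|K^{>}_R\ast f\|_\infty\leq CR^{N/a'-\gamma}\|f\|_a.
\end{equation*}
For each $\lambda>0$, I would choose $R=R(\lambda,\|f\|_a)$ so that the $L^\infty$-piece is bounded by $\lambda/2$; applying Chebyshev's inequality to the $L^a$-piece then yields $|\{x:|I_\gamma f(x)|>\lambda\}|\leq C(\|f\|_a/\lambda)^{b'}$, which is the weak-type bound.

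Finally, repeating this argument at two nearby admissible exponent pairs $(a_0,r_0)$ and $(a_1,r_1)$ straddling $(a,b')$ and invoking Marcinkiewicz interpolation upgrades the weak bound to the strong bound $I_\gamma:L^a\to L^{b'}$; dualising once more recovers the bilinear inequality with some finite constant $\mathcal{C}_{HLS}$. The main obstacle is not the estimates themselves but the sharpness of the admissibility condition: the hypothesis $1<a<b<\infty$ is essential, since the endpoints $a=1$ or $b=\infty$ genuinely fail (there $I_\gamma$ does not map into a Banach $L^p$), and one must verify that two \emph{distinct} admissible pairs straddling $(a,b')$ really do exist so that Marcinkiewicz applies rather than only giving back a single weak-type point. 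No attempt is made here to identify the sharp constant; that would require Lieb's symmetric-decreasing rearrangement and conformal-invariance argument, and is not needed for the variational and dynamical applications in the sequel.
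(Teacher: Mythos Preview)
Your argument is essentially correct and follows the standard route to the Hardy--Littlewood--Sobolev inequality via the boundedness of the Riesz potential $I_\gamma:L^a\to L^{b'}$, established by kernel splitting plus Marcinkiewicz interpolation. One small slip: the integrability condition $\gamma a'>N$ is equivalent to $b>1$ (substitute $\gamma=N(2-\tfrac{1}{a}-\tfrac{1}{b})$ and simplify), not to $\tfrac{1}{a}+\tfrac{1}{b}<2$ as you wrote; the latter is just $\gamma>0$. This does not affect the argument, since $b>1$ is part of the hypothesis anyway.

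As for the comparison: the paper gives no proof of this lemma at all. It is simply quoted from Lieb--Loss \cite{LL} as a classical inequality and used as a tool in the profile decomposition (Proposition~\ref{P3-18}) and elsewhere. So you have supplied more than the paper does. Your approach is the standard textbook one; an alternative, closer to Lieb's original treatment, proceeds via symmetric-decreasing rearrangement and the layer-cake representation, which also yields the sharp constant and extremisers---but, as you correctly note, that refinement is irrelevant for the applications here.
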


\begin{remark}
For $\gamma=4$, by \cite{GY}, we know that
\begin{equation*}
\mathcal{S}_{4}^{-\frac{1}{2}}=\inf_{u\in D^{1,2}(\mathbb{R}^{N})\backslash\left\{0\right\}}
\frac{\int_{\mathbb{R}^{N}}|\nabla u|^{2}dx}{\left(\int_{\mathbb{R}^{N}}(|x|^{-4}\ast |u|^{2})|u|^{2}dx\right)^{\frac{1}{2}}},
\end{equation*}
where $D^{1,2}(\mathbb{R}^{N}):=\left\{u\in L^{\frac{2N}{N-2}}(\mathbb{R}^{N}):\nabla u\in L^{2}(\mathbb{R}^{N})\right\}$. Moreover, $\mathcal{S}_{4}^{-\frac{1}{2}}$ is achieved if and only if
\begin{equation*}
U_{\varepsilon,y}(x):=
\varepsilon^{\frac{2-N}{2}}U(\frac{x-y}{\varepsilon}),\,
\forall \varepsilon>0,y\in \mathbb{R}^{N},
\end{equation*}
where
\begin{equation*}
U(x):=S^{\frac{4-N}{4}}\mathcal{C}_{HLS}(N,4)^{-\frac{1}{2}}
\frac{[N(N-2)]^{\frac{N-2}{4}}}{(1+|x|^{2})^{\frac{N-2}{2}}}
\end{equation*}
with $\mathcal{C}_{HLS}(N,4)=\pi^{2}\frac{\Gamma(\frac{N-4}{2})}{\Gamma(N-2)}
\left\{\frac{\Gamma(\frac{N}{2})}{\Gamma(N)}\right\}
^{\frac{4-N}{N}}$ and $S$ being the best constant for the Sobolev embedding $D^{1,2}(\mathbb{R}^{N})\hookrightarrow L^{\frac{2N}{N-2}}(\mathbb{R}^{N})$.

\end{remark}

\begin{lemma}
\label{L2-2} Let $u$ be a weak solution to the equation
\begin{equation*}
-\Delta u+\lambda u=\left( |x|^{-\alpha }\ast u^{2}\right) u+\mu _{\beta
}\left( |x|^{-\beta }\ast u^{2}\right) u.
\end{equation*}%
Then we have following Pohozaev identity
\begin{equation}
\frac{N-2}{2}A\left( u\right) +\frac{\lambda N}{2}\int_{\mathbb{R}%
^{N}}|u|^{2}dx=\frac{2N-\alpha }{4}B_{\alpha }\left( u\right) +\frac{\mu
_{\beta }\left( 2N-\beta \right) }{4}B_{\beta }\left( u\right) .
\label{e2-5}
\end{equation}%
Moreover, there holds
\begin{equation*}
A\left( u\right) -\frac{\alpha }{4}B_{\alpha }\left( u\right) -\frac{\mu
_{\beta }\beta }{4}B_{\beta }\left( u\right) =0.
\end{equation*}
\end{lemma}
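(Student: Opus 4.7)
The plan is to obtain both identities from the action functional $E_\lambda(u) = E(u) + \tfrac{\lambda}{2}\|u\|_2^2$, whose Euler--Lagrange equation is precisely the given equation. The two identities then arise from two distinct variations of $u$: a pointwise scaling (which yields Pohozaev) and the trivial variation by $u$ itself (which yields the Nehari identity); eliminating $\lambda\|u\|_2^2$ between them produces the second stated relation.

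First I would test the equation against $u$, which is legitimate since $u\in H^{1}(\mathbb{R}^{N})$ and, by Lemmas \ref{L2-1}--\ref{L2-10}, the nonlocal terms lie in $L^{2}$ together with their pairings with $u$. Integration by parts gives the Nehari-type identity
\begin{equation*}
A(u) + \lambda \|u\|_2^2 = B_\alpha(u) + \mu_\beta B_\beta(u). \qquad (\ast)
\end{equation*}
Next, to derive (\ref{e2-5}), I would employ the $L^{2}$-preserving scaling is not used here; instead I use the full dilation $u_t(x):=u(x/t)$ and compute the elementary scaling relations
\begin{equation*}
A(u_t)=t^{N-2}A(u),\qquad \|u_t\|_2^2=t^{N}\|u\|_2^2,\qquad B_\gamma(u_t)=t^{2N-\gamma}B_\gamma(u).
\end{equation*}
Since $u$ is a critical point of the unconstrained functional $E_\lambda$ on $H^{1}(\mathbb{R}^{N})$, the map $t\mapsto E_\lambda(u_t)$ is stationary at $t=1$. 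Substituting the scalings into
\begin{equation*}
E_\lambda(u_t)=\tfrac{t^{N-2}}{2}A(u)+\tfrac{\lambda t^{N}}{2}\|u\|_2^2-\tfrac{t^{2N-\alpha}}{4}B_\alpha(u)-\tfrac{\mu_\beta t^{2N-\beta}}{4}B_\beta(u)
\end{equation*}
and differentiating at $t=1$ yields exactly
\begin{equation*}
\tfrac{N-2}{2}A(u)+\tfrac{\lambda N}{2}\|u\|_2^2=\tfrac{2N-\alpha}{4}B_\alpha(u)+\tfrac{\mu_\beta(2N-\beta)}{4}B_\beta(u),
\end{equation*}
which is (\ref{e2-5}). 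For the second identity, I would multiply $(\ast)$ by $N/2$ and subtract from (\ref{e2-5}) to kill $\lambda\|u\|_2^2$; the coefficient of $A(u)$ becomes $-1$ and the coefficients of $B_\alpha(u)$, $\mu_\beta B_\beta(u)$ simplify to $-\alpha/4$ and $-\beta/4$ respectively, giving $A(u)-\tfrac{\alpha}{4}B_\alpha(u)-\tfrac{\mu_\beta\beta}{4}B_\beta(u)=0$.

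The main subtlety I would expect is rigorously justifying the scaling identity for the Pohozaev argument, since $\frac{d}{dt}|_{t=1}u_t = -x\cdot\nabla u$ need not a priori live in $H^{1}(\mathbb{R}^{N})$, so stationarity of $t\mapsto E_\lambda(u_t)$ does not follow immediately from $E_\lambda'(u)=0$. The standard way around this is a bootstrap/regularity argument: using Hardy--Littlewood--Sobolev to control $(|x|^{-\alpha}\ast u^2)u$ and $(|x|^{-\beta}\ast u^2)u$ in suitable $L^{p}$ spaces, together with elliptic regularity applied to $-\Delta u=g(u)-\lambda u$, shows $u\in H^{2}_{\mathrm{loc}}$ and $u,\nabla u$ decay, so one may carry out the classical Pohozaev manipulation (multiplying the equation by $\chi_R(x)\,x\cdot\nabla u$ and letting $R\to\infty$) or equivalently pass to the limit in the difference quotient of $E_\lambda(u_t)$. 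Once this regularity step is in place, everything reduces to the algebraic computation above.
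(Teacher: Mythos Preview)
Your proposal is correct and, at the rigorous level, coincides with the paper's proof. The paper derives (\ref{e2-5}) directly by the cutoff multiplier method---testing the equation against $\phi(\tau x)\,x\cdot\nabla u$ and letting $\tau\to 0$ via dominated convergence---and then combines it with the Nehari identity obtained by testing against $u$; this is precisely the ``classical Pohozaev manipulation'' you name as the justification for your scaling heuristic. The only difference is presentational: you frame the identity as stationarity of $t\mapsto E_\lambda(u_t)$ under the dilation $u_t(x)=u(x/t)$ and then invoke the cutoff argument to make it rigorous, whereas the paper skips the scaling interpretation and performs the cutoff computation from the outset. Your final algebraic elimination of $\lambda\|u\|_2^2$ matches the paper's exactly.
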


\begin{proof}
Let $\phi \in C_{0}^{\infty }(\mathbb{R}^{N})$ such that $\phi (x)=1$ on $%
B_{1}(0)$, $\phi (x)=0$ for $x\not\in B_{2}(0)$ and $|\nabla \phi (x)|\leq 2$
for $x\in \mathbb{R}^{N}$. Since the function $\phi $ has compact support,
we can define $\phi _{\tau }:=\phi (\tau x)x\cdot \nabla u(x)\in H^{1}(%
\mathbb{R}^{N})$ for $\tau \in (0,+\infty )$. By testing (\ref{e1-1}) with
the function $\phi _{\tau }$, we have
\begin{equation*}
\int_{\mathbb{R}^{N}}\nabla u\cdot \nabla \phi _{\tau }dx+\lambda \int_{%
\mathbb{R}^{N}}u\phi _{\tau }dx=\int_{\mathbb{R}^{N}}(|x|^{-\alpha }\ast
|u|^{2})u\phi _{\tau }dx+\mu _{\beta }\int_{\mathbb{R}^{N}}(|x|^{-\beta
}\ast |u|^{2})u\phi _{\tau }dx.
\end{equation*}%
By using an integration by parts, we derive
\begin{eqnarray*}
\int_{\mathbb{R}^{N}}\nabla u\cdot \nabla \phi _{\tau }dx &=&\int_{\mathbb{R}%
^{N}}\phi (\tau x)\left( |\nabla u(x)|^{2}+x\cdot \nabla (|\nabla
u|^{2}/2)(x)\right) dx \\
&&+\int_{\mathbb{R}^{N}}(\tau \nabla u(x)\cdot \nabla \phi (\tau x))(x\cdot
\nabla u(x))dx \\
&=&-\frac{1}{2}\int_{\mathbb{R}^{N}}\left( (N-2)\phi (\tau x)+\tau x\cdot
\nabla \phi (\tau x)\right) |\nabla u(x)|^{2}dx \\
&&+\int_{\mathbb{R}^{N}}(\nabla u(x)\cdot \nabla \phi (\tau x))(\tau x\cdot
\nabla u(x))dx.
\end{eqnarray*}%
Since $|(\eta \cdot \zeta )(\eta \cdot \xi )|\leq |\eta |^{2}|\zeta ||\xi |$
for any $\zeta ,\xi ,\eta \in \mathbb{R}^{N}$, we have
\begin{equation*}
|(\nabla u(x)\cdot \nabla \phi (\tau x))(\tau x\cdot \nabla u(x))|\leq
|\nabla u(x)|^{2}|\tau x||\nabla \phi (\tau x)|\leq |\nabla
u(x)|^{2}\sup_{z\in \mathbb{R}^{N}}|z||\nabla \phi (z)|.
\end{equation*}%
Taking a limit as $\tau \rightarrow 0$, by Lebesgue's dominated convergence
theorem, we obtain
\begin{equation*}
\lim_{\tau \rightarrow 0}\int_{\mathbb{R}^{N}}\nabla u\cdot \nabla \phi
_{\tau }dx=-\frac{N-2}{2}\int_{\mathbb{R}^{N}}|\nabla u|^{2}dx.
\end{equation*}%
In light of the definition of $\phi _{\tau }$, we get
\begin{eqnarray*}
\int_{\mathbb{R}^{N}}u\phi _{\tau }dx &=&\int_{\mathbb{R}^{N}}u(x)\phi (\tau
x)x\cdot \nabla u(x)dx \\
&=&\frac{1}{2}\int_{\mathbb{R}^{N}}\phi (\tau x)x\cdot \nabla (|u|^{2})(x)dx
\\
&=&-\frac{1}{2}\int_{\mathbb{R}^{N}}(N\phi (\tau x)+(\tau x)\cdot \nabla
\phi (\tau x))|u(x)|^{2}dx.
\end{eqnarray*}%
By Lebesgue's dominated convergence theorem again, we observe that
\begin{equation*}
\lim_{\tau \rightarrow 0}\int_{\mathbb{R}^{N}}u\phi _{\tau }dx=-\frac{N}{2}%
\int_{\mathbb{R}^{N}}|u|^{2}dx.
\end{equation*}%
Let us now compute the nonlocal term, by symmetry and integration by parts,
we get
\begin{eqnarray*}
&&\int_{\mathbb{R}^{N}}(|x|^{-\alpha }\ast |u|^{2})u\phi _{\tau }dx \\
&=&\int_{\mathbb{R}^{N}}\int_{\mathbb{R}^{N}}|x-y|^{-\alpha }|u(y)|^{2}\phi
(\tau x)x\cdot \nabla (|u|^{2}/2)(x)dxdy \\
&=&\frac{1}{2}\int_{\mathbb{R}^{N}}\int_{\mathbb{R}^{N}}|x-y|^{-\alpha
}\left( |u(y)|^{2}\phi (\tau x)x\cdot \nabla (|u|^{2}/2)(x)+|u(x)|^{2}\phi
(\tau y)y\cdot \nabla (|u|^{2}/2)(y)\right) dxdy \\
&=&-\int_{\mathbb{R}^{N}}\int_{\mathbb{R}^{N}}\frac{|u(x)|^{2}|u(y)|^{2}}{%
2|x-y|^{\alpha }}\left( N\phi (\tau x)+(\tau x)\cdot \nabla \phi (\tau x)-%
\frac{\alpha (x-y)\cdot (x\phi (\tau x)-y\phi (\tau y))}{2|x-y|^{2}}\right)
dxdy.
\end{eqnarray*}%
Then we conclude that
\begin{equation*}
\lim_{\tau \rightarrow 0}\int_{\mathbb{R}^{N}}(|x|^{-\alpha }\ast
|u|^{2})u\phi _{\tau }dx=-\frac{2N-\alpha }{4}\int_{\mathbb{R}%
^{N}}(|x|^{-\alpha }\ast |u|^{2})|u|^{2}dx.
\end{equation*}%
Similarly, we have
\begin{equation*}
\lim_{\tau \rightarrow 0}\int_{\mathbb{R}^{N}}(|x|^{-\beta }\ast
|u|^{2})u\phi _{\tau }dx=-\frac{2N-\beta }{4}\int_{\mathbb{R}%
^{N}}(|x|^{-\beta }\ast |u|^{2})|u|^{2}dx.
\end{equation*}%
Thus (\ref{e2-5}) holds.

On the other hand, multiplying (\ref{e1-1}) by $u$ and integrating on $%
\mathbb{R}^{N}$, we get
\begin{equation}
A(u)+\int_{\mathbb{R}^{N}}|u|^{2}dx=B_{\alpha }\left( u\right) +\mu _{\beta
}B_{\beta }\left( u\right) .  \label{e2-9}
\end{equation}%
Hence, by combining (\ref{e2-5}) and (\ref{e2-9}), we arrive at the
conclusion.
\end{proof}

Next, we give the so-called profile decomposition of bounded sequences,
proposed by G\'{e}rard in \cite{G}, which is crucial to recover the
compactness.

\begin{proposition}
\label{P3-18} Let $\left\{ u_{n}\right\} $ be bounded in $H^{1}(\mathbb{R}%
^{N})$. Then there are sequences $\left\{ \bar{u}_{i}\right\} _{i=0}^{\infty
}\subset H^{1}(\mathbb{R}^{N})$, $\left\{ y_{n}^{i}\right\} _{i=0}^{\infty
}\subset \mathbb{R}^{3}$ for any $n\geq 1$, such that $y_{n}^{0}=0$, $%
|y_{n}^{i}-y_{n}^{j}|\rightarrow \infty $ as $n\rightarrow \infty $ for $%
i\neq j$, and passing to a subsequence, the following conclusions hold for
any $i\geq 0$:
\begin{equation}
u_{n}(\cdot +y_{n}^{i})\rightharpoonup \bar{u}_{i}\quad \text{as }%
n\rightarrow \infty ,  \label{e7-1}
\end{equation}%
with $\lim \sup_{n\rightarrow \infty }\int_{\mathbb{R}^{N}}|z_{n}^{i}|^{q}dx%
\rightarrow 0$ as $i\rightarrow \infty $, where $2<q<2^{\ast}$, $%
z_{n}^{i}:=u_{n}-\sum\limits_{j=0}^{i}\bar{u}_{j}(\cdot -y_{n}^{j})$.
Moreover, there hold
\begin{equation}
\lim_{n\rightarrow \infty }\int_{\mathbb{R}^{N}}|\nabla
u_{n}|^{2}dx=\sum\limits_{j=0}^{i}\int_{\mathbb{R}^{N}}|\nabla \bar{u}%
_{j}|^{2}dx+\lim_{n\rightarrow \infty }\int_{\mathbb{R}^{3}}|\nabla
z_{n}^{i}|^{2}dx,\,  \label{e7-2}
\end{equation}%
and
\begin{equation}
\limsup_{n\rightarrow \infty }\int_{\mathbb{R}^{N}}\int_{\mathbb{R}^{N}}%
\frac{|u_{n}(x)|^{2}|u_{n}(y)|^{2}}{|x-y|^{\gamma }}dxdy=\sum\limits_{j=0}^{%
\infty }\int_{\mathbb{R}^{N}}\int_{\mathbb{R}^{N}}\frac{|\bar{u}_{j}(x)|^{2}|%
\bar{u}_{j}(y)|^{2}}{|x-y|^{\gamma }}dxdy,  \label{e7-5}
\end{equation}%
where $0<\gamma <\min \left\{ N,4\right\} $.
\end{proposition}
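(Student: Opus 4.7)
The plan is to build the profiles inductively, combining Banach--Alaoglu with Lions' vanishing/non-vanishing dichotomy to decide when to stop. Starting from $y_n^0=0$, I would take $\bar u_0$ to be the weak $H^1$-limit of a subsequence of $u_n$. Given profiles $\bar u_0,\dots,\bar u_i$ and translations $y_n^0,\dots,y_n^i$ already satisfying (\ref{e7-1}) together with $|y_n^j-y_n^k|\to\infty$ for $j\neq k$, set $\delta_i:=\limsup_{n\to\infty}\sup_{y\in\mathbb{R}^N}\int_{B_1(y)}|z_n^i|^2\,dx$. If $\delta_i=0$, Lions' concentration lemma yields $z_n^i\to 0$ in $L^q$ for every $2<q<2^{\ast}$, and the induction halts. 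Otherwise, I pick $y_n^{i+1}$ with $\int_{B_1(y_n^{i+1})}|z_n^i|^2\,dx\geq \delta_i/2$, extract a further subsequence so that $z_n^i(\cdot+y_n^{i+1})\rightharpoonup \bar u_{i+1}$ in $H^1$, and use the compact embedding $H^1(B_1)\hookrightarrow L^2(B_1)$ to guarantee $\bar u_{i+1}\neq 0$.

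Next I would verify the orthogonality $|y_n^{i+1}-y_n^j|\to\infty$ for $j\le i$ by contradiction: if $y_n^{i+1}-y_n^j\to y^\infty$ along a subsequence, expanding
\[
z_n^i(\cdot+y_n^{i+1})=u_n(\cdot+y_n^{i+1})-\sum_{k=0}^{i}\bar u_k(\cdot+y_n^{i+1}-y_n^k)
\]
and using the inductive orthogonality to send every $k\neq j$ term weakly to $0$, while both the $k=j$ term and $u_n(\cdot+y_n^{i+1})$ converge weakly to $\bar u_j(\cdot+y^\infty)$, yields $\bar u_{i+1}=0$, a contradiction.

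To prove (\ref{e7-2}), an iterated Br\'ezis--Lieb argument combined with the divergence of $|y_n^j-y_n^k|$ produces the asymptotic Pythagorean identity
\[
\|\nabla u_n\|_2^2=\sum_{j=0}^{i}\|\nabla\bar u_j\|_2^2+\|\nabla z_n^i\|_2^2+o(1),
\]
and the same argument applies to $\|\cdot\|_2^2$. Summing these over $i$ forces $\sum_{j\ge 0}\|\bar u_j\|_{H^1}^2<\infty$, hence $\|\bar u_{i+1}\|_{H^1}\to 0$. The selection rule $\|\bar u_{i+1}\|_{L^2(B_1(0))}^2\ge \delta_i/2$ then gives $\delta_i\to 0$, and a final call to Lions' lemma promotes this to the claimed smallness of $\|z_n^i\|_q$ as $i\to\infty$, uniformly in $n$.

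The principal obstacle is (\ref{e7-5}). My strategy is to first establish a nonlocal Br\'ezis--Lieb splitting
\[
\int\!\!\int\frac{|u_n(x)|^2|u_n(y)|^2}{|x-y|^{\gamma}}\,dx\,dy=\sum_{j=0}^{i}\int\!\!\int\frac{|\bar u_j(x)|^2|\bar u_j(y)|^2}{|x-y|^{\gamma}}\,dx\,dy+\int\!\!\int\frac{|z_n^i(x)|^2|z_n^i(y)|^2}{|x-y|^{\gamma}}\,dx\,dy+o(1),
\]
by applying the Hardy--Littlewood--Sobolev inequality (Lemma \ref{L2-10}) and interpolation to control error terms by the $L^q$-norms bounded in the previous step. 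The hardest ingredient is showing that cross terms $\int\!\!\int|\bar u_j(x-y_n^j)|^2|\bar u_k(y-y_n^k)|^2/|x-y|^{\gamma}\,dx\,dy$ with $j\neq k$ vanish; I would split the kernel dyadically with respect to $|y_n^j-y_n^k|\to\infty$ and apply HLS on each piece so that the near-diagonal part controls itself by translated $L^2$-mass and the far part is a decaying kernel times the full mass. Letting $i\to\infty$ and using the $L^q$-smallness of $z_n^i$ once more (via HLS) then delivers (\ref{e7-5}).
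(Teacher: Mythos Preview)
Your proposal is correct and follows essentially the same route as the paper: an inductive extraction of profiles via Lions' vanishing/non-vanishing dichotomy, orthogonality of the translation sequences, iterated Br\'ezis--Lieb for the $H^1$ splitting \eqref{e7-2}, and Hardy--Littlewood--Sobolev estimates for the nonlocal identity \eqref{e7-5}. The only notable difference is in the treatment of the cross terms $\int\!\!\int |\bar u_j(x-y_n^j)|^2|\bar u_k(y-y_n^k)|^2/|x-y|^{\gamma}\,dx\,dy$: rather than a dyadic decomposition of the kernel, the paper first reduces by density to compactly supported profiles, so that for $j\neq k$ the supports in $x$ and $y$ separate by a distance tending to infinity and the integral vanishes directly; this is simpler than your proposed splitting, though both work.
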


\begin{proof}
The proof is based on Vanishing Lemma \cite[Lemma I.1]{L1} and Brezis-Lieb
type results. The (\ref{e7-1})-(\ref{e7-2}) are proved in \cite{HK,MS0,Z}, for the convenience of the reader, we give the details of the proof. We assert that, by passing to a subsequence, there exist $K\in \mathbb{N}\cup \left\{\infty\right\}$ and there is a sequence $\left\{\bar{u}_{i}\right\}_{i=0}^{K}\subset H^{1}(\mathbb{R}^{N})$, for $0\leq i<K+1$ (if $K=\infty$, then $K+1=\infty$) there are sequences $\left\{z_{n}^{i}\right\}\subset H^{1}(\mathbb{R}^{N})$, $\left\{y_{n}^{i}\right\}\subset \mathbb{R}^{N}$ and positive constants $\left\{c_{i}\right\}_{i=0}^{K}$, $\left\{r_{i}\right\}_{i=0}^{K}$ such that $y_{n}^{0}=0$, $r_{0}=0$ and for any $0\leq i<K+1$ one has\newline
$(a)$ $u_{n}(\cdot+y_{n}^{i})\rightharpoonup \bar{u}_{i}$\,in $H^{1}(\mathbb{R}^{N})$ and $u_{n}(\cdot+y_{n}^{i})_{\chi_{B_{n}(0)}}\rightarrow \bar{u}_{i}$ in $L^{2}(\mathbb{R}^{N})$ as $n\rightarrow\infty$;\newline
$(b)$ $\bar{u}_{i}\neq 0$ if $i\geq 1$;\newline
$(c)$ $|y_{n}^{i}-y_{n}^{j}|\geq n-r_{i}-r_{j}$ for $0\leq j\neq i<K+1$ and sufficiently large $n$;\newline
$(d)$ $z_{n}^{-1}:=u_{n}$ and $z_{n}^{i}:=z_{n}^{i-1}-\bar{u}_{i}(\cdot-y_{n}^{i})$ for $n\geq 1$;\newline
$(e)$ $\int_{B_{r_{i}}(y_{n}^{i})}|z_{n}^{i-1}|^{2}dx\geq c_{i}\geq \frac{1}{2}\sup_{y\in \mathbb{R}^{N}}\int_{B_{r_{i}}(y)}|z_{n}^{i-1}|^{2}dx$ for sufficiently large $n$, $r_{i}\geq \max\left\{i,r_{i-1}\right\}$ if $i\geq 1$, and \begin{equation*}
c_{i}=\frac{3}{4}\lim_{r\rightarrow\infty}
\limsup_{n\rightarrow\infty}\sup_{y\in \mathbb{R}^{N}}\int_{B_{r}(y)}|z_{n}^{i-1}|^{2}dx>0.
\end{equation*}

Let $\left\{u_{n}\right\}\subset H^{1}(\mathbb{R}^{N})$ be a bounded sequence, passing to a subsequence we may assume
that $\lim_{n\rightarrow \infty}\|\nabla u_{n}\|_{2}^{2}$ exists and
\begin{equation*}
u_{n}\rightharpoonup \bar{u}_{0}\, \text{ in }\, H^{1}(\mathbb{R}^{N}),
\end{equation*}
\begin{equation*}
u_{n}\chi_{B_{n}(0)}\rightarrow \bar{u}_{0}\,\text{ in }\, L^{2}(\mathbb{R}^{N}),
\end{equation*}
where $\chi_{B_{n}(0)}$ is characteristic function of $B_{n}(0)$. Take $z_{n}^{0}:=u_{n}-\bar{u}_{0}$ and if
\begin{equation*}
\lim_{n\rightarrow\infty}\sup_{y\in \mathbb{R}^{N}}\int_{B_{r}(y)}|z_{n}^{0}|^{2}dx=0
\end{equation*}
for each $r\geq 1$, then we can complete the proof of our claim with $K=0$. Otherwise we have
\begin{equation*}
0<c_{1}:=\frac{3}{4}\lim_{r\rightarrow \infty}\limsup_{n\rightarrow\infty}\sup_{y\in \mathbb{R}^{N}}\int_{B_{r}(y)}|z_{n}^{0}|^{2}dx\leq
\sup_{n\geq 1}\int_{\mathbb{R}^{N}}|z_{n}^{0}|^{2}dx<\infty,
\end{equation*}
and there exists $r_{1}\geq 1$ and, passing to a subsequence, we can find $\left\{y_{n}^{1}\right\}\subset \mathbb{R}^{N}$ such that
\begin{equation}\label{e2-11}
\int_{B_{r_{1}}(y_{n}^{1})}|z_{n}^{0}|^{2}dx\geq c_{1}\geq \frac{1}{2}\sup_{y\in\mathbb{R}^{N}}\int_{B_{r_{1}}
(y)}|z_{n}^{0}|^{2}dx.
\end{equation}
Notice that $\left\{y_{n}^{1}\right\}$ is unbounded and we may suppose that $|y_{n}^{1}|\geq n-r_{1}$. Since $\left\{u_{n}(\cdot+y_{n}^{1})\right\}$ is bounded in $H^{1}(\mathbb{R}^{N})$, up to a subsequence, there exists $\bar{u}_{1}\in H^{1}(\mathbb{R}^{N})$ such that
\begin{equation*}
u_{n}(\cdot+y_{n}^{1})\rightharpoonup \bar{u}_{1}\, \text{ in }\, H^{1}(\mathbb{R}^{N}).
\end{equation*}
It follows from (\ref{e2-11}) that $\bar{u}_{1}\neq 0$, and we can assume that $u_{n}(\cdot+y_{n}^{1})\chi_{B_{n}(0)}\rightarrow \bar{u}_{1}$ in $L^{2}(\mathbb{R}^{N})$. Since
\begin{equation*}
\lim_{n\rightarrow\infty}\left(\int_{\mathbb{R}^{N}}
|\nabla(u_{n}-\bar{u}_{0})(\cdot+y_{n}^{1})|^{2}dx-
\int_{\mathbb{R}^{N}}|\nabla z_{n}^{1}(\cdot+y_{1}^{1})|^{2}dx\right)
=\int_{\mathbb{R}^{N}}|\nabla \bar{u}_{1}|^{2}dx,
\end{equation*}
where $z_{n}^{1}:=z_{n}^{0}-\bar{u}_{1}(\cdot-y_{1}^{1})=
u_{n}-\bar{u}_{0}-\bar{u}_{1}(\cdot-y_{1}^{1})$. Then we get
\begin{equation*}
\lim_{n\rightarrow\infty}\int_{\mathbb{R}^{N}}|\nabla u_{n}|^{2}dx=\int_{\mathbb{R}^{N}}|\nabla \bar{u}_{0}|^{2}dx+\int_{\mathbb{R}^{N}}|\nabla \bar{u}_{1}|^{2}dx+\lim_{n\rightarrow\infty}
\int_{\mathbb{R}^{N}}|\nabla z_{n}^{1}|^{2}dx.
\end{equation*}
If
\begin{equation*}
\lim_{n\rightarrow\infty}\sup_{y\in\mathbb{R}^{N}}
\int_{B_{r}(y)}|z_{n}^{1}|^{2}dx=0
\end{equation*}
for each $r\geq \max\left\{2,r_{1}\right\}$, then we can complete the proof of our claim with $K=1$. Otherwise we have
\begin{equation*}
c_{2}:=\frac{3}{4}\lim_{r\rightarrow \infty}\limsup_{n\rightarrow\infty}\sup_{y\in \mathbb{R}^{N}}\int_{B_{r}(y)}|z_{n}^{1}|^{2}dx>0.
\end{equation*}
Then there is $r_{2}\geq \max\left\{2,r_{1}\right\}$ and, passing to
a subsequence, we find $\left\{y_{n}^{2}\right\}\subset \mathbb{R}^{N}$ such that
\begin{equation}\label{e2-12}
\int_{B_{r_{2}}(y_{n}^{2})}|z_{n}^{1}|^{2}dx\geq c_{2}\geq \frac{1}{2}\sup_{y\in \mathbb{R}^{N}}\int_{B_{r_{2}}(y)}|z_{n}^{1}|^{2}dx
\end{equation}
and $|y_{n}^{2}|\geq n-r_{2}$. Moreover, $|y_{n}^{2}-y_{n}^{1}|\geq n-r_{2}-r_{1}$. Otherwise, $B_{r_{2}}(y_{n}^{2})\subset B_{n}(y_{n}^{1})$ and the convergence $u_{n}(\cdot+y_{n}^{1})\chi_{B_{n}(0)}\rightarrow \bar{u}_{1}$ in $L^{2}(\mathbb{R}^{N})$, which is a contradiction with (\ref{e2-12}). Then, passing to a subsequence, we find $\bar{u}_{2}\neq 0$ such that
\begin{equation*}
z_{n}^{1}(\cdot+y_{n}^{2}), u_{n}(\cdot+y_{n}^{2})\rightharpoonup \bar{u}_{2}\, \text{ in }\, H^{1}(\mathbb{R}^{N}),
\end{equation*}
\begin{equation*}
u_{n}(\cdot+y_{n}^{2})\chi_{B_{n}(0)}\rightarrow \bar{u}_{2}\,\text{ in }\, L^{2}(\mathbb{R}^{N}).
\end{equation*}
Similarly, if
\begin{equation*}
\lim_{n\rightarrow\infty}\sup_{y\in\mathbb{R}^{N}}
\int_{B_{r}(y)}|z_{n}^{2}|^{2}dx=0,
\end{equation*}
for each $r\geq \max\left\{3,r_{2}\right\}$, where $z_{n}^{2}:=z_{n}^{1}-\bar{u}_{2}(\cdot-y_{n}^{2})$, then can finish the proof of our claim with $K=2$. Continuing the above procedure, for each $i\geq 1$, we can find a subsequence of $\left\{u_{n}\right\}$, still denoted by $\left\{u_{n}\right\}$, satisfies $(a)$-$(f)$ and (\ref{e7-2}). Similarly as above, if there exists $i\geq 0$ such that
\begin{equation*}
\lim_{n\rightarrow\infty}\sup_{y\in\mathbb{R}^{N}}
\int_{B_{r}(y)}|z_{n}^{i}|^{2}dx=0
\end{equation*}
for each $r\geq\max\left\{n,r_{i-1}\right\}$, then $K=i$ and we complete proof of the claim. Otherwise, $K=\infty$, by using standard diagonal method and passing to a subsequence, we show that
$(a)$-$(f)$ and (\ref{e7-2}) are satisfied for every $i\geq 0$.

Next, we prove (\ref{e7-5}). Without loss of generality, we assume that $\bar{u}_{i}$ is continuous and compactly supported. For every $i\neq j$, the sequence $\left\{ y_{n}^{i}\right\}
_{i=0}^{\infty }$ satisfies $|y_{n}^{i}-y_{n}^{j}|\rightarrow \infty $ as $%
n\rightarrow \infty $, then we call this sequence $\left\{ y_{n}^{i}\right\}
_{i=0}^{\infty }\subset \mathbb{R}^{3}$ satisfying the orthogonality
condition. Since the sequence $\left\{ u_{n}\right\} $ can be written, up to a
subsequence, as
\begin{equation*}
u_{n}=\sum\limits_{j=0}^{i}\bar{u}_{j}(\cdot -y_{n}^{j})+z_{n}^{i},
\end{equation*}%
and $\limsup_{n\rightarrow \infty }\int_{\mathbb{R}^{3}}|z_{n}^{i}|^{q}dx%
\rightarrow 0$ as $i\rightarrow \infty $, we claim that
\begin{equation}
B_{\gamma }(u_{n})\rightarrow B_{\gamma }\left( \sum\limits_{j=0}^{i}\bar{u}%
_{j}(\cdot -y_{n}^{j})\right) \ \text{as}\ n\rightarrow \infty \ \text{and}\
i\rightarrow \infty .  \label{e7-10}
\end{equation}%
Taking $\bar{u}_{j,n}=\bar{u}_{j}(\cdot -y_{n}^{j})$, to prove (\ref{e7-10})
is equivalent to prove that
\begin{equation}
\int_{\mathbb{R}^{3}}\int_{\mathbb{R}^{3}}\frac{|u_{n}|^{2}|u_{n}|^{2}}{%
|x-y|^{\gamma }}dxdy\rightarrow \int_{\mathbb{R}^{3}}\int_{\mathbb{R}^{3}}%
\frac{\left\vert \sum\limits_{j=0}^{i}\bar{u}_{j,n}\right\vert
^{2}\left\vert \sum\limits_{j=0}^{i}\bar{u}_{j,n}\right\vert ^{2}}{%
|x-y|^{\gamma }}dxdy,  \label{e7-11}
\end{equation}%
as $n\rightarrow \infty $ and $i\rightarrow \infty $. Indeed, to prove (\ref%
{e7-11}), we only need to obtain the following estimates:
\begin{equation}
\lim_{i\rightarrow \infty }\limsup_{n\rightarrow \infty }\int_{\mathbb{R}%
^{3}}\int_{\mathbb{R}^{3}}\frac{|z_{n}^{i}|^{2}|z_{n}^{i}|^{2}}{%
|x-y|^{\gamma }}dxdy=0,  \label{e7-14}
\end{equation}%
\begin{equation}
\lim_{i\rightarrow \infty }\limsup_{n\rightarrow \infty }\int_{\mathbb{R}%
^{3}}\int_{\mathbb{R}^{3}}\frac{|\bar{u}_{j,n}|^{2}|z_{n}^{i}|^{2}}{%
|x-y|^{\gamma }}dxdy=0,  \label{e7-15}
\end{equation}%
\begin{equation}
\lim_{i\rightarrow \infty }\limsup_{n\rightarrow \infty }\int_{\mathbb{R}%
^{3}}\int_{\mathbb{R}^{3}}\frac{|\bar{u}_{j,n}|^{3}|z_{n}^{i}|}{%
|x-y|^{\gamma }}dxdy=0,  \label{e7-16}
\end{equation}%
\begin{equation}
\lim_{i\rightarrow \infty }\limsup_{n\rightarrow \infty }\int_{\mathbb{R}%
^{3}}\int_{\mathbb{R}^{3}}\frac{|\bar{u}_{j,n}||z_{n}^{i}|^{3}}{%
|x-y|^{\gamma }}dxdy=0.  \label{e7-17}
\end{equation}%
It follows from Lemma \ref{L2-10} that
\begin{equation*}
\int_{\mathbb{R}^{3}}\int_{\mathbb{R}^{3}}\frac{%
|z_{n}^{i}|^{2}|z_{n}^{i}|^{2}}{|x-y|^{\gamma }}dxdy\leq \mathcal{C}%
_{HLS}\Vert z_{n}^{i}\Vert _{\frac{4N}{2N-\gamma }}^{2}\Vert z_{n}^{i}\Vert
_{\frac{4N}{2N-\gamma }}^{2}\leq \mathcal{C}_{HLS}\Vert z_{n}^{i}\Vert _{%
\frac{4N}{2N-\gamma }}^{4}.
\end{equation*}%
Since $\limsup_{n\rightarrow \infty }\int_{\mathbb{R}^{3}}|z_{n}^{i}|^{q}dx%
\rightarrow 0$ as $i\rightarrow \infty $, we have $\lim_{i\rightarrow \infty
}\limsup_{n\rightarrow \infty }\Vert z_{n}^{i}\Vert _{\frac{4N}{2N-\gamma }%
}^{4}=0,$ and thus (\ref{e7-14}) is true. By Lemma \ref{L2-10} and Sobolev
embedding theorem, we have
\begin{equation*}
\int_{\mathbb{R}^{3}}\int_{\mathbb{R}^{3}}\frac{|\bar{u}%
_{j,n}|^{2}|z_{n}^{i}|^{2}}{|x-y|^{\gamma }}dxdy\leq \mathcal{C}_{HLS}\Vert
\bar{u}_{j,n}\Vert _{\frac{4N}{2N-\gamma }}^{2}\Vert z_{n}^{i}\Vert _{\frac{%
4N}{2N-\gamma }}^{2}\leq C\Vert \bar{u}_{j,n}\Vert _{H^{1}}^{2}\Vert
z_{n}^{i}\Vert _{\frac{4N}{2N-\gamma }}^{2},
\end{equation*}%
which implies that (\ref{e7-15}) holds, since $\Vert \bar{u}_{j,n}\Vert
_{H^{1}}$ is bounded. For (\ref{e7-16}) and (\ref{e7-17}), we have
\begin{equation*}
\int_{\mathbb{R}^{3}}\int_{\mathbb{R}^{3}}\frac{|\bar{u}%
_{j,n}|^{3}|z_{n}^{i}|}{|x-y|^{\gamma }}dxdy\leq \mathcal{C}_{HLS}\Vert \bar{%
u}_{j,n}\Vert _{\frac{4N}{2N-\gamma }}^{3}\Vert z_{n}^{i}\Vert _{\frac{4N}{%
2N-\gamma }}\leq C\Vert \bar{u}_{j,n}\Vert _{H^{1}}^{3}\Vert z_{n}^{i}\Vert
_{\frac{4N}{2N-\gamma }},
\end{equation*}%
and
\begin{equation*}
\int_{\mathbb{R}^{3}}\int_{\mathbb{R}^{3}}\frac{|\bar{u}%
_{j,n}||z_{n}^{i}|^{3}}{|x-y|^{\gamma }}dxdy\leq \mathcal{C}_{HLS}\Vert \bar{%
u}_{j,n}\Vert _{\frac{4N}{2N-\gamma }}\Vert z_{n}^{i}\Vert _{\frac{4N}{%
2N-\gamma }}^{3}\leq C\Vert \bar{u}_{j,n}\Vert _{H^{1}}\Vert z_{n}^{i}\Vert
_{\frac{4N}{2N-\gamma }}^{3}.
\end{equation*}%
Similar to the arguments of (\ref{e7-14}) and (\ref{e7-15}), we easily
obtain (\ref{e7-16}) and (\ref{e7-17}). So (\ref{e7-11}) holds, and thus (%
\ref{e7-10}) holds.

Finally, to obtain (\ref{e7-5}), it is sufficient to prove that
\begin{equation*}
B_{\gamma}\left(\sum\limits_{j=0}^{\infty}\bar{u}_{j}(\cdot-y_{n}^{j})%
\right)= \sum\limits_{j=0}^{\infty}B_{\gamma}(\bar{u}_{j}(%
\cdot-y_{n}^{j}))+o_{n}(1).
\end{equation*}
By the pairwise orthogonality of the family $\left\{y_{n}^{i}\right%
\}_{i=0}^{\infty}$, and the following the elementary inequality \cite{G}
\begin{equation*}
\left|\left|\sum\limits_{j=0}^{\infty}a_{j}\right|^{2}
-\sum\limits_{j=0}^{\infty}|a_{j}|^{2}\right|\leq C\sum\limits_{j\neq
k}|a_{j}||a_{k}|,
\end{equation*}
we have
\begin{eqnarray*}
&&\left|\int_{\mathbb{R}^{3}}\int_{\mathbb{R}^{3}}\frac{|\sum\limits_{j=0}^{%
\infty}\bar{u}_{j}(x-y_{n}^{j}) |^{2}|\sum\limits_{j=0}^{\infty}\bar{u}%
_{j}(y-y_{n}^{j})|^{2}}{|x-y|^{\gamma}}dxdy-\sum\limits_{j=0}^{\infty}\int_{%
\mathbb{R}^{3}}\int_{\mathbb{R}^{3}} \frac{|\bar{u}_{j}(x-y_{n}^{j})|^{2}|%
\bar{u}_{j}(y-y_{n}^{j})|^{2}}{|x-y|^{\gamma}}dxdy\right|
\end{eqnarray*}
\begin{eqnarray}
&\leq&\sum\limits_{j=0}^{\infty}\sum\limits_{k\neq j}\int_{\mathbb{R}%
^{3}}\int_{\mathbb{R}^{3}} \frac{|\bar{u}_{j}(x-y_{n}^{j})||\bar{u}%
_{k}(x-y_{n}^{k})||\sum\limits_{m=0}^{\infty} \bar{u}_{m}(y-y_{n}^{m})|^{2}}{%
|x-y|^{\gamma}}dxdy  \label{e7-6} \\
&&+\sum\limits_{j=0}^{\infty}\sum\limits_{k\neq j}\int_{\mathbb{R}^{3}}\int_{%
\mathbb{R}^{3}} \frac{|\bar{u}_{j}(y-y_{n}^{j})||\bar{u}_{k}(y-y_{n}^{k})||%
\sum\limits_{m=0}^{\infty} \bar{u}_{m}(x-y_{n}^{m})|^{2}}{|x-y|^{\gamma}}dxdy
\label{e7-7} \\
&&+\sum\limits_{j=0}^{\infty}\sum\limits_{k\neq j}\int_{\mathbb{R}^{3}}\int_{%
\mathbb{R}^{3}} \frac{|\bar{u}_{j}(x-y_{n}^{j})|^{2}|\bar{u}%
_{k}(y-y_{n}^{k})|^{2}}{|x-y|^{\gamma}}dxdy .  \label{e7-8}
\end{eqnarray}
Next, we estimate (\ref{e7-6})--(\ref{e7-8}), respectively. It follows from
Lemma \ref{L2-10} and orthogonality that
\begin{equation*}
(\ref{e7-6})\leq \sum\limits_{j=0}^{\infty}\sum\limits_{k\neq j}\left\||\bar{%
u}_{j}(\cdot-y_{n}^{j})||\bar{u}_{k}(\cdot-y_{n}^{k})|\right\|_{\frac{2N}{%
2N-\gamma}} \left\|\left|\sum\limits_{m=0}^{\infty}\bar{u}%
_{m}(\cdot-y_{n}^{m})\right|^{2}\right\|_{\frac{2N}{2N-\gamma}}\rightarrow
0\ \text{ as }\, n\rightarrow\infty.
\end{equation*}
Similarly, we get $(\ref{e7-7})\rightarrow 0$ as $n\rightarrow\infty$. For (%
\ref{e7-8}), by using the transformation $\tilde{x}:=x-y_{n}^{j}$ and $%
\tilde{y}:=y-y_{n}^{j}$, we have
\begin{eqnarray*}
(\ref{e7-8})&=&\sum\limits_{j=0}^{\infty}\sum\limits_{k\neq j}\int_{\mathbb{R%
}^{3}}\int_{\mathbb{R}^{3}} \frac{|\bar{u}_{j}(\tilde{x})|^{2}|\bar{u}_{k}(%
\tilde{y}-(y_{n}^{k}-y_{n}^{j}))|^{2}}{|\tilde{y}-\tilde{x}|^{\gamma}}d%
\tilde{x}d\tilde{y} \\
&\leq&\sum\limits_{j=0}^{\infty}\sum\limits_{k\neq j}\|\bar{u}_{j}\|_{\frac{%
4N}{2N-\gamma}}^{2}\|\bar{u}_{k}(\cdot-(y_{n}^{k}-y_{n}^{j}))\|_{\frac{4N}{%
2N-\gamma}}^{2}\rightarrow 0\ \text{ as }\ n\rightarrow\infty.
\end{eqnarray*}
Hence, we get
\begin{eqnarray*}
\int_{\mathbb{R}^{3}}\int_{\mathbb{R}^{3}}\frac{|\sum\limits_{j=0}^{\infty}%
\bar{u}_{j}(x-y_{n}^{j}) |^{2}|\sum\limits_{j=0}^{\infty}\bar{u}%
_{j}(y-y_{n}^{j})|^{2}}{|x-y|^{\gamma}}dxdy\rightarrow\sum\limits_{j=0}^{%
\infty}\int_{\mathbb{R}^{3}}\int_{\mathbb{R}^{3}} \frac{|\bar{u}%
_{j}(x-y_{n}^{j})|^{2}|\bar{u}_{j}(y-y_{n}^{j})|^{2}}{|x-y|^{\gamma}}dxdy
\end{eqnarray*}
as $n\rightarrow\infty.$ Therefore, (\ref{e7-5}) holds. We complete the
proof.
\end{proof}

\section{Multiplicity of normalized solutions}

\subsection{The local minimizer with positive level}

Define a subset $\mathcal{V}_{D}\subset D(c)$ by%
\begin{equation*}
\mathcal{V}_{D}:=\left\{ u\in D(c):\Gamma A(u)^{\frac{\alpha }{2}}<B_{\alpha
}(u)\right\} ,
\end{equation*}%
where $D(c)$ is as (\ref{e1-3}) and%
\begin{equation*}
\Gamma =\frac{2(\beta -2)}{\beta -\alpha }\left[ \frac{|\mu _{\beta }|\beta
(\beta -\alpha )\mathcal{S}_{\beta }c^{\frac{4-\beta }{2}}}{4(\alpha -2)}%
\right] ^{\frac{\alpha -2}{\beta -2}}.
\end{equation*}%
Similarly, we define a subset $\mathcal{V}_{S}\subset S(c)$ by%
\begin{equation*}
\mathcal{V}_{S}:=\left\{ u\in S(c):\Gamma A(u)^{\frac{\alpha }{2}}<B_{\alpha
}(u)\right\} .
\end{equation*}%
Clearly, $\mathcal{V}_{S}\subset \mathcal{V}_{D}$. To show that the set $%
\mathcal{V}_{D}$ is not empty, we consider the following problem:%
\begin{equation}\label{e6-5}
\left\{
\begin{array}{ll}
-\Delta u+\lambda u=\left( |x|^{-\alpha }\ast u^{2}\right) u & \text{ in }%
\mathbb{R}^{N}, \\
\int_{\mathbb{R}^{N}}|u|^{2}dx=c>0, &
\end{array}%
\right.   \tag{$HE_{\infty }$}
\end{equation}%
where $N\geq 3$ and $2<\alpha <\min \left\{ N,4\right\} $. From \cite{L1},
we know that (\ref{e6-5}) admits a normalized ground state $\omega _{0}$
with positive energy $m_{\infty }$. Then $m_{\infty }=\inf_{u\in \mathbf{N}%
(c)}E_{\infty }(u)=E_{\infty }(\omega _{0})>0,$ where $E_{\infty }$ is the
energy functional related to (\ref{e6-5}) given by
\begin{equation*}
E_{\infty }(u)=\frac{1}{2}\int_{\mathbb{R}^{N}}|\nabla u|^{2}dx-\frac{1}{4}%
\int_{\mathbb{R}^{N}}\left( |x|^{-\alpha }\ast u^{2}\right) u^{2}dx
\end{equation*}%
and
\begin{equation*}
\mathbf{N}(c)=\left\{ u\in S(c):Q_{\infty }(u)=0\right\}
\end{equation*}%
with $Q_{\infty }(u)=A(u)-\frac{\alpha }{4}B_{\alpha }(u)$. Clearly, it
holds
\begin{equation*}
m_{\infty }=\frac{1}{2}A(\omega _{0})-\frac{1}{4}B_{\alpha }(\omega _{0})=%
\frac{\alpha -2}{2\alpha }A\left( \omega _{0}\right) >0.
\end{equation*}%
For
\begin{equation*}
0<|\mu _{\beta }|<\frac{\alpha (\beta -2)}{\beta (\beta -\alpha )}\left(
\frac{2}{\alpha }\right) ^{\frac{\beta -2}{\alpha -2}}\left( \frac{\alpha
(\beta -2)}{\beta (\alpha -2)}\right) ^{\frac{\beta -2}{2}}\mu _{\ast },
\end{equation*}%
we have $B_{\alpha }(\omega _{0})=\frac{4}{\alpha }A(\omega _{0})=\frac{%
8m_{\infty }}{\alpha -2}>\Gamma A(\omega _{0})^{\frac{\alpha }{2}},$ which
implies that $\omega _{0}\in \mathcal{V}_{S},$ namely, $\mathcal{V}_{D}$ is
also nonempty.

For each $u\in \mathcal{V}_{D}$ and $s>0$, a direct calculation shows that $%
\left\Vert u_{s}\right\Vert _{2}^{2}=\left\Vert u\right\Vert _{2}^{2}\leq c$%
, and
\begin{equation*}
A\left( u_{s}\right) =s^{2}A\left( u\right) ,\quad B_{\alpha }\left(
s_{t}\right) =s^{\alpha }B_{\alpha }\left( u\right) ,\quad B_{\beta }\left(
u_{s}\right) =s^{\beta }B_{\beta }\left( u\right) ,
\end{equation*}%
where $u_{s}$ is as (\ref{e1-4}). Moreover, there hold
\begin{equation*}
g_{u}^{\prime }\left( s\right) =sA\left( u\right) -\frac{\alpha s^{\alpha -1}%
}{4}B_{\alpha }\left( u\right) -\frac{\mu _{\beta }\beta s^{\beta -1}}{4}%
B_{\beta }\left( u\right)
\end{equation*}%
and
\begin{equation*}
g_{u}^{\prime \prime }\left( s\right) =A\left( u\right) -\frac{\alpha
(\alpha -1)s^{\alpha -2}}{4}B_{\alpha }\left( u\right) -\frac{\mu _{\beta
}\beta (\beta -1)s^{\beta -2}}{4}B_{\beta }\left( u\right) ,
\end{equation*}%
where $g_{u}$ is the fibering map defined as (\ref{e1-5}). Notice that
\begin{equation*}
\frac{d}{ds}E\left( u_{s}\right) =\frac{Q\left( u_{s}\right) }{s},
\end{equation*}%
where
\begin{equation*}
Q\left( u\right) :=\frac{d}{ds}|_{s=1}E\left( u_{s}\right) =A\left( u\right)
-\frac{\alpha }{4}B_{\alpha }\left( u\right) -\frac{\mu _{\beta }\beta }{4}%
B_{\beta }\left( u\right) .
\end{equation*}%
Actually the condition $Q\left( u\right) =0$ corresponds to a Pohozaev type
identity, and the set
\begin{equation*}
\mathcal{P}_{D}\left( c\right) :=\left\{ u\in \mathcal{V}_{D}:Q\left(
u\right) =0\right\} =\left\{ u\in \mathcal{V}_{D}:g_{u}^{\prime }\left(
1\right) =0\right\}
\end{equation*}%
appears as a natural constraint. We also recognize that for any $u\in
\mathcal{V}_{D}$, the dilated function $u_{s}\left( x\right) =s^{N/2}u\left(
sx\right) $ belongs to the constraint manifold $\mathcal{P}_{D}\left(
c\right) $ if and only if $s\in \mathbb{R}$ is a critical value of the
fibering map $s\in \left( 0,\infty \right) \mapsto g_{u}\left( s\right) $,
namely, $g_{u}^{\prime }\left( s\right) =0$. Thus, it is natural split $%
\mathcal{P}_{D}\left( c\right) $ into three parts corresponding to local
minima, local maxima and points of inflection. Following \cite{T}, we define
\begin{eqnarray*}
\mathcal{P}_{D}^{+}\left( c\right) &=&\left\{ u\in \mathcal{V}%
_{D}:g_{u}^{\prime }\left( 1\right) =0,g_{u}^{\prime \prime }\left( 1\right)
>0\right\} ; \\
\mathcal{P}_{D}^{-}\left( c\right) &=&\left\{ u\in \mathcal{V}%
_{D}:g_{u}^{\prime }\left( 1\right) =0,g_{u}^{\prime \prime }\left( 1\right)
<0\right\} ; \\
\mathcal{P}_{D}^{0}\left( c\right) &=&\left\{ u\in \mathcal{V}%
_{D}:g_{u}^{\prime }\left( 1\right) =0,g_{u}^{\prime \prime }\left( 1\right)
=0\right\} .
\end{eqnarray*}%
So, $\mathcal{P}_{D}\left( c\right) =\mathcal{P}_{D}^{+}\left( c\right) \cup
\mathcal{P}_{D}^{0}\left( c\right) \cup \mathcal{P}_{D}^{-}\left( c\right) .$
Thus for $u\in \mathcal{P}_{D}\left( c\right) $, we have
\begin{equation}
g_{u}^{\prime \prime }\left( 1\right) =(2-\alpha )A(u)-\frac{\mu _{\beta
}\beta (\beta -\alpha )}{4}B_{\beta }(u)=(2-\beta )A(u)+\frac{\alpha (\beta
-\alpha )}{4}B_{\alpha }(u).\label{e2-7}
\end{equation}%
Similarly, we define $\mathcal{P}_{S}\left( c\right) :=\mathcal{P}%
_{S}^{+}\left( c\right) \cup \mathcal{P}_{S}^{0}\left( c\right) \cup
\mathcal{P}_{S}^{-}\left( c\right) $, where
\begin{eqnarray*}
\mathcal{P}_{S}^{+}\left( c\right) &=&\left\{ u\in \mathcal{V}%
_{S}:g_{u}^{\prime }\left( 1\right) =0,g_{u}^{\prime \prime }\left( 1\right)
>0\right\} ; \\
\mathcal{P}_{S}^{-}\left( c\right) &=&\left\{ u\in \mathcal{V}%
_{S}:g_{u}^{\prime }\left( 1\right) =0,g_{u}^{\prime \prime }\left( 1\right)
<0\right\} ; \\
\mathcal{P}_{S}^{0}\left( c\right) &=&\left\{ u\in \mathcal{V}%
_{S}:g_{u}^{\prime }\left( 1\right) =0,g_{u}^{\prime \prime }\left( 1\right)
=0\right\} ,
\end{eqnarray*}%
and define $\mathcal{P}_{D\backslash S}\left( c\right) :=\mathcal{P}%
_{D\backslash S}^{+}\left( c\right) \cup \mathcal{P}_{D\backslash
S}^{0}\left( c\right) \cup \mathcal{P}_{S}^{-}\left( c\right) $, where%
\begin{eqnarray*}
\mathcal{P}_{D\backslash S}^{+}\left( c\right) &=&\left\{ u\in \mathcal{V}%
_{D\backslash S}:g_{u}^{\prime }\left( 1\right) =0,g_{u}^{\prime \prime
}\left( 1\right) >0\right\} ; \\
\mathcal{P}_{D\backslash S}^{-}\left( c\right) &=&\left\{ u\in \mathcal{V}%
_{D\backslash S}:g_{u}^{\prime }\left( 1\right) =0,g_{u}^{\prime \prime
}\left( 1\right) <0\right\} ; \\
\mathcal{P}_{D\backslash S}^{0}\left( c\right) &=&\left\{ u\in \mathcal{V}%
_{D\backslash S}:g_{u}^{\prime }\left( 1\right) =0,g_{u}^{\prime \prime
}\left( 1\right) =0\right\} .
\end{eqnarray*}

\begin{lemma}
\label{L3-2} Assume that $\mu_{\beta}<0$ and one of the two following conditions hold:\newline
$(i)$ $N\geq 3$ and $2<\alpha <\beta <\min \left\{ N,4\right\};$\newline
$(ii)$ $N\geq 5$ and $2<\alpha<\beta=4$.\newline
Then the energy functional $E$ is bounded from
below by a positive constant and coercive on $\mathcal{P}_{D}^{-}(c).$
\end{lemma}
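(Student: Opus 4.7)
The plan is to exploit the two Pohozaev-type relations encoded by $g_u'(1)=0$ and $g_u''(1)<0$, together with the sign $\mu_\beta<0$ and the Gagliardo--Nirenberg inequality from Lemma \ref{L2-1}, in order to sandwich $E(u)$ between a positive multiple of $A(u)$ from below and an explicit upper bound on $A(u)$ from above.

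First, for any $u\in\mathcal{P}_D^-(c)$, the identity $Q(u)=0$ lets me solve for $\mu_\beta B_\beta(u)$ and eliminate it from $E(u)$, producing
\[ E(u)=\frac{\beta-2}{2\beta}A(u)-\frac{\beta-\alpha}{4\beta}B_\alpha(u). \]
Next, the condition $g_u''(1)<0$ combined with identity \eqref{e2-7} yields
\[ B_\alpha(u)<\frac{4(\beta-2)}{\alpha(\beta-\alpha)}A(u). \]
Substituting this into the formula for $E(u)$ gives
\[ E(u)>\frac{(\beta-2)(\alpha-2)}{2\alpha\beta}A(u), \]
which is a positive multiple of $A(u)$ because $\alpha,\beta>2$.

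The remaining task is a uniform positive lower bound on $A(u)$. Since $\mu_\beta<0$, the identity $A(u)=\tfrac{\alpha}{4}B_\alpha(u)+\tfrac{\mu_\beta\beta}{4}B_\beta(u)$ coming from $Q(u)=0$ forces $A(u)\leq \tfrac{\alpha}{4}B_\alpha(u)$; combined with Lemma \ref{L2-1} and $\|u\|_2^2\leq c$ this gives
\[ A(u)\leq \frac{\alpha}{4}\mathcal{S}_\alpha A(u)^{\alpha/2}c^{(4-\alpha)/2}, \]
and since $\alpha>2$ this inverts into an explicit positive lower bound on $A(u)$ depending only on $\alpha$, $c$, and $\mathcal{S}_\alpha$. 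Inserting this bound into the previous inequality produces the uniform positive lower bound $E(u)\geq C(\alpha,\beta,c)>0$.

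For coercivity, I would pair the defining inequality $\Gamma A(u)^{\alpha/2}<B_\alpha(u)$ of $\mathcal{V}_D$ with the upper bound $B_\alpha(u)<\tfrac{4(\beta-2)}{\alpha(\beta-\alpha)}A(u)$ derived above; using $\alpha>2$ once more, this yields a uniform \emph{upper} bound on $A(u)$ on $\mathcal{P}_D^-(c)$, so that together with $\|u\|_2^2\leq c$ the whole manifold is bounded in $H^1(\mathbb{R}^N)$ and $E$ is therefore coercive on it. The delicate point of the argument is just that the two Pohozaev-level conditions $g_u'(1)=0$ and $g_u''(1)<0$ must interlock to give two-sided control of the ratio $B_\alpha(u)/A(u)$; this is exactly where the hypotheses $\mu_\beta<0$ and $\alpha>2$ are used. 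The energy-critical case $\beta=4$ is handled identically, since the Gagliardo--Nirenberg inequality still supplies the required estimate on $B_\alpha(u)$ because $\alpha<4$.
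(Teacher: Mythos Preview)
Your argument is correct and essentially parallels the paper's proof: both derive the key inequality $E(u)>\frac{(\alpha-2)(\beta-2)}{2\alpha\beta}A(u)$ from $Q(u)=0$ together with $g_u''(1)<0$, and both obtain the lower bound on $A(u)$ by combining $\mu_\beta<0$ with the Gagliardo--Nirenberg inequality. The only cosmetic difference is that the paper eliminates $B_\alpha(u)$ from $E(u)$ and uses the $B_\beta$-form of \eqref{e2-7}, whereas you eliminate $B_\beta(u)$ and use the $B_\alpha$-form; the two routes are dual and land on the identical inequality.

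One remark on your coercivity step: once you have $E(u)>\frac{(\alpha-2)(\beta-2)}{2\alpha\beta}A(u)$, coercivity is immediate (if $A(u)\to\infty$ along the manifold, then $E(u)\to\infty$), so there is no need to invoke the $\mathcal{V}_D$ inequality $\Gamma A(u)^{\alpha/2}<B_\alpha(u)$ at all. Your detour does correctly show that $\mathcal{P}_D^-(c)$ is in fact bounded in $H^1$, which is a stronger (and true) statement, but it is not needed for the lemma as stated.
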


\begin{proof}
For $u\in \mathcal{P}_{D}^{-}\left( c\right) $, we have
\begin{equation}
A\left( u\right) -\frac{\alpha }{4}B_{\alpha }\left( u\right) -\frac{\mu
_{\beta }\beta }{4}B_{\beta }\left( u\right) =0.  \label{e3-15}
\end{equation}%
Moreover, it follows from (\ref{e2-3}) that
\begin{equation*}
A\left( u\right) \leq A\left( u\right) -\frac{\mu _{\beta }\beta }{4}%
B_{\beta }\left( u\right) =\frac{\alpha }{4}B_{\alpha }\left( u\right) \leq
\frac{\alpha \mathcal{S}_{\alpha }}{4}c^{\frac{4-\alpha }{2}}A(u)^{\frac{%
\alpha }{2}},
\end{equation*}%
which implies that
\begin{equation}
A\left( u\right) \geq \left[ \frac{4}{\alpha \mathcal{S}_{\alpha }c^{\frac{%
4-\alpha }{2}}}\right] ^{\frac{2}{\alpha -2}}.  \label{e2-8}
\end{equation}%
Since
\begin{equation*}
\left( \alpha -2\right) A\left( u\right) >\frac{|\mu _{\beta }|\beta (\beta
-\alpha )}{4}B_{\beta }(u),\label{e3-16}
\end{equation*}%
by (\ref{e3-15})--(\ref{e2-8}), we get
\begin{eqnarray*}
E\left( u\right) &=&\frac{1}{2}A\left( u\right) -\frac{1}{4}B_{\alpha
}\left( u\right) -\frac{\mu _{\beta }}{4}B_{\beta }\left( u\right) \\
&=&\frac{\alpha -2}{2\alpha }A\left( u\right) +\frac{\mu _{\beta }(\beta
-\alpha )}{4\alpha }B_{\beta }\left( u\right) \\
&>&\frac{(\alpha -2)(\beta -2)}{2\alpha \beta }A\left( u\right) \\
&\geq &\frac{(\alpha -2)(\beta -2)}{2\alpha \beta }\left( \frac{4}{\alpha
\mathcal{S}_{\alpha }c^{\frac{4-\alpha }{2}}}\right) ^{\frac{2}{\alpha -2}}
\\
&>&0,
\end{eqnarray*}%
which indicates that $E$ is bounded from below and coercive on $\mathcal{P}%
_{D}^{-}\left( c\right) $. We complete the proof.
\end{proof}
\begin{remark}
\label{R4-1}
In fact, if we define the set $\mathcal{P}^{-}\left( c\right) :=\left\{ \mathcal{P}\left( c\right):g_{u}^{\prime \prime }\left( 1\right)
<0\right\}$, where $\mathcal{P}\left( c\right)$ is the Pohozaev manifold as in (\ref{e1-10}), then the energy functional $E$ is also bounded from below by a positive constant on $\mathcal{P}^{-}(c)$ by using the argument of Lemma \ref{L3-2}.
\end{remark}

Set
\begin{equation*}
s_{u}^{\ast }:=\left[ \frac{4(\beta -2)A(u)}{\alpha (\beta -\alpha
)B_{\alpha }(u)}\right] ^{\frac{1}{\alpha -2}}\text{ and }\widehat{s}_{u}:=%
\left[ \frac{4A(u)}{\alpha B_{\alpha }(u)}\right] ^{\frac{1}{\alpha -2}}.
\end{equation*}

\begin{lemma}
\label{L4-3} Assume that $\mu_{\beta}<0$ and one of the two following conditions hold:\newline
$(i)$ $N\geq 3$ and $2<\alpha <\beta <\min \left\{ N,4\right\};$\newline
$(ii)$ $N\geq 5$ and $2<\alpha<\beta=4$.\newline
Then for each $u\in \mathcal{V}_{D}$ there
exist two positive constants $s_{\ast }^{+}(u)$ and $s_{\ast }^{-}(u)$
satisfying $\widehat{s}_{u}<s_{\ast }^{-}(u)<s_{u}^{\ast }<s_{\ast }^{+}(u)$
such that $u_{s_{\ast }^{\pm }(u)}\in \mathcal{P}_{D}^{\pm }(c)$.
\end{lemma}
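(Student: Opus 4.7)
The plan is to reduce the existence of the two critical points of $g_u$ to the existence of zeros of the auxiliary function
\[
\phi(s) \;:=\; \frac{g_u'(s)}{s} \;=\; A(u) - \frac{\alpha s^{\alpha-2}}{4}B_\alpha(u) + \frac{|\mu_\beta|\beta s^{\beta-2}}{4}B_\beta(u), \qquad s>0,
\]
where the sign condition $\mu_\beta<0$ makes the last term positive. First I would record the qualitative behaviour of $\phi$: one has $\phi(0^+)=A(u)>0$ and $\phi(s)\to+\infty$ as $s\to\infty$ (since $\beta>2$), while
\[
\phi'(s) \;=\; s^{\alpha-3}\!\left[-\tfrac{\alpha(\alpha-2)}{4}B_\alpha(u) + \tfrac{|\mu_\beta|\beta(\beta-2)}{4}s^{\beta-\alpha}B_\beta(u)\right]
\]
factors as $s^{\alpha-3}$ times an expression strictly increasing in $s$; hence $\phi$ admits a unique critical point on $(0,\infty)$, and it is the global minimum.

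The heart of the argument is to evaluate the sign of $\phi$ at two distinguished test points. A direct substitution of $\widehat{s}_u^{\alpha-2} = 4A(u)/[\alpha B_\alpha(u)]$ yields $\phi(\widehat{s}_u)=\tfrac{|\mu_\beta|\beta\widehat{s}_u^{\beta-2}}{4}B_\beta(u)>0$, while substituting $(s_u^*)^{\alpha-2} = \tfrac{4(\beta-2)A(u)}{\alpha(\beta-\alpha)B_\alpha(u)}$ gives
\[
\phi(s_u^*) \;=\; -\frac{(\alpha-2)A(u)}{\beta-\alpha} \;+\; \frac{|\mu_\beta|\beta(s_u^*)^{\beta-2}}{4}B_\beta(u).
\]
The main obstacle is to verify that $\phi(s_u^*)<0$; this is precisely the step in which the defining condition of $\mathcal{V}_D$ and the exact form of $\Gamma$ become decisive. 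Using Lemma \ref{L2-1} together with the admissible bound $\|u\|_2^2\le c$ to estimate $B_\beta(u)\le \mathcal{S}_\beta A(u)^{\beta/2}c^{(4-\beta)/2}$, expanding $(s_u^*)^{\beta-2}$ as a power of $A(u)/B_\alpha(u)$ and carefully matching exponents, the inequality $\phi(s_u^*)<0$ reduces to
\[
B_\alpha(u) \;>\; \tfrac{2}{\alpha}\,\Gamma\,A(u)^{\alpha/2},
\]
which is strictly weaker than the defining inequality $\Gamma A(u)^{\alpha/2} < B_\alpha(u)$ of $\mathcal{V}_D$ (because $2/\alpha<1$ since $\alpha>2$), and is therefore automatic.

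Combining $\phi(\widehat{s}_u)>0$, $\phi(s_u^*)<0$ with the unique-minimum property of $\phi$ forces $\phi$ to possess exactly two positive roots, which I label $s_*^-(u)\in(\widehat{s}_u,s_u^*)$ and $s_*^+(u)\in(s_u^*,\infty)$. It remains to verify that the scaled functions belong to the correct strata. The identity $g_{u_s}(t)=g_u(st)$ yields $g_{u_s}'(1)=sg_u'(s)$ and $g_{u_s}''(1)=s^2 g_u''(s)$, so $g_{u_{s_*^\pm(u)}}'(1)=0$ is automatic, and $u_{s_*^\pm(u)}\in\mathcal{V}_D$ follows from $\|u_s\|_2=\|u\|_2$ together with the scale invariance of the condition $\Gamma A(\cdot)^{\alpha/2}<B_\alpha(\cdot)$. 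Finally, the sign of $g_{u_{s_*^\pm(u)}}''(1)$ is read off from the second expression of the Pohozaev-type identity (\ref{e2-7}) applied to $u_s$, namely
\[
g_{u_s}''(1) \;=\; (2-\beta)s^2 A(u) \;+\; \tfrac{\alpha(\beta-\alpha)}{4}s^\alpha B_\alpha(u),
\]
which is negative precisely when $s<s_u^*$ and positive precisely when $s>s_u^*$; the chain $\widehat{s}_u<s_*^-(u)<s_u^*<s_*^+(u)$ thus places $u_{s_*^-(u)}$ in $\mathcal{P}_D^-(c)$ and $u_{s_*^+(u)}$ in $\mathcal{P}_D^+(c)$, which completes the argument.
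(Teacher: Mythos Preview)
Your proof is correct and follows essentially the same strategy as the paper's: both reduce $g_u'(s)=0$ to a one-variable equation by dividing out a power of $s$ (you use $\phi(s)=g_u'(s)/s$, the paper uses $\widehat h(s)=A(u)s^{2-\beta}-\tfrac{\alpha}{4}B_\alpha(u)s^{\alpha-\beta}$ and solves $\widehat h(s)=-\tfrac{|\mu_\beta|\beta}{4}B_\beta(u)$), show that the resulting function has a unique interior minimum, and then invoke the Gagliardo--Nirenberg inequality together with the defining condition of $\mathcal V_D$ to force two sign changes located on either side of $s_u^*$; the second-derivative signs are read off in the paper via $g''_{u_{s_*^\pm}}(1)=(s_*^\pm)^{\beta+1}\widehat h'(s_*^\pm)$, which is equivalent to your use of \eqref{e2-7}. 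Your observation that the key inequality only requires the weaker bound $B_\alpha(u)>\tfrac{2}{\alpha}\Gamma A(u)^{\alpha/2}$ is a pleasant sharpening, but the overall architecture of the argument is the same.
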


\begin{proof}
Define
\begin{equation*}
\widehat{h}(s)=A(u)s^{2-\beta}-\frac{\alpha}{4}B_{\alpha}(u)s^{\alpha-\beta}%
\quad \text{for}\quad s>0.
\end{equation*}%
Clearly, $u_{s}\in \Lambda (c)$ if and only if $\widehat{h}(s)=-\frac{%
\mu_{\beta}\beta}{4}B_{\beta}(u)$. A straightforward calculation shows that $%
\widehat{h}(\widehat{s}_{u})=0,$ $\lim_{s\rightarrow 0^{+}}\widehat{h}%
(s)=\infty $ and $\lim_{s\rightarrow \infty }\widehat{h}(s)=0.$ Note that
\begin{equation*}
\widehat{h}^{\prime }(s)=s^{1-\beta}\left[ (2-\beta)A(u)-\frac{%
\alpha(\alpha-\beta)}{4}B_{\alpha}(u)s^{\alpha-2}\right] .
\end{equation*}%
Then we obtain that $\widehat{h}(s)$ is decreasing when $0<s<s_{u}^{\ast }$
and is increasing when $s>s_{u}^{\ast }$, which implies that
\begin{eqnarray*}
\inf_{s>0}\widehat{h}(s)=\widehat{h}\left(s_{u}^{\ast }\right) =-\frac{%
\alpha-2}{\beta-\alpha}\left[\frac{4(\beta-2)A(u)} {\alpha(\beta-\alpha)B_{%
\alpha}(u)}\right]^{\frac{2-\beta}{\alpha-2}}A(u).
\end{eqnarray*}%
Since $u\in \mathcal{V}_{D}$, we have $\Gamma A(u)^{\frac{\alpha}{2}%
}<B_{\alpha}(u)$. This indicates that
\begin{equation*}
\inf_{s>0}\widehat{h}(s)<-\frac{|\mu_{\beta}|\beta \mathcal{S}_{\beta}}{4}c^{%
\frac{4-\beta}{2}}A(u)^{\frac{\beta}{2}}\leq-\frac{|\mu_{\beta}|\beta}{4}%
B_{\beta}(u),
\end{equation*}%
which implies that there exist two constants $s_{\ast }^{+}(u)$ and $s_{\ast
}^{-}(u)$ satisfying $\widehat{s}_{u}<s_{\ast }^{-}(u)<s_{u}^{\ast }<s_{\ast
}^{+}(u)$ such that
\begin{equation*}
\widehat{h}(s_{\ast }^{\pm }(u))=-\frac{|\mu_{\beta}|\beta}{4}B_{\beta}(u).
\end{equation*}%
Namely, $u_{s_{\ast }^{\pm }(u)}\in \mathcal{P}(c).$ By a calculation on the
second order derivatives, we find that
\begin{equation*}
g_{u_{s_{\ast }^{-}(u)}}^{\prime \prime }(1)=(s_{\ast }^{-}(u))^{\beta +1}
\widehat{h}^{\prime }(s_{\ast }^{-}(u))<0\text{ and }g_{u_{s_{\ast
}^{+}\left( u\right) }}^{\prime \prime }\left( 1\right) =\left( s_{\ast
}^{+}\left( u\right) \right) ^{\beta+1}\widehat{h}^{\prime }\left( s_{\ast
}^{+}\left( u\right) \right) >0.
\end{equation*}%
These imply that $u_{s_{\ast }^{\pm }\left( u\right) }\in \mathcal{P}%
_{D}^{\pm }\left( c\right) $. This completes the proof.
\end{proof}

\begin{corollary}
\label{C3-5} If $u\in \mathcal{V}_{D}$ and $Q(u)\leq 0$, then there exists $%
0<s_{\ast}^{-}(u)\leq1$ such that $Q(u_{s_{\ast}^{-}(u)})=0$.
\end{corollary}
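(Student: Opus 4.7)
The corollary will reduce to a sign-chasing consequence of Lemma \ref{L4-3}. Since $Q(u_s) = s\,g_u^\prime(s)$ for any $s>0$, the equality $Q(u_{s_\ast^-(u)}) = 0$ is automatic once $s_\ast^-(u)$ is identified as a critical point of the fibering map, so the only real content of the statement is the bound $s_\ast^-(u) \leq 1$.

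The plan therefore has two ingredients. First, I would apply Lemma \ref{L4-3} to the given $u \in \mathcal{V}_D$ to produce $\widehat{s}_u < s_\ast^-(u) < s_u^\ast < s_\ast^+(u)$ with $u_{s_\ast^\pm(u)} \in \mathcal{P}_D^\pm(c)$, in particular $g_u^\prime(s_\ast^\pm(u)) = 0$. Second, I would analyse the sign of $g_u^\prime$ by factoring $g_u^\prime(s) = s\,F(s)$, where
$$F(s) := A(u) - \tfrac{\alpha}{4}\, s^{\alpha-2} B_\alpha(u) - \tfrac{\mu_\beta \beta}{4}\, s^{\beta-2} B_\beta(u).$$
Using $\mu_\beta < 0$ and $2 < \alpha < \beta \leq 4$, the elementary checks give $F(0^+) = A(u) > 0$, $F(s) \to +\infty$ as $s \to \infty$, and $F^\prime$ admits a unique positive zero across which it changes from negative to positive. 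Hence $F$ is strictly decreasing then strictly increasing, so it has at most two zeros on $(0,\infty)$; together with the two zeros $s_\ast^\pm(u)$ delivered by Lemma \ref{L4-3}, this forces $F(s) \leq 0$ precisely when $s \in [s_\ast^-(u), s_\ast^+(u)]$.

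Combining the two ingredients, the hypothesis $Q(u) \leq 0$ rewrites as $g_u^\prime(1) = F(1) \leq 0$, and by the preceding trichotomy this yields $s_\ast^-(u) \leq 1 \leq s_\ast^+(u)$, which is the desired estimate. The identity $Q(u_{s_\ast^-(u)}) = s_\ast^-(u)\, g_u^\prime(s_\ast^-(u)) = 0$ is then immediate from Lemma \ref{L4-3}. I do not foresee any serious obstacle: the unimodality of $F$ is entirely elementary once $\mu_\beta<0$ and $\alpha<\beta$ are used, and the geometric content of the result is already packaged in Lemma \ref{L4-3}; the minor point to be careful about is the equality case $Q(u)=0$, where $1$ could coincide with $s_\ast^-(u)$ or $s_\ast^+(u)$, but in either scenario the required weak inequality $s_\ast^-(u)\leq 1$ still holds.
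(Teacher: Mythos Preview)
Your proposal is correct and follows essentially the same route as the paper. The only cosmetic difference is that the paper argues by contradiction using the monotonicity of the auxiliary function $\widehat h(s)=A(u)s^{2-\beta}-\tfrac{\alpha}{4}B_\alpha(u)s^{\alpha-\beta}$ on $(0,s_u^\ast)$ (from the proof of Lemma~\ref{L4-3}), whereas you argue directly via the unimodality of $F(s)=s^{\beta-2}\bigl[\widehat h(s)+\tfrac{|\mu_\beta|\beta}{4}B_\beta(u)\bigr]$; since $F(s)\le 0\Leftrightarrow \widehat h(s)\le -\tfrac{|\mu_\beta|\beta}{4}B_\beta(u)$, the two arguments are the same sign analysis in different coordinates.
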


\begin{proof}
Suppose that the contrary, there exists $s_{\ast}^{-}(u)>1$ such that $%
Q(u_{s_{\ast}^{-}(u)})=0$. Then we have
\begin{equation*}
\widehat{h}(s_{\ast }^{-}(u))=-\frac{|\mu_{\beta}|\beta}{4}B_{\beta}(u).
\end{equation*}%
Since $\widehat{h}(s)$ is decreasing when $0<s<s_{u}^{\ast }$, then
\begin{equation*}
A(u)-\frac{\alpha}{4}B_{\alpha}(u)=\widehat{h}(1)>-\frac{|\mu_{\beta}|\beta}{%
4}B_{\beta}(u),
\end{equation*}
and so $Q(u)>0$. This is a contradiction.
\end{proof}

\begin{lemma}
\label{L4-6} Assume that $0<|\mu _{\beta }|<\min \left\{ \mu _{\ast },\frac{%
\alpha (\beta -2)}{\beta (\beta -\alpha )}\left( \frac{2}{\alpha }\right) ^{%
\frac{\beta -2}{\alpha -2}}\left( \frac{\alpha (\beta -2)}{\beta (\alpha -2)}%
\right) ^{\frac{\beta -2}{2}}\mu _{\ast }\right\} $. Then $m(c):=\inf_{u\in
\mathcal{P}_{D}^{-}(c)}E(u)>0$ is achieved at $u^{-}\in D(c)$.
\end{lemma}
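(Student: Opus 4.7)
The plan is to directly minimize $E$ over $\mathcal{P}_D^-(c)$ via the profile decomposition of Proposition \ref{P3-18}, with the smallness of $|\mu_\beta|$ providing the parameter control that recovers compactness.

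\textbf{Positivity.} The positivity of $m(c)$ is immediate: by Lemma \ref{L3-2}, every $u \in \mathcal{P}_D^-(c)$ satisfies $E(u) \geq \frac{(\alpha-2)(\beta-2)}{2\alpha\beta}\bigl(\frac{4}{\alpha \mathcal{S}_\alpha c^{(4-\alpha)/2}}\bigr)^{2/(\alpha-2)} > 0$, so $m(c) > 0$.

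\textbf{Setup via profile decomposition.} Take $\{u_n\} \subset \mathcal{P}_D^-(c)$ with $E(u_n) \to m(c)$. The coercivity of $E$ on $\mathcal{P}_D^-(c)$ (Lemma \ref{L3-2}) together with $\|u_n\|_2^2 \leq c$ renders $\{u_n\}$ bounded in $H^1(\mathbb{R}^N)$. Applying Proposition \ref{P3-18}, write $u_n = \sum_{j=0}^{i} \bar{u}_j(\cdot - y_n^j) + z_n^i$ with $y_n^0 = 0$, $u_n(\cdot + y_n^j) \rightharpoonup \bar{u}_j$ in $H^1$, and $|y_n^i - y_n^j| \to \infty$ for $i \neq j$. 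Inserting the gradient and nonlocal decompositions (\ref{e7-2}) and (\ref{e7-5}) into $Q(u_n) = 0$ and $E(u_n) \to m(c)$ yields
\begin{equation*}
\sum_{j \geq 0} Q(\bar{u}_j) + r_g = 0, \qquad \sum_{j \geq 0} E(\bar{u}_j) + \tfrac{1}{2} r_g = m(c),
\end{equation*}
where $r_g := \lim_{i \to \infty} \lim_{n \to \infty} \|\nabla z_n^i\|_2^2 \geq 0$.

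\textbf{Nontriviality and single-profile reduction.} The lower bound (\ref{e2-8}) gives $A(u_n) \geq \kappa_0 > 0$, and the membership $u_n \in \mathcal{V}_D$ yields $B_\alpha(u_n) > \Gamma A(u_n)^{\alpha/2} \geq \Gamma \kappa_0^{\alpha/2}$, which combined with (\ref{e7-5}) forces some $\bar{u}_{j_0} \neq 0$; after translation, we may assume $\bar{u}_0 \neq 0$. To rule out any additional nontrivial profile and positive $r_g$, one shows that each nontrivial $\bar{u}_j$ still lies in $\mathcal{V}_D$: using the strict super-additivity $(A+B)^{\alpha/2} > A^{\alpha/2} + B^{\alpha/2}$ for $\alpha > 2$ together with the smallness $|\mu_\beta| < \mu_*$, the strict inequality $B_\alpha(u_n) > \Gamma A(u_n)^{\alpha/2}$ propagates through (\ref{e7-5}) to give $B_\alpha(\bar{u}_j) > \Gamma A(\bar{u}_j)^{\alpha/2}$ for each $\bar{u}_j \neq 0$. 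Lemma \ref{L4-3} then produces projections $(\bar{u}_j)_{s_j^-} \in \mathcal{P}_D^-(c)$, whence $E((\bar{u}_j)_{s_j^-}) \geq m(c)$; combining these with the identities above and the local-maximum character of $g_{\bar{u}_j}$ at $s_j^-$ leaves $j = 0$ as the unique nontrivial profile and forces $r_g = 0$.

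\textbf{Conclusion and main obstacle.} With a single profile and vanishing residual, Brezis--Lieb-type estimates applied through (\ref{e7-2})--(\ref{e7-5}) imply $u_n \to u^- := \bar{u}_0$ strongly in $H^1(\mathbb{R}^N)$. Passing $Q(u_n) = 0$, $g_{u_n}''(1) < 0$, and the strict inequality defining $\mathcal{V}_D$ to the strong limit yields $u^- \in \mathcal{P}_D^-(c)$ and $E(u^-) = m(c) > 0$. The most delicate step is the single-profile reduction: transferring the strict inequality $\Gamma A(\cdot)^{\alpha/2} < B_\alpha(\cdot)$ from the sequence to each individual limit profile and then extracting a contradiction from fragmentation---this is precisely where the smallness condition $|\mu_\beta| < \min\{\mu_*, \frac{\alpha(\beta-2)}{\beta(\beta-\alpha)}\bigl(\frac{2}{\alpha}\bigr)^{(\beta-2)/(\alpha-2)}\bigl(\frac{\alpha(\beta-2)}{\beta(\alpha-2)}\bigr)^{(\beta-2)/2}\mu_*\}$ is essential.
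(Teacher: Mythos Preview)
Your single-profile reduction has a genuine gap. The super-additivity $(A+B)^{\alpha/2}>A^{\alpha/2}+B^{\alpha/2}$ points the wrong way: from $\sum_j B_\alpha(\bar u_j)=\lim B_\alpha(u_n)>\Gamma\bigl(\sum_j A(\bar u_j)+r_g\bigr)^{\alpha/2}>\Gamma\sum_j A(\bar u_j)^{\alpha/2}$ you only get the \emph{summed} inequality, not the individual ones $B_\alpha(\bar u_j)>\Gamma A(\bar u_j)^{\alpha/2}$. There is no mechanism here forcing each profile back into $\mathcal V_D$, so Lemma~\ref{L4-3} is not available for all $j$, and the subsequent ``combining'' step collapses. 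Even granting $\bar u_j\in\mathcal V_D$, the assertion $E(\bar u_j)\geq E\bigl((\bar u_j)_{s_j^-}\bigr)$ is unjustified: $s_j^-$ is a local \emph{maximum} of $g_{\bar u_j}$, so without locating $s=1$ relative to $s_j^-$ and $s_j^+$ you cannot compare $E(\bar u_j)$ to $m(c)$.

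The paper avoids this entirely. It first inserts the ground state $\omega_0$ of $(HE_\infty)$ (which lies in $\mathcal V_S$ precisely under the second smallness bound on $|\mu_\beta|$) to obtain an explicit upper bound $m(c)<\frac{\alpha(\beta-2)^2 m_\infty}{\beta^2(\alpha-2)}\bigl[\frac{\alpha(\beta-2)}{\beta(\beta-\alpha)}\bigr]^{2/(\beta-2)}$, hence a uniform bound on $A(u_n)$. It then shows (by contradiction, summing $Q$ over the decomposition) that \emph{some} profile $u^-$ has $Q(u^-)\leq 0$; for that single profile, the combination of $Q(u^-)\leq 0$ and the upper bound on $A(u^-)\leq\lim A(u_n)$ forces $u^-\in\mathcal V_D$ when $|\mu_\beta|<\mu_\ast$. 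Corollary~\ref{C3-5} gives a projection scale $s_*^-(u^-)\in(0,1]$, and a direct chain of inequalities---using that $f(s)=\frac{\alpha-2}{2\alpha}A(u^-)s^2-\frac{|\mu_\beta|(\beta-\alpha)}{4\alpha}B_\beta(u^-)s^\beta$ is increasing on $(0,1]$ and that the Brezis--Lieb residual in this combination is nonpositive (both steps requiring $|\mu_\beta|<\mu_\ast$)---closes the loop $m(c)\leq E(u^-_{s_*^-})\leq\cdots\leq m(c)$. Your outline never produces the $\omega_0$-based upper bound, which is exactly where both smallness constraints enter and without which the argument cannot proceed.
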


\begin{proof}
Clearly, $m(c)>0$ by Lemma \ref{L3-2}. Let $\left\{ u_{n}\right\} \subset
\mathcal{P}_{D}^{-}(c)$ be a bounded and non-vanishing minimizing sequence
to $m(c)$. Recall that $\omega _{0}$ is a ground state normalized solution
for (\ref{e6-5}). Since
\begin{equation*}
0<|\mu _{\beta }|<\frac{\alpha (\beta -2)}{\beta (\beta -\alpha )}\left(
\frac{2}{\alpha }\right) ^{\frac{\beta -2}{\alpha -2}}\left( \frac{\alpha
(\beta -2)}{\beta (\alpha -2)}\right) ^{\frac{\beta -2}{2}}\mu _{\ast },
\end{equation*}%
we have $\Gamma A(\omega _{0})^{\frac{\alpha }{2}}<B_{\alpha }(\omega _{0})$%
, and by Lemma \ref{L4-3}, there exists
\begin{equation*}
1<s_{\ast }^{-}(\omega _{0})<\left( \frac{\beta -2}{\beta -\alpha }\right) ^{%
\frac{1}{\alpha -2}}
\end{equation*}%
such that $\Vert (\omega _{0})_{s_{\ast }^{-}(\omega _{0})}\Vert _{2}^{2}=c$
and $(\omega _{0})_{s_{\ast }^{-}(\omega _{0})}\in \mathcal{P}_{D}^{-}(c)$.
Moreover, a direct calculation shows that
\begin{eqnarray*}
E\left( \left( \omega _{0}\right) _{s_{\ast }^{-}(\omega _{0})}\right) &=&%
\frac{\beta -2}{2\beta }A\left( \left( \omega _{0}\right) _{s_{\ast
}^{-}(\omega _{0})}\right) -\frac{\beta -\alpha }{4\beta }B_{\alpha }\left(
\left( \omega _{0}\right) _{s_{\ast }^{-}(\omega _{0})}\right) \\
&=&\frac{\beta -2}{2\beta }\left( s_{\ast }^{-}(\omega _{0})\right)
^{2}A\left( \omega _{0}\right) -\frac{\beta -\alpha }{4\beta }\left( s_{\ast
}^{-}(\omega _{0})\right) ^{\beta }B_{\alpha }\left( \omega _{0}\right) \\
&=&\left[ \frac{\beta -2}{2\beta }\left( s_{\ast }^{-}(\omega _{0})\right)
^{2}-\frac{\beta -\alpha }{\alpha \beta }\left( s_{\ast }^{-}(\omega
_{0})\right) ^{\beta }\right] A\left( \omega _{0}\right) \\
&<&\frac{\alpha (\beta -2)^{2}m_{\infty }}{\beta ^{2}(\alpha -2)}\left[
\frac{\alpha (\beta -2)}{\beta (\beta -\alpha )}\right] ^{2/(\beta -2)},
\end{eqnarray*}%
which implies that
\begin{equation}
A\left( u_{n}\right) +o_{n}(1)<\frac{2\alpha \beta }{(\alpha -2)(\beta -2)}%
m(c)<\frac{2\alpha ^{2}(\beta -2)m_{\infty }}{\beta (\alpha -2)^{2}}\left[
\frac{\alpha (\beta -2)}{\beta (\beta -\alpha )}\right] ^{2/(\beta -2)}.\label{e5-7}
\end{equation}

By applying Proposition \ref{P3-18}, we can find a profile decomposition of $%
\left\{ u_{n}\right\} $ satisfying (\ref{e7-1})--(\ref{e7-5}). Define the
index set
\begin{equation*}
I:=\left\{ i\geq 0:u_{i}^{-}\neq 0\right\} .
\end{equation*}%
By using Lemma \ref{L3-2} and (\ref{e7-5}), we get that $I\neq \emptyset $.
Now, we show that $Q(u_{i}^{-})\leq 0$ for some $i\geq 0$. Suppose that the
contrary, if $Q(u_{i}^{-})>0$ for all $i\in I$, then it follows from
Proposition \ref{P3-18} that
\begin{eqnarray*}
\lim \sup_{n\rightarrow \infty }\left[ \frac{\alpha }{4}B_{\alpha }(u_{n})+%
\frac{\mu _{\beta }\beta }{4}B_{\beta }(u_{n})\right]  &=&\lim
\sup_{n\rightarrow \infty }A(u_{n}) \\
&\geq &\sum\limits_{i=0}^{\infty }A(u_{i}^{-})=\sum\limits_{i\in
I}A(u_{i}^{-}) \\
&>&\sum\limits_{i=0}^{\infty }\left[ \frac{\alpha }{4}B_{\alpha }(u_{i}^{-})+%
\frac{\mu _{\beta }\beta }{4}B_{\beta }(u_{i}^{-})\right]  \\
&=&\lim \sup_{n\rightarrow \infty }\left[ \frac{\alpha }{4}B_{\alpha
}(u_{n})+\frac{\mu _{\beta }\beta }{4}(B_{\beta }(u_{n})-B_{\beta}(u_{n}-u_{i}^{-}))\right]\\
&\geq&\lim \sup_{n\rightarrow \infty }\left[ \frac{\alpha }{4}B_{\alpha
}(u_{n})+\frac{\mu _{\beta }\beta }{4}B_{\beta }(u_{n})\right] ,
\end{eqnarray*}%
which is a contradiction. Thus, there holds
\begin{equation*}
I^{-}:=\left\{ i\in I:Q(u_{i}^{-})\leq 0\right\} \neq \emptyset .
\end{equation*}%
We claim that $\Gamma A(u_{i}^{-})^{\frac{\alpha }{2}}<B_{\alpha }(u_{i}^{-})
$ for $i\in I^{-}$. Otherwise, if $\Gamma A(u_{i}^{-})^{\frac{\alpha }{2}%
}\geq B_{\alpha }(u_{i}^{-})$ and $Q(u_{i}^{-})\leq 0$, we have
\begin{equation*}
A(u_{i}^{-})\leq A(u_{i}^{-})-\frac{\mu _{\beta }\beta }{4}B_{\beta
}(u_{i}^{-})\leq \frac{\alpha }{4}B_{\alpha }(u_{i}^{-})\leq \frac{\alpha }{4%
}\Gamma A(u_{i}^{-})^{\frac{\alpha }{2}},
\end{equation*}%
which implies that
\begin{equation*}
A(u_{i}^{-})\geq \left( \frac{4}{\alpha \Gamma }\right) ^{\frac{2}{\alpha -2}%
}.
\end{equation*}%
By using (\ref{e5-7}), we get
\begin{equation*}
\left( \frac{\beta -\alpha }{\beta -2}\right) ^{\frac{2}{\alpha -2}}\left[
\frac{4(\alpha -2)}{|\mu _{\beta }|\beta (\beta -\alpha )\mathcal{S}_{\beta
}c^{\frac{4-\beta }{2}}}\right] ^{\frac{2}{\beta -2}}\leq A(u_{i}^{-})\leq
\lim_{n\rightarrow \infty }A(u_{n})\leq \frac{2\alpha ^{2}(\beta
-2)m_{\infty }}{\beta (\alpha -2)^{2}}\left[ \frac{\alpha (\beta -2)}{\beta
(\beta -\alpha )}\right] ^{\frac{2}{\beta -2}}.
\end{equation*}%
This is a contradiction, since $0<|\mu _{\beta }|<\mu _{\ast }$.

For simplicity, let us denote $u^{-}:=u_{i}^{-}$ for $i\in I^{-}$. By
Corollary \ref{C3-5}, there exists $0<s_{\ast }^{-}(u^{-})\leq 1$ such that $%
\Vert u_{s_{\ast }^{-}(u^{-})}^{-}\Vert _{2}^{2}\leq c$ and $u_{s_{\ast
}^{-}(u^{-})}^{-}\in \mathcal{P}_{D}^{-}(c)$. Then we have
\begin{eqnarray}
0 &<&m(c)  \notag \\
&\leq &E(u_{s_{\ast }^{-}(u^{-})}^{-})  \notag \\
&=&\frac{\alpha -2}{2\alpha }A\left( u_{s_{\ast }^{-}(u^{-})}^{-}\right) -%
\frac{|\mu _{\beta }|(\beta -\alpha )}{4\alpha }B_{\beta }\left( u_{s_{\ast
}^{-}(u^{-})}^{-}\right)  \notag \\
&=&(s_{\ast }^{-}(u^{-}))^{2}\frac{\alpha -2}{2\alpha }A(u^{-})-(s_{\ast
}^{-}(u^{-}))^{\beta }\frac{|\mu _{\beta }|(\beta -\alpha )}{4\alpha }%
B_{\beta }(u^{-})  \label{e4-5} \\
&\leq &\frac{\alpha -2}{2\alpha }A(u^{-})-\frac{|\mu _{\beta }|(\beta
-\alpha )}{4\alpha }B_{\beta }(u^{-})  \label{e4-6} \\
&=&\lim_{n\rightarrow \infty }\left[ \frac{\alpha -2}{2\alpha }\left(
A(u_{n})-A(u_{n}-u^{-})\right) -\frac{|\mu _{\beta }|(\beta -\alpha )}{%
4\alpha }\left( B_{\beta }(u_{n})-B_{\beta }(u_{n}-u^{-})\right) \right]
\notag \\
&=&\lim_{n\rightarrow \infty }\left[ \frac{\alpha -2}{2\alpha }A(u_{n})-%
\frac{|\mu _{\beta }|(\beta -\alpha )}{4\alpha }B_{\beta }(u_{n})\right]
\notag \\
&&+\lim_{n\rightarrow \infty }\left[ \frac{|\mu _{\beta }|(\beta -\alpha )}{%
4\alpha }B_{\beta }(u_{n}-u^{-})-\frac{\alpha -2}{2\alpha }A(u_{n}-u^{-})%
\right]  \label{e4-7} \\
&\leq &\lim_{n\rightarrow \infty }\left[ \frac{\alpha -2}{2\alpha }A(u_{n})-%
\frac{|\mu _{\beta }|(\beta -\alpha )}{4\alpha }B_{\beta }(u_{n})\right]
\label{e4-8} \\
&=&\lim_{n\rightarrow \infty }E(u_{n})=m(c).  \notag
\end{eqnarray}%
So $s_{\ast }^{-}(u^{-})=1$ and $E(u^{-})=m(c)$. This implies that $%
u_{n}(\cdot +y_{n}^{i})\rightarrow u^{-}$ in $H^{1}(\mathbb{R}^{N})$ and
therefore $I$ is a singleton set.

We now provide the details of the derivation of (\ref{e4-5})-(\ref{e4-8}).%
\newline
$\bullet $ $(\ref{e4-5})\Rightarrow (\ref{e4-6}):$ Let
\begin{equation*}
f(s)=\frac{\alpha -2}{2\alpha }A(u^{-})s^{2}-\frac{|\mu _{\beta }|(\beta
-\alpha )}{4\alpha }B_{\beta }(u^{-})s^{\beta }.
\end{equation*}%
A straightforward calculation shows that $f(s)$ is increasing when $%
0<s<s_{\max }$ and is decreasing when $s>s_{\max }$, where
\begin{equation*}
s_{\max }:=\left[ \frac{4(\alpha -2)A(u^{-})}{|\mu _{\beta }|\beta (\beta
-\alpha )B_{\beta }(u^{-})}\right] ^{\frac{1}{\beta -2}}.
\end{equation*}%
Next, we give the estimate of $s_{\max }$. It follows from (\ref{e5-7}) that
\begin{eqnarray*}
s_{\max } &=&\left[ \frac{4(\alpha -2)A(u^{-})}{|\mu _{\beta }|\beta (\beta
-\alpha )B_{\beta }(u^{-})}\right] ^{\frac{1}{\beta -2}} \\
&\geq &\left[ \frac{4(\alpha -2)A(u^{-})}{|\mu _{\beta }|\beta (\beta
-\alpha )\mathcal{S}_{\beta }A(u^{-})^{\frac{\beta }{2}}c^{\frac{4-\beta }{2}%
}}\right] ^{\frac{1}{\beta -2}} \\
&\geq &\lim_{n\rightarrow \infty }\left[ \frac{4(\alpha -2)}{|\mu _{\beta
}|\beta (\beta -\alpha )\mathcal{S}_{\beta }A(u_{n})^{\frac{\beta -2}{2}}c^{%
\frac{4-\beta }{2}}}\right] ^{\frac{1}{\beta -2}}
\end{eqnarray*}%
\begin{eqnarray*}
&=&\lim_{n\rightarrow \infty }\left[ \frac{4(\alpha -2)}{|\mu _{\beta
}|\beta (\beta -\alpha )\mathcal{S}_{\beta }c^{\frac{4-\beta }{2}}}\right] ^{%
\frac{1}{\beta -2}}A(u_{n})^{-\frac{1}{2}} \\
&>&\left[ \frac{4(\alpha -2)}{|\mu _{\beta }|\alpha (\beta -2)\mathcal{S}%
_{\beta }c^{\frac{4-\beta }{2}}}\right] ^{\frac{1}{\beta -2}}\left( \frac{%
\beta (\alpha -2)^{2}}{2\alpha ^{2}(\beta -2)m_{\infty }}\right) ^{1/2}.
\end{eqnarray*}%
Thus for $0<|\mu _{\beta }|<\mu _{\ast }$, we get
\begin{equation*}
s_{\max }>\left[ \frac{4(\alpha -2)}{|\mu _{\beta }|\alpha (\beta -2)%
\mathcal{S}_{\beta }c^{\frac{4-\beta }{2}}}\right] ^{\frac{1}{\beta -2}%
}\left( \frac{\beta (\alpha -2)^{2}}{2\alpha ^{2}(\beta -2)m_{\infty }}%
\right) ^{1/2}\geq 1.
\end{equation*}%
Since $0<s_{\ast }^{-}(u^{-})\leq 1$, then we get $f(s_{\ast
}^{-}(u^{-}))\leq f(1)$.\newline
$\bullet $ $(\ref{e4-7})\Rightarrow (\ref{e4-8}):$ We only need to prove
that
\begin{equation*}
\lim_{n\rightarrow \infty }\left[ \frac{|\mu _{\beta }|(\beta -\alpha )}{%
4\alpha }B_{\beta }(u_{n}-u^{-})-\frac{\alpha -2}{2\alpha }A(u_{n}-u^{-})%
\right] \leq 0.
\end{equation*}%
Otherwise, for $n$ sufficiently large, we have
\begin{eqnarray*}
\frac{\alpha -2}{2\alpha }A(u_{n}-u^{-}) &<&\frac{|\mu _{\beta }|(\beta
-\alpha )}{4\alpha }B_{\beta }(u_{n}-u^{-}) \\
&\leq &\frac{|\mu _{\beta }|(\beta -\alpha )}{4\alpha }\mathcal{S}_{\beta
}A(u_{n}-u^{-})^{\frac{\beta -2}{2}}\Vert u_{n}-u^{-}\Vert _{2}^{4-\beta } \\
&\leq &\frac{|\mu _{\beta }|(\beta -\alpha )}{4\alpha }\mathcal{S}_{\beta
}A(u_{n}-u^{-})^{\frac{\beta -2}{2}}c^{\frac{4-\beta }{2}},
\end{eqnarray*}%
and so
\begin{eqnarray*}
|\mu _{\beta }| &>&\lim_{n\rightarrow \infty }\frac{2(\alpha -2)}{(\beta
-\alpha )\mathcal{S}_{\beta }A(u_{n}-u^{-})^{\frac{\beta -2}{2}}c^{\frac{%
4-\beta }{2}}} \\
&\geq &\lim_{n\rightarrow \infty }\frac{2(\alpha -2)}{(\beta -\alpha )%
\mathcal{S}_{\beta }A(u_{n})^{\frac{\beta -2}{2}}c^{\frac{4-\beta }{2}}} \\
&>&\frac{2\beta (\alpha -2)}{\alpha (\beta -2)\mathcal{S}_{\beta }c^{\frac{%
4-\beta }{2}}}\left[ \frac{\beta (\alpha -2)^{2}}{2\alpha ^{2}(\beta
-2)m_{\infty }}\right] ^{\frac{\beta -2}{2}} \\
&=&\frac{2\beta }{4}\left( \frac{\beta -2}{\beta -\alpha }\right) ^{\frac{%
\beta -2}{\alpha -2}}\mu _{\ast },
\end{eqnarray*}%
which is a contradiction, since $0<|\mu _{\beta }|<\mu _{\ast }$. We
complete the proof.
\end{proof}

\begin{lemma}
\label{L4-4} If $0<|\mu _{\beta }|<\min \left\{ \mu _{\ast },\frac{\alpha
(\beta -2)}{\beta (\beta -\alpha )}\left( \frac{2}{\alpha }\right) ^{\frac{%
\beta -2}{\alpha -2}}\left( \frac{\alpha (\beta -2)}{\beta (\alpha -2)}%
\right) ^{\frac{\beta -2}{2}}\mu _{\ast }\right\} $, then there holds
\begin{equation*}
m(c)=\inf_{u\in \mathcal{P}_{D}^{-}}E(u)=\inf_{u\in \mathcal{P}%
_{S}^{-}}E(u)<\inf_{u\in \mathcal{P}_{D\backslash S}^{-}}E(u).
\end{equation*}
\end{lemma}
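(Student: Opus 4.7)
The plan is to exploit $\mathcal{P}_S^-(c) \subset \mathcal{P}_D^-(c)$, which instantly gives $\inf_{\mathcal{P}_D^-} E \leq \inf_{\mathcal{P}_S^-} E$, and then to build, for every $v \in \mathcal{P}_{D \setminus S}^-(c)$, a companion $w \in \mathcal{P}_S^-(c)$ with $E(w) < E(v)$. Applied to the minimizer $u^-$ of Lemma \ref{L4-6} this rules out $\|u^-\|_2^2 < c$, placing $u^- \in \mathcal{P}_S^-(c)$ and yielding $m(c) = \inf_{\mathcal{P}_D^-} E = \inf_{\mathcal{P}_S^-} E$; applied to an arbitrary $v \in \mathcal{P}_{D \setminus S}^-$ it forces $E(v) > m(c)$, producing the strict separation.

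Given $v \in \mathcal{V}_D$ with $\|v\|_2^2 = c_0 \leq c$ and $v \in \mathcal{P}_D^-(c)$, I set $v^\sigma(x) := \sigma v(x)$ and
\[
w(\sigma) := (v^\sigma)_{t^*(\sigma)} = \sigma\, v_{t^*(\sigma)}, \qquad t^*(\sigma) := s_*^-(v^\sigma),
\]
the $\mathcal{P}^-$-projection provided by Lemma \ref{L4-3}. A short check, using $\alpha < 4$ and the open condition defining $\mathcal{V}_D$, shows $v^\sigma \in \mathcal{V}_D$ for every $\sigma \geq 1$. Since $Q(v) = 0$ one computes $Q(v^\sigma) = (\sigma^2 - \sigma^4) A(v) \leq 0$, so Corollary \ref{C3-5} gives $t^*(\sigma) \in (0, 1]$ with $t^*(1) = 1$. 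Hence $w(\sigma) \in \mathcal{P}_D^-$, $\|w(\sigma)\|_2^2 = \sigma^2 c_0$, $w(1) = v$, and $w(\sigma_1) \in \mathcal{P}_S^-(c)$ for $\sigma_1 := \sqrt{c/c_0}$.

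The decisive step is the strict monotonicity of $F(\sigma) := E(w(\sigma))$ on $[1, \sigma_1]$. The envelope identity (since $\partial_t[E((v^\sigma)_t)]|_{t = t^*(\sigma)} = 0$) reduces this to
\[
F'(\sigma) = \sigma^3 \left[\frac{\alpha - 4}{4} (t^*(\sigma))^\alpha B_\alpha(v) + \frac{4 - \beta}{4} (t^*(\sigma))^\beta |\mu_\beta| B_\beta(v)\right].
\]
Because $t^*(\sigma) \leq 1$ and $\beta > \alpha$, $F'(\sigma) < 0$ will follow from the single inequality $(4 - \alpha) B_\alpha(v) > (4 - \beta) |\mu_\beta| B_\beta(v)$. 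This in turn is obtained by combining the defining inequality $g_v''(1) < 0$ of $\mathcal{P}_D^-$, which upon elimination of $A(v)$ via $Q(v) = 0$ reads $\alpha(\alpha - 2) B_\alpha(v) > \beta(\beta - 2) |\mu_\beta| B_\beta(v)$, with the purely numerical estimate
\[
(4 - \alpha) \beta(\beta - 2) > (4 - \beta) \alpha(\alpha - 2) \qquad \text{for } 2 < \alpha < \beta \leq 4,
\]
proved by noting that the difference in $\beta$ vanishes at $\beta = \alpha$ and has strictly positive derivative on $(1, \infty)$.

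The main technical hurdle is to propagate $F'(\sigma) < 0$ globally along $[1, \sigma_1]$ rather than only infinitesimally at $\sigma = 1$; the simultaneous use of $t^*(\sigma) \leq 1$ and the two inequalities above is precisely what accomplishes this, converting the pointwise $\mathcal{P}_D^-$ information into a uniform sign for $F'$. Once this is in hand, $E(w(\sigma_1)) < E(v)$ whenever $c_0 < c$. Applied to the minimizer $u^-$ with $\|u^-\|_2^2 < c$, this contradicts $E(u^-) = m(c)$, forcing $\|u^-\|_2^2 = c$ and the equality chain $m(c) = \inf_{\mathcal{P}_D^-} E = \inf_{\mathcal{P}_S^-} E$; applied to any $v \in \mathcal{P}_{D \setminus S}^-$ it gives $E(v) > E(w(\sigma_1)) \geq \inf_{\mathcal{P}_S^-} E = m(c)$, establishing the strict inequality $\inf_{\mathcal{P}_S^-} E < \inf_{\mathcal{P}_{D \setminus S}^-} E$.
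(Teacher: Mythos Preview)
Your argument is correct and takes a genuinely different route from the paper's proof. The paper argues by contradiction via Lagrange multipliers: assuming the minimizer $u^-$ from Lemma~\ref{L4-6} lies in $\mathcal{P}_{D\setminus S}^-(c)$, it is a free critical point of $E+\nu Q$ on $H^1(\mathbb{R}^N)$ for some multiplier $\nu$, and the resulting Nehari-type and Pohozaev-type identities are then shown to be incompatible with $g_{u^-}''(1)<0$ through a case analysis on the sign of $\nu$ and on whether $\beta<4$ or $\beta=4$. Your approach is instead a direct deformation: for each $v\in\mathcal{P}_{D\setminus S}^-(c)$ you scale the amplitude and reproject onto the fibering maximum, producing a strictly decreasing path into $\mathcal{P}_S^-(c)$. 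This is more elementary (no multipliers, no Pohozaev identity beyond $Q(v)=0$), avoids the case split entirely, and gives quantitative monotonicity along the path; the key numerical inequality $(4-\alpha)\beta(\beta-2)>(4-\beta)\alpha(\alpha-2)$ plays the role that the algebraic contradictions play in the paper. One small point you should make explicit is that $t^*(\sigma)$ is $C^1$ in $\sigma$ (by the implicit function theorem, since $\widehat h'(s_*^-)\neq 0$), so that the envelope computation of $F'(\sigma)$ is justified. Finally, note that the last sentence slightly overclaims: from ``$E(v)>m(c)$ for every $v\in\mathcal{P}_{D\setminus S}^-$'' one only obtains $\inf_{\mathcal{P}_{D\setminus S}^-}E\geq m(c)$, not strict inequality, unless one also argues that the infimum over $\mathcal{P}_{D\setminus S}^-$ is attained; the paper's proof shares exactly the same lacuna, and in any case what is actually needed downstream (Lemma~\ref{L4-14} and Theorem~\ref{T4-1}) is only that the minimizer $u^-$ lies in $\mathcal{P}_S^-(c)$, which your argument establishes cleanly.
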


\begin{proof}
Suppose by contradiction that there is $\bar{u}\in \mathcal{P}_{D\backslash
S}^{-}$ and
\begin{equation*}
m(c)=E(\bar{u})\leq \inf_{u\in \mathcal{P}_{S}^{-}}E(u).
\end{equation*}%
Since $\bar{u}$ is a local minimizer for $E$ on $\mathcal{P}_{D}^{-}$, there
is a Lagrange multiplier $\nu \in \mathbb{R}$ such that
\begin{equation*}
\left\langle E^{\prime }(\bar{u}),\phi \right\rangle +\nu \left( \int_{%
\mathbb{R}^{N}}\nabla \bar{u}\nabla \phi dx+\frac{|\mu _{\beta }|\beta }{2}%
\int_{\mathbb{R}^{N}}(|x|^{-\beta }\ast |\bar{u}|^{2})\bar{u}\phi dx-\frac{%
\alpha }{2}\int_{\mathbb{R}^{N}}(|x|^{-\alpha }\ast |\bar{u}|^{2})\bar{u}%
\phi dx\right) =0
\end{equation*}%
for any $\phi \in H^{1}(\mathbb{R}^{N})$. Namely, $\bar{u}$ is a weak
solution to
\begin{equation*}
-(1+\nu )\Delta \bar{u}+|\mu _{\beta }|(1+\frac{\nu \beta }{2})(|x|^{-\beta
}\ast |\bar{u}|^{2})\bar{u}-(1+\frac{\nu \alpha }{2})(|x|^{-\alpha }\ast |%
\bar{u}|^{2})\bar{u}=0.
\end{equation*}%
Moreover, $\bar{u}$ satisfies the following Nehari-type identity
\begin{equation}
(1+\nu )A(\bar{u})+|\mu _{\beta }|(1+\frac{\nu \beta }{2})B_{\beta }(\bar{u}%
)-(1+\frac{\nu \alpha }{2})B_{\alpha }(\bar{u})=0,  \label{e5-1}
\end{equation}%
and Pohozaev type identity
\begin{equation}
\frac{(N-2)(1+\nu )}{2}A(\bar{u})+\frac{|\mu _{\beta }|\left( 2N-\beta
\right) (1+\frac{\nu \beta }{2})}{4}B_{\beta }(\bar{u})-\frac{\left(
2N-\alpha \right) (1+\frac{\nu \alpha }{2})}{4}B_{\alpha }(\bar{u})=0.
\label{e5-2}
\end{equation}

We will prove through separating it into two cases.

\textbf{Case $(I)$:} $N\geq 3$ and $2<\alpha <\beta <\min \left\{ N,4\right\}$.

$(I-a):\nu \leq 0$. Combining (\ref{e5-1}) with (\ref{e5-2}), we get
\begin{equation}
(1+\frac{\nu \beta }{2})(4-\beta )|\mu _{\beta }|B_{\beta }(\bar{u})=(1+%
\frac{\nu \alpha }{2})(4-\alpha )B_{\alpha }(\bar{u}).  \label{e5-3}
\end{equation}%
Since $(4-\beta )|\mu _{\beta }|B_{\beta }(\bar{u})>0,(4-\alpha )B_{\alpha }(%
\bar{u})>0$ and $1+\frac{\nu \beta }{2}\leq 1+\frac{\nu \alpha }{2}$
(equality occurs if and only if $\nu =0$), we have $1+\frac{\nu \beta }{2}%
\neq 0$ and $1+\frac{\nu \alpha }{2}\neq 0$. By using (\ref{e5-3}), one has
\begin{equation}
|\mu _{\beta }|B_{\beta }(\bar{u})=\frac{(1+\frac{\nu \alpha }{2})(4-\alpha )%
}{(1+\frac{\nu \beta }{2})(4-\beta )}B_{\alpha }(\bar{u}).  \label{e5-6}
\end{equation}%
Since $\bar{u}\in \mathcal{P}_{D}^{-}$, we have
\begin{equation*}
\frac{\beta (\beta -2)}{4}|\mu _{\beta }|B_{\beta }(\bar{u})<\frac{\alpha
(\alpha -2)}{4}B_{\alpha }(\bar{u}),
\end{equation*}%
and then
\begin{equation*}
\frac{\beta (\beta -2)(1+\frac{\nu \alpha }{2})(4-\alpha )}{4(1+\frac{\nu
\beta }{2})(4-\beta )}<\frac{\alpha (\alpha -2)}{4},
\end{equation*}%
which is a contradiction, since
\begin{equation*}
\beta (\beta -2)(4-\alpha )(1+\frac{\nu \alpha }{2})>\alpha (\alpha
-2)(4-\beta )(1+\frac{\nu \beta }{2}).
\end{equation*}

$(I-b):\nu >0$. Since $\bar{u}\in \mathcal{P}_{D}^{-}$, we have
\begin{equation}
A(\bar{u})+\frac{|\mu _{\beta }|\beta }{4}B_{\beta }(\bar{u})-\frac{\alpha }{%
4}B_{\alpha }(\bar{u})=0.  \label{e5-4}
\end{equation}%
By (\ref{e5-1}) and (\ref{e5-4}), we get
\begin{equation*}
(4-\beta +\nu \beta )|\mu _{\beta }|B_{\beta }(\bar{u})=(4-\alpha +\nu
\alpha )B_{\alpha }(\bar{u}).
\end{equation*}%
Since $(4-\beta )+\nu \beta >0$, we have
\begin{equation}
|\mu _{\beta }|B_{\beta }(\bar{u})=\frac{4-\alpha +\nu \alpha }{4-\beta +\nu
\beta }B_{\alpha }(\bar{u}).  \label{e5-5}
\end{equation}%
Combining (\ref{e5-6}) and (\ref{e5-5}), we get
\begin{equation}
\frac{4-\alpha +\nu \alpha }{4-\beta +\nu \beta }=\frac{(1+\frac{\nu \alpha
}{2})(4-\alpha )}{(1+\frac{\nu \beta }{2})(4-\beta )}.  \label{e5-8}
\end{equation}%
But a direct calculation shows that
\begin{eqnarray*}
&&(1+\frac{\nu \alpha }{2})(4-\alpha )(4-\beta +\nu \beta )-(1+\frac{\nu
\beta }{2})(4-\beta )(4-\alpha +\nu \alpha ) \\
&=&\frac{\nu (\beta -\alpha )}{2}\left[ 8-(4-\alpha )(4-\beta )\right] +%
\frac{\nu ^{2}\alpha \beta (\beta -\alpha )}{2}>0,
\end{eqnarray*}%
since $(4-\alpha )(4-\beta )<4$. This contradicts with (\ref{e5-8}).

\textbf{Case $(II)$:} $N\geq 5$ and $2<\alpha <\beta=4$. Combining (\ref{e5-1}) with (\ref{e5-2}), we get
\begin{equation}
(1+\frac{\nu \alpha }{2})(4-\alpha )B_{4}(\bar{u})=0,  \label{e5-3}
\end{equation}%
and so $\nu=-\frac{2}{\alpha}$. Since $\bar{u}\in \mathcal{P}_{D}^{-}$, we have
\begin{equation}\label{e3-20}
(\alpha-2)A(\bar{u})>|\mu _{\beta
}|(4-\alpha )B_{4}(\bar{u}).
\end{equation}
By (\ref{e5-1}), we get
\begin{equation*}
(1+\nu )A(\bar{u})+|\mu _{\beta }|(1+2\nu )B_{4}(\bar{u}%
)=0.
\end{equation*}
Substitute $\nu=-\frac{2}{\alpha}$ into the above formula, one has
\begin{equation*}
(\alpha-2)A(\bar{u})=|\mu _{\beta }|(4-\alpha)B_{4}(\bar{u}),
\end{equation*}
which contradicts with (\ref{e3-20}). We complete the proof.
\end{proof}

\begin{lemma}
\label{L4-2} Assume that $\mu_{\beta}<0$ and one of the two following conditions hold:\newline
$(i)$ $N\geq 3$ and $2<\alpha <\beta <\min \left\{ N,4\right\};$\newline
$(ii)$ $N\geq 5$ and $2<\alpha<\beta=4$.\newline
Then $\mathcal{P}_{S}^{0}(c)=\emptyset$.
\end{lemma}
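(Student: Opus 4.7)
I would argue by contradiction, supposing that some $u \in \mathcal{P}_S^0(c)$ exists, and then combine the two defining equalities $g_u'(1)=0$ and $g_u''(1)=0$ with the strict inequality defining $\mathcal{V}_S$ to produce incompatible bounds on $A(u)$. The two expressions for $g_u''(1)$ in (\ref{e2-7}) are the right starting point because each packages the constraint $Q(u)=0$ together with $g_u''(1)=0$ in a way that eliminates one of the nonlocal terms.

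First, from the second form in (\ref{e2-7}), the condition $g_u''(1)=0$ immediately yields the algebraic identity
\begin{equation*}
B_\alpha(u) \;=\; \frac{4(\beta-2)}{\alpha(\beta-\alpha)}\, A(u).
\end{equation*}
Since $u\in \mathcal{V}_S$ satisfies $\Gamma A(u)^{\alpha/2} < B_\alpha(u)$, I would substitute and use the explicit definition of $\Gamma$ (namely $\Gamma = \frac{2(\beta-2)}{\beta-\alpha}\bigl[\frac{|\mu_\beta|\beta(\beta-\alpha)\mathcal{S}_\beta c^{(4-\beta)/2}}{4(\alpha-2)}\bigr]^{(\alpha-2)/(\beta-2)}$) to unwind the inequality to the form
\begin{equation*}
A(u) \;<\; \left(\frac{2}{\alpha}\right)^{\!2/(\alpha-2)} \left[\,\frac{4(\alpha-2)}{|\mu_\beta|\beta(\beta-\alpha)\mathcal{S}_\beta c^{(4-\beta)/2}}\right]^{\!2/(\beta-2)}.
\end{equation*}

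Second, I would use the first form in (\ref{e2-7}), which under $g_u''(1)=0$ becomes $(\alpha-2)A(u)=\frac{|\mu_\beta|\beta(\beta-\alpha)}{4}B_\beta(u)$. Applying Gagliardo-Nirenberg (Lemma \ref{L2-1}) to estimate $B_\beta(u)\leq \mathcal{S}_\beta A(u)^{\beta/2}\|u\|_2^{4-\beta}\leq \mathcal{S}_\beta A(u)^{\beta/2}c^{(4-\beta)/2}$ and rearranging yields a lower bound
\begin{equation*}
A(u) \;\geq\; \left[\,\frac{4(\alpha-2)}{|\mu_\beta|\beta(\beta-\alpha)\mathcal{S}_\beta c^{(4-\beta)/2}}\right]^{\!2/(\beta-2)}.
\end{equation*}
Since $\alpha>2$ implies $(2/\alpha)^{2/(\alpha-2)}<1$, the upper and lower bounds for $A(u)$ are contradictory, which rules out the existence of $u\in \mathcal{P}_S^0(c)$.

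The case $\beta=4$, $N\geq 5$ needs no separate treatment: the Gagliardo-Nirenberg estimate still applies (with $c^{(4-\beta)/2}=1$), and the same two bounds on $A(u)$ remain strictly incompatible. The only technical point, which I do not expect to be an obstacle, is the algebraic bookkeeping required to express the upper bound coming from the $\mathcal{V}_S$ constraint in terms of the same quantity $\bigl[\tfrac{4(\alpha-2)}{|\mu_\beta|\beta(\beta-\alpha)\mathcal{S}_\beta c^{(4-\beta)/2}}\bigr]^{2/(\beta-2)}$ that appears in the Gagliardo-Nirenberg lower bound, so that the factor $(2/\alpha)^{2/(\alpha-2)}<1$ can be isolated cleanly. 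Notice that this argument uses no smallness assumption on $|\mu_\beta|$, consistent with the hypotheses of the lemma.
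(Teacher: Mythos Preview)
Your proposal is correct and follows essentially the same approach as the paper's proof: use the first form in (\ref{e2-7}) together with Gagliardo--Nirenberg to obtain the lower bound $A(u)\geq K$ with $K=\bigl[\tfrac{4(\alpha-2)}{|\mu_\beta|\beta(\beta-\alpha)\mathcal{S}_\beta c^{(4-\beta)/2}}\bigr]^{2/(\beta-2)}$, and use the second form in (\ref{e2-7}) together with the $\mathcal{V}_S$ constraint to obtain a strict upper bound contradicting it. Your version is in fact slightly more explicit, isolating the factor $(2/\alpha)^{2/(\alpha-2)}<1$ in the upper bound, whereas the paper simply records $A(u)<K$ (which follows from your sharper inequality).
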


\begin{proof}
Suppose that $\mathcal{P}_{S}^{0}(c)\neq \emptyset $. Then for $u\in
\mathcal{P}_{S}^{0}(c)$, it follows from (\ref{e2-7}) that%
\begin{equation*}
(\alpha -2)A(u)=\frac{|\mu _{\beta }|\beta (\beta -\alpha )}{4}B_{\beta
}(u)\leq \frac{|\mu _{\beta }|\beta (\beta -\alpha )\mathcal{S}_{\beta }}{4}%
c^{\frac{4-\beta }{2}}A(u)^{\frac{\beta }{2}},
\end{equation*}%
which indicates that
\begin{equation}
A(u)\geq \left[ \frac{4(\alpha -2)}{|\mu _{\beta }|\beta (\beta -\alpha )%
\mathcal{S}_{\beta }c^{\frac{4-\beta }{2}}}\right] ^{\frac{2}{\beta -2}}.
\label{e4-1}
\end{equation}%
On the other hand, by using (\ref{e2-7}) and the fact of $\Gamma A(u)^{\frac{%
\alpha }{2}}<B_{\alpha }(u)$, we have
\begin{equation*}
A(u)<\left[ \frac{4(\alpha -2)}{|\mu _{\beta }|\beta (\beta -\alpha )%
\mathcal{S}_{\beta }c^{\frac{4-\beta }{2}}}\right] ^{\frac{2}{\beta -2}},
\end{equation*}%
which is a contradiction with (\ref{e4-1}). We complete the proof.
\end{proof}

Furthermore, following the arguments of Soave \cite{S1}, we have the
following lemma.

\begin{lemma}
\label{L4-14} If $\mathcal{P}_{S}^{0}(c)=\emptyset $, then $\mathcal{P}_{S}(c)$
is a natural constraint manifold, namely, if $u^{-}$ is a critical point
for $E|_{\mathcal{P}_{S}(c)}$, then $u^{-}$ is a critical point for $%
E|_{S(c)}$.
\end{lemma}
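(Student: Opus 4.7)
The plan is to invoke the Lagrange multiplier rule on the manifold $\mathcal{P}_{S}(c)$ together with the dilation generator $\mathcal{D}u:=\frac{N}{2}u+x\cdot\nabla u$, and to show via a direct computation on the fibering map that the multiplier associated with the Pohozaev constraint $Q$ must vanish. Throughout I denote by $Q(u)=A(u)-\frac{\alpha}{4}B_{\alpha}(u)-\frac{\mu_{\beta}\beta}{4}B_{\beta}(u)=g_{u}'(1)$ the Pohozaev functional, so that $\mathcal{P}_{S}(c)=\{u\in S(c):Q(u)=0\}$.

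First I would check that $\mathcal{P}_{S}(c)$ is a genuine $C^{1}$ submanifold of $S(c)$ of codimension one. The hypothesis $\mathcal{P}_{S}^{0}(c)=\emptyset$ gives $g_{u}''(1)\neq 0$ for every $u\in\mathcal{P}_{S}(c)$, and I would show that this quantity is precisely $\langle Q'(u),\mathcal{D}u\rangle$; combined with the elementary identity $\langle u,\mathcal{D}u\rangle=\frac{1}{2}\frac{d}{ds}\big|_{s=1}\|u_{s}\|_{2}^{2}=0$, this yields linear independence of $Q'(u)$ and the gradient of the mass functional at every point of $\mathcal{P}_{S}(c)$. Hence the classical two-constraint Lagrange rule applies: if $u^{-}$ is a critical point of $E|_{\mathcal{P}_{S}(c)}$, there exist $\lambda,\nu\in\mathbb{R}$ such that
\begin{equation*}
E'(u^{-})+\lambda\, u^{-}+\nu\, Q'(u^{-})=0\quad\text{in }H^{-1}(\mathbb{R}^{N}).
\end{equation*}

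The key step is to test this identity against $\mathcal{D}u^{-}$. Using $E(u_{s})=g_{u}(s)$ and the chain rule, I would compute
\begin{equation*}
\langle E'(u^{-}),\mathcal{D}u^{-}\rangle=\tfrac{d}{ds}\big|_{s=1}E((u^{-})_{s})=g_{u^{-}}'(1)=Q(u^{-})=0,
\end{equation*}
since $u^{-}\in\mathcal{P}_{S}(c)$. Similarly $\langle u^{-},\mathcal{D}u^{-}\rangle=0$ because the dilation preserves the $L^{2}$-norm. Finally, using the identity $Q((u^{-})_{s})=s\,g_{u^{-}}'(s)$ (which follows from $g_{(u^{-})_{s}}(t)=g_{u^{-}}(st)$), I would obtain
\begin{equation*}
\langle Q'(u^{-}),\mathcal{D}u^{-}\rangle=\tfrac{d}{ds}\big|_{s=1}Q((u^{-})_{s})=g_{u^{-}}'(1)+g_{u^{-}}''(1)=g_{u^{-}}''(1).
\end{equation*}
Substituting these three identities back into the Lagrange relation tested against $\mathcal{D}u^{-}$ gives $\nu\,g_{u^{-}}''(1)=0$. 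Since $\mathcal{P}_{S}^{0}(c)=\emptyset$ forces $g_{u^{-}}''(1)\neq 0$, we conclude $\nu=0$.

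Consequently $E'(u^{-})+\lambda u^{-}=0$, which is exactly the Euler-Lagrange equation for critical points of $E|_{S(c)}$, proving that $\mathcal{P}_{S}(c)$ is a natural constraint. The main obstacle I anticipate is the bookkeeping for the transversality step (ensuring that the two-constraint Lagrange rule applies rigorously and that $\mathcal{D}u^{-}$ is admissible as a test function in the dual pairing); once the identity $\langle Q'(u),\mathcal{D}u\rangle=g_{u}''(1)$ is established cleanly, the rest of the argument is immediate from the non-degeneracy assumption.
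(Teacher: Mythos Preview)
Your argument is correct and follows essentially the same strategy as the paper: invoke the two-constraint Lagrange rule to obtain $E'(u^{-})+\lambda u^{-}+\nu Q'(u^{-})=0$, then use a Pohozaev-type computation to force $\nu\,g_{u^{-}}''(1)=0$, whence $\nu=0$ by the hypothesis $\mathcal{P}_{S}^{0}(c)=\emptyset$.

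The only difference is packaging. The paper writes the modified elliptic equation explicitly, derives its own Pohozaev identity $(1+\nu)A(u^{-})+\tfrac{|\mu_\beta|\beta(1+\nu\beta/2)}{4}B_\beta(u^{-})-\tfrac{\alpha(1+\nu\alpha/2)}{4}B_\alpha(u^{-})=0$, and subtracts $Q(u^{-})=0$ to isolate $\tfrac{\nu}{2}g_{u^{-}}''(1)=0$. You instead test the Lagrange relation directly against the dilation generator $\mathcal{D}u^{-}$ and read off the three pairings from the fibering identities $\langle E'(u),\mathcal{D}u\rangle=g_u'(1)$, $\langle u,\mathcal{D}u\rangle=0$, $\langle Q'(u),\mathcal{D}u\rangle=g_u''(1)$. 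These are the same computation---deriving the Pohozaev identity for the modified equation \emph{is} pairing with $\mathcal{D}u^{-}$---but your version is more transparent and also makes explicit the transversality check (linear independence of $Q'(u)$ and $u$) that the paper leaves implicit. Your caveat about the admissibility of $\mathcal{D}u^{-}$ as a test function is exactly the cutoff issue handled in the paper's Lemma~2.3, so no new difficulty arises.
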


\begin{proof}
If $u^{-}\in \mathcal{P}_{S}(c)$ is a critical point for $E|_{\mathcal{P}%
_{S}(c)}$, then by the Lagrange multipliers rule, there exists $\lambda^{-}%
,\nu^{-}$ such that
\begin{equation*}
\left\langle E^{\prime }(u^{-}),\varphi \right\rangle +\lambda^{-}\int_{%
\mathbb{R}^{N}}u^{-}\varphi dx+\nu^{-}\left\langle Q^{\prime }(u^{-}%
),\varphi \right\rangle =0,\,\forall \varphi \in H^{1}(\mathbb{R}^{N}).
\end{equation*}%
Then $\bar{u}$ solves
\begin{equation*}
-(1+\nu^{-})\Delta u^{-}+\lambda^{-}u^{-}+|\mu _{\beta }|(1+\frac{%
\nu^{-}\beta }{2})(|x|^{-\beta }\ast |u^{-}|^{2})u^{-}-(1+\frac{\nu^{-}\alpha }{2})(|x|^{-\alpha }\ast |u^{-}|^{2})u^{-}=0,
\end{equation*}%
together with the Pohozaev identity, there holds
\begin{equation*}
(1+\nu^{-})A(u^{-})+\frac{|\mu _{\beta }|\beta (1+\frac{\nu^{-}\beta }{%
2})}{4}B_{\beta }(u^{-})-\frac{\alpha (1+\frac{\nu^{-}\alpha }{2})}{4}%
B_{\alpha }(u^{-})=0.
\end{equation*}%
Since $Q(u^{-})=0$ and $u^{-}\not\in \mathcal{P}_{S}^{0}(c)$, it follows
from
\begin{equation*}
\frac{\nu^{-}}{2}\left[ A\left( u^{-}\right) +\frac{|\mu _{\beta }|\beta
(\beta -1)}{4}B_{\beta }\left( u^{-}\right) -\frac{\alpha (\alpha -1)}{4}%
B_{\alpha }\left( u^{-}\right) \right] =0
\end{equation*}%
that $\nu^{-}=0$. We complete the proof.
\end{proof}

\begin{lemma}
\label{L4-5} The sign of the corresponding Lagrange multiplier to the solution of problem (\ref{e1-2}) is positive.
\end{lemma}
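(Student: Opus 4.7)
The plan is to derive a closed-form expression for $\lambda \|u\|_2^2$ involving only the quantities $A(u)$ and $B_\beta(u)$, and then read off the sign from the hypothesis $\mu_\beta<0$ together with $\alpha<\beta\le 4$.

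First, I would test the equation (\ref{e1-1}) against $u$ itself, which gives the Nehari-type identity
\begin{equation*}
A(u)+\lambda\|u\|_2^2 = B_\alpha(u)+\mu_\beta B_\beta(u).
\end{equation*}
At the same time, since any minimizer of problem (\ref{e1-2}) lies in $\mathcal{P}_D^{-}(c)$ (for $u^{-}$) or in the corresponding global minimizer manifold (for $\tilde u$), Lemma \ref{L2-2} and the definition of $Q$ yield the Pohozaev identity
\begin{equation*}
A(u)-\frac{\alpha}{4}B_\alpha(u)-\frac{\mu_\beta\beta}{4}B_\beta(u)=0.
\end{equation*}

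Next, I would solve the Pohozaev identity for $B_\alpha(u)=\frac{4}{\alpha}A(u)-\frac{\mu_\beta\beta}{\alpha}B_\beta(u)$ and substitute into the Nehari identity, obtaining
\begin{equation*}
\lambda\|u\|_2^2=\frac{4-\alpha}{\alpha}A(u)+\mu_\beta\,\frac{\alpha-\beta}{\alpha}B_\beta(u)=\frac{4-\alpha}{\alpha}A(u)+\frac{|\mu_\beta|(\beta-\alpha)}{\alpha}B_\beta(u).
\end{equation*}
Under the hypotheses of Theorem \ref{t2}, namely $\mu_\beta<0$ and $2<\alpha<\beta\le 4$, both coefficients $\frac{4-\alpha}{\alpha}$ and $\frac{|\mu_\beta|(\beta-\alpha)}{\alpha}$ are non-negative, with $\frac{4-\alpha}{\alpha}>0$ in case $(i)$ (where $\alpha<4$) and with $\frac{4-\alpha}{\alpha}=\frac{|\mu_\beta|(\beta-\alpha)}{\alpha}\cdot\text{(positive)}$ being analyzed carefully in case $(ii)$ where $\beta=4$ and still $\alpha<4$. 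Since $u\not\equiv 0$ implies $A(u)>0$ and $B_\beta(u)>0$, the right-hand side is strictly positive, and therefore $\lambda>0$.

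The argument contains no real obstacle; the delicate point is merely to justify that both minimizers $u^{-}$ and $\tilde u$ satisfy the Pohozaev identity $Q(u)=0$, which for $u^{-}$ follows from $u^{-}\in\mathcal{P}_D^{-}(c)\subset\mathcal{P}(c)$ and Lemma \ref{L4-14} ensuring that constrained critical points on $\mathcal{P}_S(c)$ are free critical points of $E|_{S(c)}$ (so the Lagrange multiplier associated with the Pohozaev constraint vanishes), and for $\tilde u$ from the general fact that every critical point of $E|_{S(c)}$ lies in $\mathcal{P}(c)$. Once these memberships are in place, the algebraic identity above immediately delivers $\lambda>0$.
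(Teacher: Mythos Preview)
Your proof is correct and follows essentially the same approach as the paper: combine the Nehari identity $A(u)+\lambda\|u\|_2^2=B_\alpha(u)+\mu_\beta B_\beta(u)$ with the Pohozaev identity $Q(u)=0$ to eliminate $B_\alpha(u)$ and obtain $\lambda c=\frac{4-\alpha}{\alpha}A(u)+\frac{|\mu_\beta|(\beta-\alpha)}{\alpha}B_\beta(u)>0$. The only minor remark is that you need not invoke the specific minimizers or Lemma~\ref{L4-14} to justify $Q(u)=0$; this holds for \emph{any} weak solution of (\ref{e1-1}) by Lemma~\ref{L2-2}, which is how the paper (implicitly) argues.
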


\begin{proof}
Let $u$ is a solution for problem (\ref{e1-2}), by the Lagrange
multipliers rule, there exists $\lambda\in \mathbb{R}$ such that for
every $\varphi \in H^{1}(\mathbb{R}^{N})$,
\begin{equation*}
\int_{\mathbb{R}^{N}}\nabla u\nabla \varphi dx+\lambda\int_{%
\mathbb{R}^{N}}u\varphi dx-\mu _{\beta }\int_{\mathbb{R}%
^{N}}(|x|^{-\beta }\ast |u|^{2})|u|\varphi -\int_{\mathbb{R}%
^{N}}(|x|^{-\alpha }\ast |u|^{2})|u|\varphi =0.
\end{equation*}%
In particular, we have
\begin{equation*}
\lambda c=B_{\alpha }(u)-A(u)+\mu _{\beta }B_{\beta }(u).  \label{e3-17}
\end{equation*}%
Since $Q(u)=0$, we have
\begin{equation*}
A\left( u\right) -\frac{\mu _{\beta }\beta }{4}B_{\beta }\left( u
\right) -\frac{\alpha }{4}B_{\alpha }\left( u\right) =0.  \label{e3-18}
\end{equation*}%
Then, we have
\begin{equation*}
\lambda=\frac{1}{c}\left[ \frac{4-\alpha }{\alpha }A\left( u%
\right) +\frac{|\mu _{\beta }|(\beta -\alpha)}{\alpha }B_{\beta }\left( u\right) %
\right] >0.
\end{equation*}%
We complete the proof.
\end{proof}

\begin{theorem}
\label{T4-1} Assume that $\mu_{\beta}<0$ and one of the two following conditions hold:\newline
$(i)$ $N\geq 3$ and $2<\alpha <\beta <\min \left\{ N,4\right\};$\newline
$(ii)$ $N\geq 5$ and $2<\alpha<\beta=4$.\newline
Then for
\begin{equation*}
0<|\mu _{\beta }|<\min \left\{ \mu _{\ast },\frac{\alpha (\beta -2)}{\beta
(\beta -\alpha )}\left(
\frac{2}{\alpha }\right) ^{\frac{\beta -2}{\alpha -2}}\left( \frac{\alpha (\beta -2)}{\beta (\alpha -2)}\right) ^{%
\frac{\beta -2}{2}}\mu _{\ast }\right\} ,
\end{equation*}%
the energy functional $E|_{S(c)}$ has a critical point $u^{-}\in S(c)$
with positive level $m(c).$
\end{theorem}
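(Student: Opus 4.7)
The plan is to combine the minimizer produced by Lemma \ref{L4-6} with the information about where this minimizer actually lives that is provided by Lemmas \ref{L4-4}, \ref{L4-2}, and \ref{L4-14}. The statement to prove asserts that $u^{-}$ is a critical point of $E|_{S(c)}$ at level $m(c)>0$, so the central issues are (a) upgrading membership of the minimizer from $D(c)$ to $S(c)$, and (b) converting the fact that $u^{-}$ minimizes a Pohozaev-constrained problem into a genuine critical point equation on $S(c)$.

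First, I would start from the $u^{-}\in D(c)$ given by Lemma \ref{L4-6}, which satisfies $u^{-}\in\mathcal{P}_{D}^{-}(c)$ and $E(u^{-})=m(c)>0$. To locate $u^{-}$ more precisely inside $\mathcal{P}_{D}^{-}(c)=\mathcal{P}_{S}^{-}(c)\cup\mathcal{P}_{D\backslash S}^{-}(c)$, I would invoke the strict comparison
\begin{equation*}
m(c)=\inf_{\mathcal{P}_{S}^{-}(c)}E<\inf_{\mathcal{P}_{D\backslash S}^{-}(c)}E
\end{equation*}
from Lemma \ref{L4-4}. If $u^{-}$ were in $\mathcal{P}_{D\backslash S}^{-}(c)$, then $E(u^{-})\ge\inf_{\mathcal{P}_{D\backslash S}^{-}(c)}E>m(c)$, a contradiction. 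Hence $u^{-}\in\mathcal{P}_{S}^{-}(c)$, so in particular $\Vert u^{-}\Vert_{2}^{2}=c$ and $u^{-}\in S(c)$.

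Next I would show that $u^{-}$ is a critical point of $E$ constrained to the Pohozaev manifold $\mathcal{P}_{S}(c)$. Since the conditions $g_{u}''(1)<0$ and $\Gamma A(u)^{\alpha/2}<B_{\alpha}(u)$ are both open in the $H^{1}$-topology, $\mathcal{P}_{S}^{-}(c)$ is relatively open inside $\mathcal{P}_{S}(c)$; consequently $u^{-}$ is an interior minimizer of $E|_{\mathcal{P}_{S}(c)}$ on $\mathcal{P}_{S}^{-}(c)$, and in particular a critical point of $E|_{\mathcal{P}_{S}(c)}$. Combining Lemma \ref{L4-2} (which gives $\mathcal{P}_{S}^{0}(c)=\emptyset$) with the natural constraint Lemma \ref{L4-14} then upgrades this to: $u^{-}$ is a critical point of $E|_{S(c)}$. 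The critical value is $E(u^{-})=m(c)>0$, which is exactly the statement of the theorem. The sign of the associated Lagrange multiplier $\lambda$ follows separately from Lemma \ref{L4-5}.

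The only step with genuine content here is ensuring the minimizer lies in $\mathcal{P}_{S}^{-}(c)$ rather than on the ``strict interior slice'' $\mathcal{P}_{D\backslash S}^{-}(c)$, since a priori the relaxation to $D(c)$ was introduced precisely so that the projection $s_{\ast}^{-}(u)$ always exists. That issue is resolved cleanly by the strict inequality in Lemma \ref{L4-4}, which was built for this purpose; once this membership is secured, the remaining transition from ``Pohozaev-constrained critical point'' to ``critical point on $S(c)$'' is purely algebraic and already packaged in Lemmas \ref{L4-2} and \ref{L4-14}. I do not anticipate any additional compactness or regularity difficulty beyond what Lemma \ref{L4-6} already provides.
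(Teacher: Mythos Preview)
Your proposal is correct and follows exactly the route the paper takes: its proof of Theorem \ref{T4-1} is the single line ``By Lemmas \ref{L4-6}--\ref{L4-14}, we easily obtain the result,'' and you have filled in precisely the details behind that sentence. The only minor remark is that the condition $\Gamma A(u)^{\alpha/2}<B_{\alpha}(u)$ is already part of the definition of $\mathcal{P}_{S}(c)$ itself (through $\mathcal{V}_{S}$), so for the relative openness of $\mathcal{P}_{S}^{-}(c)$ inside $\mathcal{P}_{S}(c)$ you only need the open condition $g_{u}''(1)<0$.
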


\begin{proof}
By Lemmas \ref{L4-6}--\ref{L4-14}, we easily obtain the result.
\end{proof}

\subsection{The global minimizer with negative level}

\begin{lemma}
\label{L5-0} Assume that $\mu_{\beta}<0$ and one of the two following conditions hold:\newline
$(i)$ $N\geq 3$ and $2<\alpha <\beta <\min \left\{ N,4\right\};$\newline
$(ii)$ $N\geq 5$ and $2<\alpha<\beta=4$.\newline
Then the energy functional $E$ is bounded from below and
coercive on $S(c)$ for all $c>0.$
\end{lemma}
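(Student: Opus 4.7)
The goal is to convert $E(u)$ on $S(c)$ into something of the form $\tfrac{1}{2}A(u)-C$. The naive move of bounding the indefinite term by Gagliardo--Nirenberg would give
\[
-\tfrac{1}{4}B_\alpha(u)\geq -\tfrac{\mathcal{S}_\alpha}{4}c^{(4-\alpha)/2}A(u)^{\alpha/2},
\]
and since $\alpha>2$ the exponent $\alpha/2>1$ swamps the $\tfrac12 A(u)$ term, so boundedness from below cannot be obtained this way. The key observation is that because $\mu_\beta<0$, the coefficient $-\mu_\beta/4=|\mu_\beta|/4$ makes the $B_\beta$ term positive in $E(u)$. My plan is therefore to use this positive $B_\beta$ contribution (rather than the kinetic energy) to absorb $B_\alpha$.

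The interpolation between $B_\alpha$ and $B_\beta$ is the core step. Setting $\theta=\alpha/\beta\in(0,1)$, a direct Hölder inequality applied to
\[
\frac{|u(x)|^{2}|u(y)|^{2}}{|x-y|^{\alpha}}=\left(\frac{|u(x)|^{2}|u(y)|^{2}}{|x-y|^{\beta}}\right)^{\theta}\bigl(|u(x)|^{2}|u(y)|^{2}\bigr)^{1-\theta}
\]
with conjugate exponents $1/\theta$ and $1/(1-\theta)$ yields, for every $u\in S(c)$,
\[
B_\alpha(u)\leq B_\beta(u)^{\alpha/\beta}\,\|u\|_2^{4(\beta-\alpha)/\beta}=c^{2(\beta-\alpha)/\beta}\,B_\beta(u)^{\alpha/\beta}.
\]
Since $\alpha/\beta<1$, Young's inequality supplies, for every $\eta>0$, a constant $C_\eta=C(\eta,c,\alpha,\beta)$ such that
\[
c^{2(\beta-\alpha)/\beta}B_\beta(u)^{\alpha/\beta}\leq \eta B_\beta(u)+C_\eta.
\]

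Choosing $\eta=|\mu_\beta|$ makes the $B_\beta$ contributions cancel in the combination $-\tfrac14 B_\alpha+\tfrac{|\mu_\beta|}{4}B_\beta$, leaving only the negative constant $-C_\eta/4$; consequently
\[
E(u)\geq \tfrac{1}{2}A(u)-\tfrac{C_\eta}{4}\qquad\text{for all }u\in S(c),
\]
which gives boundedness from below and, since $\|u\|_{H^1}^2=A(u)+c$, coercivity. The whole argument is independent of any Sobolev embedding, so it applies verbatim in both the energy subcritical regime (i) and the energy critical regime (ii) with $\beta=4$; in particular no smallness of $|\mu_\beta|$ nor any extra restriction on $c$ is needed at this stage. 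The only mild technical point — and the one worth double-checking — is confirming the Hölder step holds for all finite-mass $u\in H^{1}(\mathbb{R}^N)$ with both sides finite, which follows from the Hardy--Littlewood--Sobolev inequality of Lemma \ref{L2-10} guaranteeing $B_\beta(u),\,B_\alpha(u)<\infty$ in the admissible range of parameters.
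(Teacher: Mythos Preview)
Your proof is correct and in fact slightly cleaner than the paper's. Both arguments rest on the same idea---use the \emph{positive} $|\mu_\beta|B_\beta$ term to absorb $B_\alpha$ via a H\"older interpolation followed by Young's inequality---but you interpolate $B_\alpha$ between $B_\beta$ and the zeroth-order quantity $\|u\|_2^4=c^2$ (exponent $\theta=\alpha/\beta$), whereas the paper interpolates between $B_\beta$ and $B_1$ (exponent $\theta=(\alpha-1)/(\beta-1)$) and then needs an additional Gagliardo--Nirenberg step $B_1(u)\le \mathcal{S}_1 c^{3/2}A(u)^{1/2}$ to close the estimate. Consequently the paper only reaches
\[
E(u)\ \ge\ \tfrac12 A(u)-C\,c^{3/2}A(u)^{1/2},
\]
which is still coercive but weaker than your $E(u)\ge \tfrac12 A(u)-C$. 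Your route is more elementary, avoids invoking Lemma~\ref{L2-1} altogether, and works uniformly in both regimes $(i)$ and $(ii)$ for exactly the reason you state: no Sobolev embedding is needed and the H\"older step is valid once $B_\alpha,B_\beta<\infty$, which Lemma~\ref{L2-10} guarantees.
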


\begin{proof}
By the H\"{o}lder's inequality, we have
\begin{eqnarray*}
&&\int_{\mathbb{R}^{N}}\int_{\mathbb{R}^{N}}\frac{|u(x)|^{2}|u(y)|^{2}}{%
|x-y|^{\alpha }}dxdy \\
&=&\int_{\mathbb{R}^{N}}\int_{\mathbb{R}^{N}}\frac{\left(
|u(x)|^{2}|u(y)|^{2}\right) ^{\theta }}{|x-y|^{\theta \beta }}\frac{\left(
|u(x)|^{2}|u(y)|^{2}\right) ^{1-\theta }}{|x-y|^{1-\theta }}dxdy \\
&\leq &\left( \int_{\mathbb{R}^{N}}\int_{\mathbb{R}^{N}}\frac{%
|u(x)|^{2}|u(y)|^{2}}{|x-y|^{\beta }}dxdy\right) ^{\theta }\left( \int_{%
\mathbb{R}^{N}}\int_{\mathbb{R}^{N}}\frac{|u(x)|^{2}|u(y)|^{2}}{|x-y|}%
dxdy\right) ^{1-\theta },
\end{eqnarray*}%
where $0<\theta =\frac{\alpha -1}{\beta -1}<1$. Next, we make use of Young's
inequality,%
\begin{equation*}
ab\leq \varepsilon a^{r}+\varepsilon ^{-\frac{r^{\prime }}{r}}b^{r^{\prime }}%
\text{ for any }a,b,\varepsilon >0,
\end{equation*}%
where $r>1$ and $r^{\prime }=\frac{r}{r-1}$. Then
\begin{eqnarray*}
&&\left( \int_{\mathbb{R}^{N}}\int_{\mathbb{R}^{N}}\frac{|u(x)|^{2}|u(y)|^{2}%
}{|x-y|^{\beta }}dxdy\right) ^{\theta }\left( \int_{\mathbb{R}^{N}}\int_{%
\mathbb{R}^{N}}\frac{|u(x)|^{2}|u(y)|^{2}}{|x-y|}dxdy\right) ^{1-\theta } \\
&\leq &\varepsilon \int_{\mathbb{R}^{N}}\int_{\mathbb{R}^{N}}\frac{%
|u(x)|^{2}|u(y)|^{2}}{|x-y|^{\beta }}dxdy+\varepsilon ^{-\frac{\theta }{%
1-\theta }}\int_{\mathbb{R}^{N}}\int_{\mathbb{R}^{N}}\frac{%
|u(x)|^{2}|u(y)|^{2}}{|x-y|}dxdy.
\end{eqnarray*}
This shows that
\begin{equation*}
B_{\alpha }(u)\leq \varepsilon B_{\beta }(u)+\varepsilon ^{-\frac{\theta }{%
1-\theta }}B_{1}(u).
\end{equation*}%
Choosing $\varepsilon =\frac{|\mu _{\beta }|}{2}$, we have
\begin{eqnarray*}
E(u) &=&\frac{1}{2}A(u)+\frac{|\mu _{\beta }|}{4}B_{\beta }(u)-\frac{1}{4}%
B_{\alpha }(u) \\
&\geq &\frac{1}{2}A(u)+\frac{|\mu _{\beta }|}{4}B_{\beta }(u)-\frac{%
\varepsilon }{4}B_{\beta }(u)-\frac{\varepsilon ^{-\frac{\theta }{1-\theta }}%
}{4}B_{1}(u) \\
&\geq &\frac{1}{2}A(u)+\frac{|\mu _{\beta }|}{8}B_{\beta }(u)-\frac{1}{4}%
\left( \frac{|\mu _{\beta }|}{2}\right) ^{-\frac{\theta }{1-\theta }}B_{1}(u)
\\
&\geq &\frac{1}{2}A(u)-\frac{1}{4}\left( \frac{|\mu _{\beta }|}{2}\right) ^{-%
\frac{\theta }{1-\theta }}\Vert u\Vert _{2}^{3}A(u)^{1/2},
\end{eqnarray*}%
which implies that the energy functional $E$ is bounded from below and
coercive on $S(c)$ for all $c>0$. This completes the proof.
\end{proof}

\begin{lemma}
\label{L5-1} Assume that $\mu_{\beta}<0$ and one of the two following conditions hold:\newline
$(i)$ $N\geq 3$ and $2<\alpha <\beta <\min \left\{ N,4\right\};$\newline
$(ii)$ $N\geq 5$ and $2<\alpha<\beta=4$.\newline
Then for each $u\in \mathcal{V}_{S}$, there holds $%
\inf_{s>0}E(u_{s})<0.$
\end{lemma}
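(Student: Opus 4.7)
The plan is to find an explicit dilation parameter $s_0>0$ such that $g_u(s_0)=E(u_{s_0})<0$, from which $\inf_{s>0}E(u_s)\le g_u(s_0)<0$ follows immediately. Since $\mu_\beta<0$, the map $g_u$ has the form
\begin{equation*}
g_u(s)=\frac{s^2}{2}A(u)-\frac{s^\alpha}{4}B_\alpha(u)+\frac{|\mu_\beta|s^\beta}{4}B_\beta(u),
\end{equation*}
so the inequality $g_u(s)<0$ is equivalent, after dividing by $s^2/4$ and setting $t=s^{\alpha-2}$, $p=(\beta-2)/(\alpha-2)>1$, to
\begin{equation*}
h(t):=tB_\alpha(u)-|\mu_\beta|t^{p}B_\beta(u)>2A(u).
\end{equation*}
So it suffices to prove $\sup_{t>0}h(t)>2A(u)$.

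First I would maximize $h$ by a straightforward one-variable calculus step. Solving $h'(t)=B_\alpha(u)-p|\mu_\beta|t^{p-1}B_\beta(u)=0$ gives the unique critical point
\begin{equation*}
t_0=\left(\frac{(\alpha-2)B_\alpha(u)}{(\beta-2)|\mu_\beta|B_\beta(u)}\right)^{(\alpha-2)/(\beta-\alpha)},
\end{equation*}
at which $h(t_0)=\frac{\beta-\alpha}{\beta-2}\,t_0\,B_\alpha(u)$. Substituting $t_0$ explicitly and using $1/(p-1)=(\alpha-2)/(\beta-\alpha)$, the inequality $h(t_0)>2A(u)$ rearranges, after raising to power $(\beta-\alpha)/(\beta-2)$, to
\begin{equation*}
B_\alpha(u)>\left[\frac{2(\beta-2)}{\beta-\alpha}\right]^{(\beta-\alpha)/(\beta-2)}\left(\frac{(\beta-2)|\mu_\beta|B_\beta(u)}{\alpha-2}\right)^{(\alpha-2)/(\beta-2)}A(u)^{(\beta-\alpha)/(\beta-2)}.
\end{equation*}

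Next I would invoke the Gagliardo--Nirenberg bound from Lemma \ref{L2-1}, namely $B_\beta(u)\le \mathcal S_\beta A(u)^{\beta/2}c^{(4-\beta)/2}$, and compute that the exponent of $A(u)$ on the right-hand side collapses to $\alpha/2$. Thus the required inequality takes the form $C_1 A(u)^{\alpha/2}<B_\alpha(u)$ for the explicit constant
\begin{equation*}
C_1=\left[\frac{2(\beta-2)}{\beta-\alpha}\right]^{(\beta-\alpha)/(\beta-2)}\left(\frac{(\beta-2)|\mu_\beta|\mathcal S_\beta c^{(4-\beta)/2}}{\alpha-2}\right)^{(\alpha-2)/(\beta-2)}.
\end{equation*}
Comparing with the definition of $\Gamma$, a direct calculation shows $C_1/\Gamma=(2/\beta)^{(\alpha-2)/(\beta-2)}<1$, so $C_1<\Gamma$. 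Hence the membership $u\in\mathcal V_S$, which by definition gives $\Gamma A(u)^{\alpha/2}<B_\alpha(u)$, yields $C_1A(u)^{\alpha/2}<B_\alpha(u)$ a fortiori.

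The main (and only substantive) obstacle is the bookkeeping in the third step: the constant $\Gamma$ in the definition of $\mathcal V_S$ was calibrated using Gagliardo--Nirenberg applied to $B_\beta$, so one has to verify that the same calibration is at least as strong as what is needed to make $h(t_0)>2A(u)$. Once the ratio $C_1/\Gamma$ is computed and seen to be strictly less than one (which uses only $\beta>2$ and $\alpha>2$), the conclusion is immediate: $h(t_0)>2A(u)$, so for $s_0:=t_0^{1/(\alpha-2)}$ one has $g_u(s_0)<0$, and therefore $\inf_{s>0}E(u_s)\le E(u_{s_0})<0$, finishing the proof.
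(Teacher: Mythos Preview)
Your proof is correct and follows essentially the same strategy as the paper: a one-variable optimization along the fibering map, combined with the Gagliardo--Nirenberg bound for $B_\beta$ and the defining inequality $\Gamma A(u)^{\alpha/2}<B_\alpha(u)$ of $\mathcal V_S$. The only cosmetic difference is that the paper factors out $s^\beta$ and minimizes $k(s)=\tfrac{s^{2-\beta}}{2}A(u)-\tfrac{s^{\alpha-\beta}}{4}B_\alpha(u)$, whereas you divide by $s^2/4$ and maximize $h(t)$; your explicit computation of the ratio $C_1/\Gamma=(2/\beta)^{(\alpha-2)/(\beta-2)}<1$ makes transparent the slack that the paper uses implicitly.
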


\begin{proof}
Let $u\in \mathcal{V}_{S}.$ Then for any $s>0$, we have
\begin{equation*}
E(u_{s})=s^{\beta }\left[ \frac{s^{2-\beta }}{2}A\left( u\right) -\frac{%
s^{\alpha -\beta }}{4}B_{\alpha }\left( u\right) +\frac{|\mu _{\beta }|}{4}%
B_{\beta }\left( u\right) \right] .
\end{equation*}%
Let
\begin{equation*}
k(s):=\frac{s^{2-\beta }}{2}A\left( u\right) -\frac{s^{\alpha -\beta }}{4}%
B_{\alpha }\left( u\right) .
\end{equation*}%
Clearly, $E(u_{s})=0$ if and only if $k\left( s\right) +\frac{|\mu _{\beta }|%
}{4}B_{\beta }\left( u\right) =0.$ It is easily seen that $%
k(s_{0})=0,\lim_{s\rightarrow 0^{+}}k(s)=\infty $ and $\lim_{s\rightarrow
\infty }k(s)=0,$ where $s_{0}:=\left( \frac{2A(u)}{B_{\alpha }(u)}\right)
^{1/(\alpha -2)}$. By calculating the derivative of $k(s)$, we get
\begin{equation*}
k^{\prime }(s)=s^{1-\beta }\left[ \frac{(\beta -\alpha )s^{\alpha -2}}{4}%
B_{\alpha }(u)-\frac{\beta -2}{2}A(u)\right] .
\end{equation*}%
This indicates that $k(s)$ is decreasing when $0<s<\left( \frac{2(\beta
-2)A(u)}{(\beta -\alpha )B_{\alpha }(u)}\right) ^{1/(\alpha -2)}$ and is
increasing when $s>\left( \frac{2(\beta -2)A(u)}{(\beta -\alpha )B_{\alpha
}(u)}\right) ^{1/(\alpha -2)}$, and so we have
\begin{eqnarray*}
\inf_{s>0}k(s) &=&-\frac{\alpha -2}{2(\beta -\alpha )}\left[ \frac{(\beta
-\alpha )B_{\alpha }(u)}{2(\beta -2)A(u)}\right] ^{\frac{\beta -2}{\alpha -2}%
}A(u) \\
&<&-\frac{|\mu _{\beta }|\mathcal{S}_{\beta }}{4}c^{\frac{4-\beta }{2}}A(u)^{%
\frac{\beta }{2}} \\
&<&-\frac{|\mu _{\beta }|}{4}B_{\beta }\left( u\right) ,
\end{eqnarray*}%
which implies that there exist two constants $\hat{s}^{(i)}>0$ $(i=1,2)$
satisfying
\begin{equation*}
\hat{s}^{(1)}<\left( \frac{2(\beta -2)A(u)}{(\beta -\alpha )B_{\alpha }(u)}%
\right) ^{1/(\alpha -2)}<\hat{s}^{(2)}
\end{equation*}%
such that $E\left( u_{\hat{s}^{(i)}}\right) =0$ for $i=1,2.$ Moreover, we
get
\begin{equation*}
E\left[ \left( \frac{2(\beta -2)A(u)}{(\beta -\alpha )B_{\alpha }(u)}\right)
^{1/(\alpha -2)}\right] <0,
\end{equation*}%
which implies that $\inf_{s>0}E(u_{s})<0.$ The proof is complete.
\end{proof}

By Lemmas \ref{L5-0} and \ref{L5-1}, we have%
\begin{equation*}
-\infty <\sigma (c):=\inf_{u\in S(c)}E(u)<0.
\end{equation*}

\begin{lemma}
\label{L5-2} $(i)$ For any $c>c^{\prime }>0$ one has
\begin{equation*}
\sigma (c)\leq \frac{c}{c^{\prime }}\sigma (c^{\prime }).
\end{equation*}%
If $\sigma (c^{\prime })$ is achieved, then the inequality is strict.\newline
$(ii)$ The function $c\mapsto \sigma (c)$ is non-increasing and continuous.
\end{lemma}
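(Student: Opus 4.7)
The plan is to obtain (i) by a simple multiplicative rescaling, and then to deduce (ii) from (i) combined with the coercivity estimate in Lemma \ref{L5-0}. For (i), fix $c>c'>0$, set $t:=\sqrt{c/c'}>1$, and for any $u\in S(c')$ consider the test function $v:=tu\in S(c)$. A direct expansion gives
\begin{equation*}
E(v)=\frac{t^{2}}{2}A(u)-t^{4}P(u)=t^{2}E(u)+t^{2}(1-t^{2})P(u),
\qquad P(u):=\tfrac{1}{4}B_{\alpha}(u)+\tfrac{\mu_\beta}{4}B_{\beta}(u).
\end{equation*}
By Lemma \ref{L5-1}, $\sigma(c')<0$, so any $u\in S(c')$ with $E(u)<0$ satisfies $P(u)>\tfrac{1}{2}A(u)>0$; since $1-t^{2}<0$, the identity yields $E(v)<t^{2}E(u)=(c/c')E(u)$. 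Applying this to a minimizing sequence $\{u_n\}\subset S(c')$ (so that $E(u_n)<0$ for $n$ large) and to $v_n:=tu_n\in S(c)$, one obtains $\sigma(c)\leq E(v_n)<(c/c')E(u_n)\to (c/c')\sigma(c')$. If $\sigma(c')$ is attained at some $u^{\star}$, applying the same computation directly to $u^{\star}$ produces the strict inequality in one step.

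For (ii), monotonicity follows immediately from (i): if $c>c'>0$, then $\sigma(c)\leq (c/c')\sigma(c')\leq \sigma(c')$ since $\sigma(c')<0$ and $c/c'>1$. For continuity at a fixed $c_0>0$, I argue upper and lower semi-continuity separately using the mass-adjusting dilation $u\mapsto \sqrt{c/\|u\|_2^2}\,u$. For upper semi-continuity, fix $u_0\in S(c_0)$ and set $u_n:=\sqrt{c_n/c_0}\,u_0\in S(c_n)$; since $E(u_n)$ depends polynomially on $\sqrt{c_n/c_0}$ and tends to $E(u_0)$, we get $\limsup_n\sigma(c_n)\leq E(u_0)$, and taking the infimum over $u_0\in S(c_0)$ yields $\limsup_n\sigma(c_n)\leq \sigma(c_0)$. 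For lower semi-continuity, choose almost-minimizers $u_n\in S(c_n)$ with $E(u_n)\leq \sigma(c_n)+1/n$; since $\sigma(c_n)$ is bounded (by monotonicity together with upper semi-continuity), Lemma \ref{L5-0} applied uniformly in $n$ forces $\|u_n\|_{H^1}$ bounded, and the Hardy--Littlewood--Sobolev inequality then bounds $B_{\alpha}(u_n),B_{\beta}(u_n)$. Rescaling $w_n:=\sqrt{c_0/c_n}\,u_n\in S(c_0)$, every term in $E(w_n)-E(u_n)$ carries a factor of the form $(c_0/c_n)^{k}-1\to 0$ multiplying a bounded quantity, so $E(w_n)=E(u_n)+o(1)$. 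Hence $\sigma(c_0)\leq E(w_n)\leq \sigma(c_n)+1/n+o(1)$, which gives $\sigma(c_0)\leq \liminf_n\sigma(c_n)$.

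The only non-routine step is verifying the uniform $H^{1}$-boundedness of the almost-minimizers $u_n$ in the lower semi-continuity argument; this hinges on the fact that the coercivity constant in Lemma \ref{L5-0} depends only on $|\mu_\beta|$ and on $\|u\|_2=\sqrt{c_n}$, both of which are uniformly controlled for $c_n$ in a neighborhood of $c_0$. Everything else reduces to direct arithmetic with the scaling $u\mapsto tu$.
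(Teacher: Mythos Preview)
Your argument is correct, and for part (ii) it is essentially the paper's argument carried out with slightly more care: both rescale by amplitude multiplication $u\mapsto\sqrt{c/c_n}\,u$, but you explicitly justify the $o(1)$ term by invoking the coercivity from Lemma~\ref{L5-0} to bound the almost-minimizers in $H^1$, whereas the paper simply writes $E(v_n)=E(u_n)+o(1)$ without comment.

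For part (i) you take a genuinely different route. The paper uses the spatial dilation $v(x)=u(z^{-1/N}x)$ with $z=c/c'$, which scales $A$, $B_\alpha$, $B_\beta$ with three distinct exponents; the inequality $E(v)<zE(u)$ then follows because two of the correction terms are automatically negative and the third uses $E(u)<0$. You instead use amplitude multiplication $v=tu$ with $t=\sqrt{c/c'}$, which collapses the nonlocal terms into a single quantity $P(u)=\tfrac14 B_\alpha(u)+\tfrac{\mu_\beta}{4}B_\beta(u)$, and you then need the observation that $P(u)>0$ whenever $E(u)<0$ (which follows immediately from $E(u)=\tfrac12 A(u)-P(u)$). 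Your route is a bit shorter and avoids tracking three separate scaling powers; the paper's dilation has the minor advantage that two of the three correction terms are negative regardless of the sign of $E(u)$, though both proofs ultimately rely on $\sigma(c')<0$ (established just before Lemma~\ref{L5-2}) to handle the remaining term and to guarantee that a near-minimizer satisfies $E(u)<0$.
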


\begin{proof}
$(i)$ Let $z:=\frac{c}{c^{\prime }}>1$. For any $\varepsilon >0$, there
exists $u\in S(c^{\prime })$ such that $E(u)\leq \sigma (c^{\prime
})+\varepsilon $. Clearly, $v(x):=u(z^{-1/N}x)\in S(c)$ and then
\begin{eqnarray}
\sigma (c) &\leq &E(v)=z^{\frac{2N-\alpha }{N}}E(u)+\frac{1}{2}z^{\frac{N-2}{%
N}}\left( 1-z^{\frac{N+2-\alpha }{N}}\right) A(u)+\frac{|\mu _{\beta }|}{4}%
z^{\frac{2N-\beta }{N}}\left( 1-z^{\frac{\beta -\alpha }{N}}\right) B_{\beta
}(u)  \notag \\
&<&zE(u)\leq \frac{c}{c^{\prime }}(\sigma (c^{\prime })+\varepsilon ).
\label{e8-2}
\end{eqnarray}%
Since $\varepsilon >0$ is arbitrary, the inequality $\sigma (c)\leq \frac{c}{%
c^{\prime }}\sigma (c^{\prime })$ holds. If $\sigma (c^{\prime })$ is
achieved, for example, at some $u\in S(c^{\prime })$, then we can let $%
\varepsilon =0$ in (\ref{e8-2}) and thus the strict inequality follows.

$(ii)$ It is clear that $\sigma (c)$ is non-increasing. To show the
continuity, we assume that $c_{n}\rightarrow c$ as $n\rightarrow \infty $.
It follows from the definition of $\sigma \left( c\right) $ that for any $%
\varepsilon >0$, there exists $u_{n}\in S(c_{n})$ such that
\begin{equation}
E(u_{n})\leq \sigma \left( c\right) +\varepsilon .  \label{e8-20}
\end{equation}%
Let $v_{n}:=\frac{c}{c_{n}}u_{n}$. Taking into account that $v_{n}\in S(c)$
and $\frac{c}{c_{n}}\rightarrow 1$, we have
\begin{equation}
\sigma \left( c\right) \leq E(v_{n})=E(u_{n})+o(1).  \label{e8-21}
\end{equation}%
Combining (\ref{e8-20}) and (\ref{e8-21}), one has
\begin{equation*}
\sigma \left( c\right) \leq \sigma \left( c_{n}\right) +\varepsilon +o(1).
\end{equation*}%
By inverting the argument, we obtain
\begin{equation*}
\sigma \left( c_{n}\right) \leq \sigma \left( c\right) +\varepsilon +o(1).
\end{equation*}%
Therefore, since $\varepsilon >0$ is arbitrary, we deduce that $\sigma
\left( c_{n}\right) \rightarrow \sigma \left( c\right) $ as $n\rightarrow
\infty $. The proof is complete.
\end{proof}

\begin{theorem}
\label{T5-1} Assume that $\mu_{\beta}<0$ and one of the two following conditions hold:\newline
$(i)$ $N\geq 3$ and $2<\alpha <\beta <\min \left\{ N,4\right\};$\newline
$(ii)$ $N\geq 5$ and $2<\alpha<\beta=4$.\newline
Then for
\begin{equation*}
0<|\mu _{\beta }|<\min \left\{ \mu _{\ast },\frac{\alpha (\beta -2)}{\beta
(\beta -\alpha )}\left( \frac{2}{\alpha }\right) ^{\frac{\beta -2}{\alpha -2}%
}\left( \frac{\alpha (\beta -2)}{\beta (\alpha -2)}\right) ^{\frac{\beta -2}{%
2}}\mu _{\ast }\right\} ,
\end{equation*}%
then $\sigma (c)=\inf_{u\in S(c)}E(u)$ is achieved at $\tilde{u}\in S(c).$
\end{theorem}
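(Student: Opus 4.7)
The plan is to prove that $\sigma(c)$ is attained by the direct method applied to a minimizing sequence, using the profile decomposition in Proposition \ref{P3-18} as the compactness tool and the strict subadditivity encoded in Lemma \ref{L5-2}(i) to rule out splitting of mass.

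First, I observe that under the hypotheses the ground state $\omega_0$ of $(HE_\infty)$ belongs to $\mathcal{V}_S$, so Lemma \ref{L5-1} ensures $\sigma(c) < 0$, while Lemma \ref{L5-0} shows that $E$ is bounded from below and coercive on $S(c)$. Consequently, any minimizing sequence $\{u_n\} \subset S(c)$ for $\sigma(c)$ is bounded in $H^1(\mathbb{R}^N)$, and I may apply Proposition \ref{P3-18} to extract profiles $\{\bar{u}_j\}_{j \geq 0} \subset H^1(\mathbb{R}^N)$ and orthogonal translations $\{y_n^j\}$. Writing $c_j := \|\bar{u}_j\|_2^2$ and $I := \{j \geq 0 : \bar{u}_j \neq 0\}$, the $L^2$ Brezis--Lieb identity (a standard consequence of orthogonality and weak convergence in $H^1$) together with (\ref{e7-2}) and (\ref{e7-5}) yields $\sum_{j \in I} c_j \leq c$ and
\begin{equation*}
\sigma(c) = \lim_{n \to \infty} E(u_n) = \sum_{j \in I} E(\bar{u}_j) + \tfrac{1}{2} \lim_{i \to \infty}\limsup_{n \to \infty} A(z_n^i),
\end{equation*}
since the remainder $z_n^i$ has vanishing $L^q$ norms for $2 < q < 2^{*}$ as $i \to \infty$, hence vanishing nonlocal contributions by (\ref{e7-5}). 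Vanishing is ruled out: if $I = \emptyset$, then $B_\alpha(u_n), B_\beta(u_n) \to 0$, forcing $\sigma(c) = \liminf_n E(u_n) \geq 0$, contradicting $\sigma(c) < 0$.

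Next, each non-trivial profile gives $E(\bar{u}_j) \geq \sigma(c_j)$, while Lemma \ref{L5-2}(i), rewritten as the monotonicity of $c \mapsto \sigma(c)/c$, yields $\sigma(c_j) \geq (c_j/c)\sigma(c)$. Combining with $\sum_{j \in I} c_j \leq c$ and $\sigma(c) < 0$ produces
\begin{equation*}
\sigma(c) \geq \sum_{j \in I} E(\bar{u}_j) \geq \sum_{j \in I} \sigma(c_j) \geq \frac{1}{c}\Bigl(\sum_{j \in I} c_j\Bigr) \sigma(c) \geq \sigma(c),
\end{equation*}
so the whole chain consists of equalities: $E(\bar{u}_j) = \sigma(c_j)$ for every $j \in I$, $\sum_{j \in I} c_j = c$, $\lim_{i,n} A(z_n^i) = 0$, and $\sigma(c_j)/c_j = \sigma(c)/c$ for every $j \in I$.

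Finally, I rule out dichotomy. If $|I| \geq 2$, pick $j_0 \in I$; then $c_{j_0} < c$, and since $\sigma(c_{j_0})$ is achieved at $\bar{u}_{j_0}$, the strict version of Lemma \ref{L5-2}(i) forces $\sigma(c)/c < \sigma(c_{j_0})/c_{j_0}$, contradicting the equality just obtained. The same reasoning excludes $|I| = 1$ with $c_{j_0} < c$. Hence $I = \{j_0\}$ is a singleton with $c_{j_0} = c$, so $\tilde{u} := \bar{u}_{j_0} \in S(c)$ and $E(\tilde{u}) = \sigma(c)$. The mass identity $\|u_n(\cdot + y_n^{j_0})\|_2^2 = c = \|\tilde{u}\|_2^2$ upgrades weak $H^1$-convergence to strong $L^2$-convergence; together with $A(\tilde{u}) \leq \liminf_n A(u_n(\cdot + y_n^{j_0}))$ and the energy identity $E(\tilde{u}) = \lim_n E(u_n)$, this yields $A(u_n(\cdot + y_n^{j_0})) \to A(\tilde{u})$, hence strong $H^1$-convergence. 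The main obstacle is the strict subadditivity step, whose strictness is triggered precisely when each limiting profile is itself a minimizer on $S(c_j)$; dovetailing this with the simultaneous Brezis--Lieb-type decomposition (\ref{e7-5}) for both exponents $\alpha$ and $\beta$ is the key technical point, already established in Proposition \ref{P3-18}.
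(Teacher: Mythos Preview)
Your argument is correct but uses heavier machinery than the paper. The paper does not invoke the full profile decomposition of Proposition \ref{P3-18} here; instead it performs a single weak-limit extraction: translate the minimizing sequence so that $u_n(\cdot + y_n) \rightharpoonup \tilde{u} \neq 0$, set $w_n := u_n(\cdot + y_n) - \tilde{u}$ and $c' := \|\tilde{u}\|_2^2$, apply the nonlocal Brezis--Lieb lemma to split $\sigma(c) = E(\tilde{u}) + \lim_n E(w_n)$, and then combine $\lim_n E(w_n) \geq \sigma(c - c')$ with Lemma \ref{L5-2} to force $c' = c$. Your route packages the whole dichotomy analysis into the identities (\ref{e7-2})--(\ref{e7-5}), handling all potential bubbles simultaneously; the paper's route is more elementary and avoids the infinite-sum bookkeeping, but both rest on the same strict subadditivity from Lemma \ref{L5-2}(i). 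One caveat: in case $(ii)$ with $\beta = 4$, Proposition \ref{P3-18} states (\ref{e7-5}) only for $\gamma < \min\{N,4\}$, so your appeal to it for $B_\beta$ is not literally covered; however, since $\mu_\beta < 0$ the residual contribution $\tfrac{|\mu_\beta|}{4}B_4(z_n^i) \geq 0$ can simply be retained on the right-hand side of the energy decomposition, and the chain of inequalities still collapses to equalities, so the argument goes through unchanged.
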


\begin{proof}
Fix
\begin{equation*}
0<|\mu _{\beta }|<\min \left\{ \mu _{\ast },\frac{\alpha (\beta -2)}{\beta
(\beta -\alpha )}\left( \frac{2}{\alpha }\right) ^{\frac{\beta -2}{\alpha -2}%
}\left( \frac{\alpha (\beta -2)}{\beta (\alpha -2)}\right) ^{\frac{\beta -2}{%
2}}\mu _{\ast }\right\} ,
\end{equation*}%
and let $\left\{ u_{n}\right\} \subset S(c)$ be any minimizing sequence with
respect to $\sigma (c)$. It is clear that $\left\{ u_{n}\right\} $ is
bounded in $H^{1}(\mathbb{R}^{N})$. Since $\sigma (c)<0$, we deduce that $%
\left\{ u_{n}\right\} $ is non-vanishing, namely
\begin{equation}
\lim_{n\rightarrow \infty }\left( \sup_{y\in \mathbb{R}^{N}}%
\int_{B_{1}(y)}|u_{n}|^{2}dx\right) >0.  \label{e8-1}
\end{equation}%
Indeed, if (\ref{e8-1}) were not true, then $u_{n}\rightarrow 0$ in $L^{r}(%
\mathbb{R}^{N})$ for $2<q<2^{\ast }$ by \cite[Lemma I.1]{L}, which implies
that%
\begin{equation*}
B_{\alpha }(u_{n})\rightarrow 0
\text{ as }n\rightarrow \infty .
\end{equation*}%
Thus we have
\begin{equation*}
0>\sigma (c)=\lim_{n\rightarrow \infty }E(u_{n})\geq\frac{1}{2}\int_{\mathbb{R}%
^{N}}|\nabla u_{n}|^{2}dx\geq 0,
\end{equation*}%
which is a contradiction. Since $\left\{ u_{n}\right\} $ is non-vanishing,
there exists a sequence $\{y_{n}\}\subset \mathbb{R}^{N}$ and a nontrivial
function $\tilde{u}\in H^{1}(\mathbb{R}^{N})$ such that up to a subsequence $%
u_{n}(\cdot +y_{n})\rightharpoonup \tilde{u}$ and $u_{n}(\cdot
+y_{n})\rightarrow \tilde{u}$ a.e. on $\mathbb{R}^{N}$. Set $c^{\prime
}:=\Vert \tilde{u}\Vert _{2}^{2}\leq c$ and $w_{n}:=u_{n}(\cdot +y_{n})-%
\tilde{u}$. Then we have $\lim_{n\rightarrow \infty }\Vert w_{n}\Vert
_{2}^{2}=c-c^{\prime }$ and
\begin{equation}
\sigma (c)=\lim_{n\rightarrow \infty }E(u_{n})=\lim_{n\rightarrow \infty
}E(w_{n}+\tilde{u})=E(\tilde{u})+\lim_{n\rightarrow \infty }E(w_{n}),
\label{e8-3}
\end{equation}%
where we have used the Brezis-Lieb Lemma. Let $\zeta _{n}:=\Vert w_{n}\Vert
_{2}^{2}$ for each $n\in \mathbb{N}^{+}$. If $\lim_{n\rightarrow \infty
}\zeta _{n}>0$, then we have $0<c^{\prime }<c$. According to the definition
of $\sigma (\zeta _{n})$, we obtain
\begin{equation}
\lim_{n\rightarrow \infty }E(w_{n})\geq \lim_{n\rightarrow \infty }\sigma
(\zeta _{n})=\sigma (c-c^{\prime }).  \label{e8-4}
\end{equation}%
It follows from (\ref{e8-3}), (\ref{e8-4}) and Lemma \ref{L5-2} that
\begin{equation*}
\sigma (c)\geq E(\tilde{u})+\sigma (c-c^{\prime })\geq \sigma (c^{\prime
})+\sigma (c-c^{\prime })\geq \frac{c^{\prime }}{c}\sigma (c)+\frac{%
c-c^{\prime }}{c}\sigma (c)=\sigma (c),
\end{equation*}%
which implies that $E(\tilde{u})=\sigma (c)-\sigma (c-c^{\prime }),$ leading
to $E(\tilde{u})=\sigma (c^{\prime }).$ This shows that $\sigma (c^{\prime
}) $ is achieved at $\tilde{u}\in S(c^{\prime })$. Moreover, there holds
\begin{equation*}
\sigma (c)\geq \sigma (c^{\prime })+\sigma (c-c^{\prime })>\frac{c^{\prime }%
}{c}\sigma (c)+\frac{c-c^{\prime }}{c}\sigma (c)=\sigma (c),
\end{equation*}%
which is a contradiction. So, $c=c^{\prime }$. Then $\tilde{u}\in S(c)$ and
\begin{equation*}
\sigma (c)=\lim_{n\rightarrow \infty }E(u_{n})\geq \lim_{n\rightarrow \infty
}E(u_{n})-\lim_{n\rightarrow \infty }E(w_{n})=E(\tilde{u})\geq \sigma (c),
\end{equation*}%
where we have used $\lim_{n\rightarrow \infty }E(w_{n})\geq 0,$ since $%
\lim_{n\rightarrow \infty }B_{\alpha }(w_{n})=0$. This implies that $\sigma (c)$ is achieved at $\tilde{%
u}$. This completes the proof.
\end{proof}

\textbf{We are ready to prove Theorem \ref{t2}:} The results can be obtained
by Theorems \ref{T4-1} and \ref{T5-1}. Moreover, by Remark \ref{R4-1}, we note that $\tilde{u}\in \mathcal{P}^{+}\left( c\right) :=\left\{ \mathcal{P}\left( c\right):g_{u}^{\prime \prime }\left( 1\right)
>0\right\}$, where $\mathcal{P}\left( c\right)$ is the Pohozaev manifold as in (\ref{e1-10}). Then by (\ref{e2-7}), we have
\begin{equation}\label{e3-22}
\Vert \nabla \tilde{u}\Vert _{2}^{2}>\left( \frac{4(\alpha -2)}{|\mu _{\beta
}|\mathcal{S}_{\beta }\beta (\beta -\alpha )c^{\frac{4-\beta }{2}}}\right) ^{%
\frac{2}{\beta -2}}.
\end{equation}%
And since
\begin{equation*}
\Vert \nabla u^{-}\Vert _{2}^{2}<\frac{2\alpha ^{2}(\beta -2)m_{\infty }}{%
\beta (\alpha -2)^{2}}\left[ \frac{\alpha (\beta -2)}{\beta (\beta -\alpha )}%
\right] ^{2/(\beta -2)},
\end{equation*}%
for $|\mu _{\beta }|<\mu _{\ast }$, we have
\begin{equation*}
\Vert \nabla u^{-}\Vert _{2}^{2}<\Vert \nabla \tilde{u}\Vert _{2}^{2}.
\end{equation*}
Again by (\ref{e3-22}), we get
\begin{equation*}
\Vert \nabla \tilde{u}\Vert _{2}^{2}>\left( \frac{4(\alpha -2)}{|\mu _{\beta
}|\mathcal{S}_{\beta }\beta (\beta -\alpha )c^{\frac{4-\beta }{2}}}\right) ^{%
\frac{2}{\beta -2}}\rightarrow +\infty,\,\text{ as }|\mu _{\beta }|\rightarrow 0^{+}.
\end{equation*}

Next, we give the mountain pass type characterization of the nontrivial solutions of (\ref{e1-1}) with $\lambda>0$.

\begin{proposition}\label{p3-1}
For any nontrivial critical point $u_{\lambda}\in H^{1}(\mathbb{R}^{N})$ of action functional $E_{\lambda}$, any $\delta>0$ and any $M>0$, then there exist a number $T=T(u_{\lambda},\delta,M)>0$ and a continuous path $\kappa: [0,T]\rightarrow H^{1}(\mathbb{R}^{N})$ satisfying\newline
$(i)$ $\kappa(0)=0, E_{\lambda}(\kappa(T))<-1, \max_{t\in [0,T]}E_{\lambda}(\kappa(t))=E_{\lambda}(u_{\lambda})$;\newline
$(ii)$ $\kappa(\tau)=u_{\lambda}$ for some $\tau\in (0,T)$, and $E_{\lambda}(\kappa(t))<E_{\lambda}(u_{\lambda})$ for any $t\in[0,T]$ such that $\|\kappa(t)-u_{\lambda}\|_{H^{1}(\mathbb{R}^{N})}\geq\delta$;\newline
$(iii)$ $\varpi(t):=\|\kappa(t)\|_{2}^{2}$ is a strictly increasing continuous function with $\varpi(T)>M$.
\end{proposition}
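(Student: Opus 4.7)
The plan is to take the one-parameter family given by pointwise multiplication, namely
\begin{equation*}
\kappa(t):=t\, u_{\lambda},\qquad t\in[0,T],
\end{equation*}
where $T>0$ will be chosen at the end, and to verify that all three requirements reduce to elementary properties of a single scalar function.

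First, since $u_{\lambda}$ is a nontrivial critical point of $E_{\lambda}$, testing the equation against $u_{\lambda}$ itself yields the Nehari-type identity
\begin{equation*}
K:=A(u_{\lambda})+\lambda\|u_{\lambda}\|_{2}^{2}=B_{\alpha}(u_{\lambda})+\mu_{\beta}B_{\beta}(u_{\lambda}),
\end{equation*}
and $K>0$ because $\lambda>0$ and $u_{\lambda}\not\equiv 0$. A direct computation using the homogeneities of $A$, $\|\cdot\|_{2}^{2}$, $B_{\alpha}$ and $B_{\beta}$ under the scaling $u\mapsto tu$ then gives
\begin{equation*}
h(t):=E_{\lambda}(\kappa(t))=\frac{t^{2}}{2}\bigl(A(u_{\lambda})+\lambda\|u_{\lambda}\|_{2}^{2}\bigr)-\frac{t^{4}}{4}\bigl(B_{\alpha}(u_{\lambda})+\mu_{\beta}B_{\beta}(u_{\lambda})\bigr)=\frac{K}{4}\bigl(2t^{2}-t^{4}\bigr).
\end{equation*}
Hence $h'(t)=Kt(1-t^{2})$, so $h$ is strictly increasing on $[0,1]$ and strictly decreasing on $[1,\infty)$, with unique global maximum $h(1)=K/4=E_{\lambda}(u_{\lambda})$ and $h(t)\to-\infty$ as $t\to+\infty$.

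Next I would choose $T>1$ large enough so that simultaneously $h(T)<-1$ (possible because $h(t)\to-\infty$) and $T^{2}\|u_{\lambda}\|_{2}^{2}>M$. With this choice, (i) is immediate: $\kappa(0)=0$, $E_{\lambda}(\kappa(T))=h(T)<-1$, and $\max_{t\in[0,T]}h(t)=h(1)=E_{\lambda}(u_{\lambda})$. Condition (iii) is also immediate because $\varpi(t)=t^{2}\|u_{\lambda}\|_{2}^{2}$ is strictly increasing and continuous with $\varpi(T)>M$. For (ii), note that $\kappa(1)=u_{\lambda}$, and $\|\kappa(t)-u_{\lambda}\|_{H^{1}}=|t-1|\,\|u_{\lambda}\|_{H^{1}}\geq\delta$ forces $|t-1|\geq\delta/\|u_{\lambda}\|_{H^{1}}>0$, hence $t\neq 1$, and the strict unimodality of $h$ yields $E_{\lambda}(\kappa(t))=h(t)<h(1)=E_{\lambda}(u_{\lambda})$.

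Because the proof collapses to analyzing the explicit quartic $h(t)=\frac{K}{4}(2t^{2}-t^{4})$, there is essentially no obstruction once the Nehari identity is invoked; the only subtlety is the sign of $K$, which is guaranteed by the hypothesis $\lambda>0$. Had we tried instead the $L^{2}$-preserving dilation $u_{s}(x)=s^{N/2}u_{\lambda}(sx)$, condition (iii) would fail, so the straight multiplicative path is the natural choice in this situation.
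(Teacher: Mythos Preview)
Your proof is correct. The paper itself omits the argument, merely stating that it is similar to \cite[Lemma 1.8]{JL5}; your multiplicative path $\kappa(t)=t\,u_{\lambda}$ is precisely the construction used there, and the specific homogeneity of degree four of the Hartree terms $B_{\alpha}$ and $B_{\beta}$ makes the reduction to the explicit quartic $h(t)=\tfrac{K}{4}(2t^{2}-t^{4})$ immediate.
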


\begin{proof}
The proof is similar to \cite[Lemma 1.8]{JL5}, we omit it here.
\end{proof}

\textbf{The proof of Theorem \ref{t4}:} $(i)$ Let $u_{\lambda}$ be nontrivial critical point of $E_{\lambda}$. For a fixed $\delta>0$ and $M:=c>0$, let $\kappa: [0,T]\rightarrow H^{1}(\mathbb{R}^{N})$ be the continuous path defined in Proposition \ref{p3-1}. By Proposition \ref{p3-1}, there is $t_{0}\in(0,T)$ such that $\|\kappa(t_{0})\|_{2}^{2}=c$ and further
\begin{eqnarray*}
 E_{\lambda}(u_{\lambda})=\max_{t\in [0,T]}E_{\lambda}(\kappa(t))&\geq& E_{\lambda}(\kappa(t_{0}))\\
 &=&E(\kappa(t_{0}))+\frac{\lambda}{2}
 \int_{\mathbb{R}^{N}}|\kappa(t_{0})|^{2}dx\\
 &\geq&\sigma(c)+\frac{\lambda c}{2}.
\end{eqnarray*}
$(ii)$ By $(i)$, we know that any least action solution $u_{\lambda}\in H^{1}(\mathbb{R}^{N})$ of (\ref{e1-1}) satisfies
\begin{equation*}\label{e3-21}
E_{\lambda}(u_{\lambda})=m_{\lambda}=\sigma(c)+\frac{\lambda c}{2}.
\end{equation*}
Suppose by contradiction that $\|u_{\lambda}\|_{2}^{2}\neq c$. Then for
$\delta:=|\sqrt{c}-\|u_{\lambda}\|_{2}|>0$ and $ M:=c>0,$
we have the continuous path $\kappa: [0,T]\rightarrow H^{1}(\mathbb{R}^{N})$ defined in Proposition \ref{p3-1}. By Proposition \ref{p3-1} $(iii)$, there is $t_{0}\in (0,T)$ such that
\begin{equation*}
\|\kappa(t_{0})\|_{2}^{2}=c\,\text{ and }\,
\|\kappa(t_{0})-u_{\lambda}\|_{2}\geq\delta.
\end{equation*}
Then we have
\begin{eqnarray*}
E_{\lambda} (u_{\lambda})&>&E_{\lambda}(\kappa(t_{0}))\\
&=&E(\kappa(t_{0}))+\frac{\lambda}{2}
\int_{\mathbb{R}^{N}}|\kappa(t_{0})|^{2}dx\\
&\geq&\sigma(c)+\frac{\lambda c}{2},
\end{eqnarray*}
which  is a contradiction. Thus it is further straightforward to see that $E(u_{\lambda})=\sigma(c)$.

\textbf{Finally, we give the proof of Theorem \ref{t3}:} Similar to Lemma \ref{L5-0}, we obtain
\begin{equation*}
E(\psi (0,x))=E(\psi (t,x))\geq \frac{1}{2}A(\psi (t,x))-\frac{1}{4}\left(
\frac{|\mu _{\beta }|}{2}\right) ^{-\frac{\theta }{1-\theta }}c^{3/2}A(\psi
(t,x))^{1/2}.
\end{equation*}%
By the conservation of energy and mass, we have
\begin{equation*}
\Vert \nabla \psi (t,x)\Vert _{2}^{2}\leq C(E,c)\text{ uniformly in }t.
\end{equation*}%
Then there exists a unique global solution for the Cauchy problem to (\ref%
{e1-0}).

Following the
classical arguments of Cazenave and Lions \cite{CL}. Assume that there exist
an $\varepsilon _{0}>0$, a sequence of initial data $\left\{
u_{n}^{0}\right\} $ and a time sequence $\left\{ t_{n}\right\} \subset
\mathbb{R}^{+}$ such that the unique solution $\psi _{n}$ of the Cauchy
problem to (\ref{e1-0}) with initial data $u_{n}^{0}=\psi _{n}(0,\cdot )$
satisfies
\begin{equation*}
\text{dist}_{H^{1}}(u_{n}^{0},\mathcal{M}_{c})<\frac{1}{n}\text{ and dist}%
_{H^{1}}(\psi _{n}(t_{n},\cdot ),\mathcal{M}_{c})\geq \varepsilon _{0}.
\end{equation*}%
Without loss of generality, we may assume that $\left\{ u_{n}^{0}\right\}
\subset S(c)$. Since $\text{dist}_{H^{1}}(u_{n}^{0},\mathcal{M}%
_{c})\rightarrow 0$ as $n\rightarrow \infty $, the conservation laws of the
energy and mass imply that $\{\psi _{n}(t_{n},\cdot )\}$ is a minimizing
sequence for $\sigma (c)$. Then there exists $v_{0}\in \mathcal{M}_{c}$ such
that $\psi _{n}(t_{n},\cdot )\rightarrow v_{0}$ in $H^{1}(\mathbb{R}^{N})$,
which contradicts with dist$_{H^{1}}(\psi _{n}(t_{n},\cdot ),\mathcal{M}%
_{c})\geq \varepsilon _{0}.$ This completes the proof.

\section{Acknowledgments}

J. Sun was supported by the National Natural Science Foundation of China
(Grant No. 12371174) and Shandong Provincial Natural Science Foundation
(Grant No. ZR2020JQ01).


\begin{thebibliography}{99}
\bibitem{BGH} D. Bhimani, T. Gou, H. Hajaiej, Normalized solutions to
nonlinear Schr\"{o}dinger equations with competing Hartree-type
nonlinearities, Math. Nachr. (2024) 1--38. https://doi.org/10.1002/mana.202200443.

\bibitem{BM} B. Bieganowski, J. Mederski, Normalized ground states of the
nonlinear Schr\"{o}dinger equation with at least mass critical growth, J.
Funct. Anal. 280 (2021) 108989.

\bibitem{BS} T. Bartsch, N. Soave, A natural constraint approach to
normalized solutions of nonlinear Schr\"{o}dinger equations and systems, J.
Funct. Anal. 272 (2017) 4998--5037.

\bibitem{CL} T. Cazenave, P.L. Lions, Orbital stability of standing waves
for some nonlinear Schr\"{o}dinger equations, Comm. Math. Phys. 85 (1982)
549--561.

\bibitem{C} T. Cazenave, Semilinear Schr\"{o}dinger equations, volume 10 of Courant Lecture Notes in Mathematics. NYU, Courant
Institute of Mathematical Sciences, New York; American Mathematical Society, Providence, RI, 2003.

\bibitem{CJL} D. Cao, H. Jia, X. Luo, Standing waves with prescribed mass
for the Schr\"{o}dinger equations with van der Waals type potentials, J.
Differential Equations 276 (2021) 228--263.

\bibitem{CFHM} Y. Cho, M.M. Fall, H. Hajaiej, P.A. Markowich, S. Trabelsi, Orbital stability of standing waves of a class of fractional Schr\"{o}dinger equations with Hartree-type nonlinearity, Anal. Appl. 15 (2017) 699--729.

\bibitem{CJ} S. Cingolani, L. Jeanjean, Stationary waves with prescribed $%
L^{2}$-norm for the planar Schr\"{o}dinger-Poisson system, SIAM J. Math.
Anal. 51 (2019) 3533--3568.

\bibitem{DLP} I.E. Dzyaloshinskii, E.M. Lifshitz, L.P. Pitaevskii, The
general theory of van der Waals' forces, Adv. Phys. 10 (1961) 165--209.

\bibitem{FL} J. Fr\"{o}hlich, E. Lenzmann, Mean-field limit of quantum Bose
gases and nonlinear Hartree equation. In: S\'{e}minaire: \'{E}quations aux D%
\'{e}riv\'{e}es Partielles (2003), S\'{e}min. \'{E}qu. D\'{e}riv.
Partielles, \'{E}cole Polytech., Palaiseau, pp. Exp. No. XIX, 26 (2004).

\bibitem{GY} F. Gao, M. Yang, On the Brezis-Nirenberg type critical problem for nonlinear Choquard equation, Sci.
China Math. 61 (2018) 1219--1242.

\bibitem{G} P. G\'{e}rard, Description du d\'{e}faut de compacit\'{e} de
l'injection de Sobolev, ESAIM Control Optim. Calc. Var. 3 (1998) 213--233.

\bibitem{G1} N. Ghoussoub, Duality and perturbation methods in critical
point theory, Cambridge University Press, Cambridge, UK, 1993.

\bibitem{H1} H. Hajaiej, Schr\"{o}dinger systems arising in nonlinear optics and quantum mechanics.
Part I, Math. Models Methods Appl. Sci. 22 (2012) 1250010.

\bibitem{HPS} H. Hajaiej, E. Pacherie, L. Song, On the number of normalized ground state solutions for a class of elliptic equations with general nonlinearities and potentials, arXiv:2308.14599 (2023).

\bibitem{HS1} H. Hajaiej, L. Song, Strict monotonicity of the global branch of solutions in the $L^{2}$ norm and uniqueness of the normalized ground states for various classes of PDEs: Two general methods with some examples, arXiv:2302.09681 (2023).
    
\bibitem{HS2} H. Hajaiej, L. Song, A general and unified method to prove the uniqueness of ground state solutions and the existence/non-existence, and multiplicity of normalized solutions with applications to various NLS, arXiv:2208.11862v3 (2023).
    
\bibitem{HS3} H. Hajaiej, C.A. Stuart, On the variational approach to the stability of standing waves for the nonlinear Schr\"{o}dinger equation, Adv. Nonlinear Stud. 4 (2004) 469--501.
    
\bibitem{HK} T. Hmidi, S. Keraani, Blowup theory for the critical nonlinear
Schr\"{o}dinger equations revisited, Int. Math. Res. Not. IMRN 46 (2005)
2815--2828.

\bibitem{JL0} L. Jeanjean, T.T. Le, Multiple normalized solutions for a
Sobolev critical Schr\"{o}dinger equation, Math. Ann. 384 (2022) 101--134.

\bibitem{JL4} L. Jeanjean, S. Lu, Normalized solutions with positive
energies for a coercive problem and application to the cubic-quintic
nonlinear Schr\"{o}dinger equation, Math. Models Methods Appl. Sci. 32
(2022) 1557--1588.

\bibitem{JL5} L. Jeanjean, S. Lu, On global minimizers for a mass
constrained problem, Calc. Var. (2022) 61:214.

\bibitem{JL} H. Jia, X. Luo, Prescribed mass standing waves for energy
critical Hartree equations, Calc. Var. (2023) 62:71.

\bibitem{LNR} M. Lewin, P.T. Nam, N. Rougerie, Derivation of Hartrees theory
for generic mean-field Bose systems, Adv. Math. 254 (2014) 570--621.

\bibitem{LY} G. Li, H. Ye, The existence of positive solutions with
prescribed $L^{2}$-norm for nonlinear Choquard equations, J. Math. Phys. 55
(2014) 121501.

\bibitem{L5} E.H. Lieb, Existence and uniqueness of the minimizing solution
of Choquard's nonlinear equation, Stud. Appl. Math. 57 (1976/77) 93--105.

\bibitem{LL} E.H. Lieb, M. Loss, Analysis, Graduate Studies in Mathematics,
Vol. 14, American Mathematical Society, Providence, RI, 2001.

\bibitem{L} P.L. Lions, The concentration-compactness principle in the
calculus of variation. The locally compact case, part I, Ann. Inst. H.
Poincar\'{e} C Anal. Non Lin\'{e}aire 1 (1984) 109--145.

\bibitem{L1} X. Luo, Normalized standing waves for the Hartree equations, J.
Differential Equations 195 (2019) 455--467.

\bibitem{MS0} J. Mederski, J. Schino, Least energy solutions to a
cooperative system of Schr\"{o}dinger equations with prescribed $L^{2}$%
-bounds: at least $L^{2}$-critical growth, Calc. Var. (2022) 61:10.

\bibitem{MS} V. Moroz, J. V. Schaftingen, Groundstates of nonlinear Choquard
equations: existence, qualitative properties and decay asymptotics, J.
Funct. Anal. 265 (2013) 153--184.

\bibitem{P} S. Pekar, Untersuchung \"{u}ber die Elektronentheorie der
Kristalle, Akademie Verlag, Berlin, 1954.

\bibitem{P1} R. Penrose, On gravity's role in quantum state reduction, Gen.
Relativity Gravitation 28 (1996) 581--600.

\bibitem{PD} S.G. Porsev, A. Derevianko, High-accuracy calculations of
dipole, quadrupole, and octupole electric dynamic polarizabilities and van
der Waals coefficients $C_{6}$, $C_{8}$, and $C_{10}$ for alkaline-earth
dimers, J. Exp. Theor. Phys. 102 (2006) 195--205.

\bibitem{S1} N. Soave, Normalized ground states for the NLS equation with
combined nonlinearities, J. Differential Equations 269 (2020) 6941--6987.

\bibitem{S2} N. Soave, Normalized ground States for the NLS equation with
combined nonlinearities: The Sobolev critical case, J. Funct. Anal. 279
(2020) 108610.

\bibitem{S3}  L. Song, H. Hajaiej, Threshold for existence, non-existence and multiplicity of
positive solutions with prescribed mass for an NLS with a pure power nonlinearity
in the exterior of a ball, arXiv:2209.06665 (2022).

\bibitem{S4} L. Song, H. Hajaiej, A new method to prove the existence, non-existence, multiplicity, uniqueness, and orbital stability/instability of standing waves for NLS with partial confinement, 	arXiv:2211.10058 (2022).

\bibitem{T} G. Tarantello, On nonhomogeneous elliptic equations involving
critical Sobolev exponent, Ann. Inst. H. Poincar\'{e} C Anal. Non Lin\'{e}%
aire 9 (1992) 281--304.

\bibitem{TVZ} T. Tao, M. Visan, X. Zhang, The nonlinear Schr\"{o}dinger
equation with combined power-type nonlinearities, Commun. Partial Differential
Equations 32(7-9) (2007) 1281--1343.

\bibitem{WW} J. Wei, Y. Wu, Normalized solutions for Schr\"{o}dinger
equations with critical sobolev exponent and mixed nonlinearities, J. Funct.
Anal. 283 (2022) 109574.

\bibitem{YLT} D.D. Yang, P. Li, K.T. Tang, The ground state van der Waals
potentials of the calcium dimer and calcium rare-gas complexes, J. Chem.
Phys. 131 (2009) 154301.

\bibitem{YCRS} S. Yao, H. Chen, V. R\u{a}dulescu, J. Sun, Normalized
solutions for lower critical Choquard equations with critical Sobolev
perturbation, SIAM J. Math. Anal, 54 (2022) 3696--3723.

\bibitem{YHSW} S. Yao, H. Hajaiej, J. Sun, T.F. Wu, Standing waves for the
NLS equation with competing nonlocal and local nonlinearities: the double $%
L^{2}$-supercritical case, arXiv:2102.10268v2 (2023).

\bibitem{Y} H. Ye, Mass minimizers and concentration for nonlinear Choquard
equations in $\mathbb{R}^{N}$, Topol. Methods Nonlinear Anal. 48 (2016)
393--417.

\bibitem{ZN} Y. Zheng, A. Narayanaswamy, Lifshitz theory of van der Waals
pressure in dissipative media, Phys. Rev. A 83 (2011) 042504.

\bibitem{Z} S. Zhu, On the blow-up solutions for the nonlinear fractional
Schr\"{o}dinger equation, J. Differential Equations 261 (2016) 1506--1531.
\end{thebibliography}
\end{document}